\newtheorem{Theorem}{Theorem}[section]
\newtheorem*{Theorem A}{Theorem A}
\newtheorem*{Conj*}{Conjecture}
\newtheorem{Proposition}[Theorem]{Proposition}
\newtheorem{Lemma}[Theorem]{Lemma}
\newtheorem{Remark-numbered}[Theorem]{Remark}
\newtheorem{Remarks-numbered}[Theorem]{Remarks}
\newtheorem{Claim-numbered}{Claim}
\newcommand{\Sing}{\text{Sing}}
\newcommand{\Rep}{\text{Rep}}
\newcommand{\dmu}{\,{\rm d}\mu}
\newcommand{\norm}[1]{\|#1\|}
\newcommand{\ps}{\phi_s}
\begin{document}
\title{Topological and Metric Pressure for Singular Flows}
\author {Meijie Zhao, Xiao Wen\footnote{X.W. was partially supported by National Key R\&D Program of China (No. 2022YFA1005801).
}
}

\maketitle

\begin{abstract}
In this paper, we introduce the notions of rescaled metric pressure and rescaled topological pressure for flows by considering three types of rescaled Bowen balls, which take  the flow velocity and time reparametrization into account. This approach effectively eliminates the influence of singularities. It is demonstrated that defining both metric pressure and topological pressure via several distinct Bowen balls is equivalent. Furthermore, under the assumptions that $\log \|X(x)\|$ is integrable and that $\mu(\mathrm{Sing}(X))=0$, we prove Katok's formula of pressure. We establish a partial variational principle that relates the rescaled metric pressure and the rescaled topological pressure.\\

{\noindent\bf{Key words:} Reparametrization bowen balls, Rescaled metric pressure, Rescaled topological pressure, Katok's entropy formula, Variational principle.}

\end{abstract}

\section{Introduction}\label{sec:intro}
The notion of pressure in dynamical systems arises from the thermodynamic formalism developed in the 1970s, inspired by ideas from statistical mechanics. The central idea is to extend the concept of topological entropy, introduced by Adler-Konheim-McAndrew \(\cite{adler1965topological}\), by weighting orbit complexity with a potential function. This led to the definition of topological pressure, first systematically studied by Ruelle \(\cite{ruelle1982repellers}\) and Walters \(\cite{walters2000introduction}\), among others. In this setting, pressure generalizes entropy: when the potential is identically zero, one recovers topological entropy.
In parallel, a metric version of pressure was established, combining the classical Kolmogorov-Sinai entropy of an invariant probability measure with the space average of the potential. This formulation appears naturally in the variational principle, which states that the topological pressure equals the supremum of the metric pressures over all invariant measures. The principle, first proved in full generality by Goodman \cite{goodman1971relating} and later developed by Walters \(\cite{walters2000introduction}\), provides the foundation for the study of equilibrium states, i.e. invariant measures that maximize this quantity.
Since then, the pressure formalism has become a fundamental tool in smooth dynamics, symbolic dynamics, and ergodic theory, with wide-ranging applications to equilibrium states, dimension theory, and multifractal analysis.

Pressures of flows can be defined by the pressures of its time 1 map without considering the special characteristics of the flows. Bowen-Ruelle \cite{Bowen1975} first introduced the topological pressure of a flow using the notion of spanning sets. And then He-Yang-Gao \cite{he2004metrical} gave a definition of metric pressure by using the notion of spanning sets and proved the Katok's formula for non-singular flows. A recent work of Wang-Chen \(\cite{WangChen}\) dealt with the pressure of flows by introducing Bowen balls with time reparametrization. These authors also established a Katok formula for the pressure of non-singular flows in the ergodic case with some additional assumptions. In addition, they established a variational principle for the topological pressure of flows.

The idea of adjusting the scale of a neighborhood of a regular point via flow speed for a singular flow dates back to Liao's foundational work on standard equation systems \(\cite{liao1974standard}\cite{liao1996qualitative}\).  Gan-Yang \(\cite{gan2018morse}\) extracted the geometric essence from Liao's work, developing it into a pivotal tool. Using this tool, Wen-Wen \(\cite{wen2019rescaled}\) introduced the notion of rescaled expansiveness and proved its equivalence with classical expansiveness for nonsingular flows.
By considering rescaled
Bowen balls with time reparametrizations, Wang-Wen \(\cite{WANG2024128673}\)  defined three new entropies for singular flows, showed the equivalence of the three new definitions for any
\(C^1\) vector  fields, and referred to them as rescaled metric entropy.

In this paper, we define new metric pressures and topological pressures for flows. The novelty is the introduction of three types of rescaled Bowen balls involving flow velocity and reparametrization of time to define pressure, which eliminate the influence of singularities. The
three new definitions of metric pressure and topological pressure are shown to be equivalent.   Furthermore, we establish the partial variational principle between the rescaled metric pressure and the rescaled topological pressure. When \( \log\|X(x)\| \) is integrable and \(\mu({\rm Sing}(X))=0\), we prove Katok's formula of pressure.
Let us present our results in a precise way.

Let $M$ be a compact metric space and $\phi_t$ be a continuous flow on $M$. Denote by $C(M, \mathbb{R})$ the Banach space of all continuous function on $M$ with the $C^0$ norm $\|\cdot\|$. A Borel probability measure $\mu$ on $M$ is called $\phi$ {\it invariant} if $\mu(\phi_t(A))=\mu(A)$ for every measurable set $A$ and $t\in\mathbb{R}$. $\mu$ is called $\phi$ {\it ergodic }if $\mu(A)=0 \text{ or } 1$ for every $\phi$ invariant set $A$. For \(x \in M\), \(t > 0\), \(\varepsilon > 0\), define
\[
B(x, t, \varepsilon) = \left\{ y \in M : d(\phi_{s}x, \phi_s y) < \varepsilon, \,\forall\, 0 \leq s \leq t \right\},
\]
and \(B(x, t, \varepsilon)\) is called the \((t, \varepsilon)\)-ball centering at \(x\).
For $F\subset M$, we say that $F$ $(t, \varepsilon)$ {\it spans} $M$ if
\[M\subset\bigcup_{x\in F}B(x, t, \varepsilon).\]
{\it Topological pressure} for the flow $\phi_t$ w.r.t a continuous function $f\in C(M, \mathbb{R})$ can be defined as
\[P(\phi, f)=\lim_{\varepsilon\to0}\limsup_{t\to\infty}\frac{1}{t}\log N(t, \varepsilon, f),\]
where
\[N(t, \varepsilon, f)=\inf\left\{\sum_{x\in F}e^{\int_0^tf(\phi_s(x)){\rm d}s} : F (t, \varepsilon)\text{ is a finite set which spans } M \right\}.\]
{\it Metric pressure} of a probability measure $\mu$ and a continuous function $f\in C(M, \mathbb{R})$  w.r.t the flow $\phi_t$ can be defined as
\[P_{\mu}(\phi, f)=\lim_{\varepsilon\to 0}\limsup_{t\to\infty}\frac{1}{t}\log N^\mu(\delta, t, \varepsilon, f),\]
where $\delta\in(0,1)$ and
\[N^\mu(\delta, t, \varepsilon, f)=\inf\left\{\sum_{x\in F}e^{\int_0^tf(\phi_s(x)){\rm d}s} \,:\, F \text{ is finite with }\mu\left(\bigcup_{x\in F}B(x, t, \varepsilon)\right)>1-\delta\right\}.\]
He, Yang, and Gao \cite{he2004metrical} proved that if $\mu$ is ergodic and invariant for $\phi$ and $\phi_t$ has no singularity, then
\[P_{\mu}(\phi, f)=h_\mu(\phi_1)+\int f{\rm d}\mu,\]
where $h_\mu(\phi_1)$ is the metric entropy of $\phi_1$ w.r.t $\mu$.

Now let $M$ be a compact boundaryless Riemannian manifold, and let $\mathcal{X}^1(M)$ denote the set of $C^1$ vector fields on $M$. For any $X \in \mathcal{X}^1(M)$, we denote by $\mathrm{Sing}(X) = \{ x \in M : X(x) = 0 \}$ its singular set. Let $\phi_t$ be the flow generated by $X$. The set of all $\phi$ ergodic invariant Borel probability measures on $M$ is denoted by $\mathcal{E}_X(M)$.

To eliminate the influence of singularities, we define pressures using the rescaled Bowen balls introduced in \(\cite{WANG2024128673}\). Let \(I\) be a closed interval containing the origin. If a continuous function \(\alpha: I \to \mathbb{R} \) is strictly monotonically increasing and satisfies \(\alpha(0) = 0\), then \(\alpha\) is called a \textit{reparametrization function}.  Denote by \(\operatorname{Rep}(I)\) the set of all reparametrization functions  on \(I\). Let $x\in M\setminus{\rm Sing}(X)$, define
\[
B_1^*(x,t,\varepsilon,X) = \left\{ y \in M : d(\ps(x), \ps(y)) < \varepsilon \norm{X(\ps(x))} ,\,\forall\, 0 \leq s \leq t \right\}.
\]
\[
B_2^*(x,t,\varepsilon,X) = \left\{ y \in M : \exists \alpha \in \Rep([0,t]), \text{ s.t. } d(\phi_{\alpha(s)}(x), \ps(y)) < \varepsilon \norm{X(\phi_{\alpha(s)}(x))}, \,\forall\, 0 \leq s \leq t \right\}.
\]
\[
B_3^*(x,t,\varepsilon,X) = \left\{ y \in M : \exists \alpha \in \Rep([0,t]), \text{ s.t. } d(\ps(x), \phi_{\alpha(s)}(y)) < \varepsilon \norm{X(\ps(x))} ,\,\forall\, 0 \leq s \leq t \right\}.
\]
Let \( \mu \) be a Borel probability measure on $M$ with \( \mu(\Sing(X)) = 0 \). For \( f \in C(M, \mathbb{R}) \), define
\[
N_i^\mu(\delta, t, \varepsilon, X, f) = \inf_{\substack{F \in \mathcal{F}_i(t, \varepsilon) }} \sum_{x \in F} e^{\int_0^t f(\phi_s(x)) \, {\rm d}s},
\]
where
\[
\mathcal{F}_i(t, \varepsilon) = \left\{ F \subset M \setminus \text{Sing}(X) : F \text{ is finite and }\mu\left( \bigcup_{x \in F} B_i^*(x, t, \varepsilon, X) \right) > 1 - \delta \right\}.
\]

In our paper, we have the following theorems which characterize the rescaled metric pressure for a $C^1$ vector  field with singularities.

\begin{Theorem}\label{mainthm1}
Let \( X \in \mathcal{X}^1(M) \),  \( \phi_t \) be the flow generated by \( X \) and \( \mu \) be a Borel probability measure with \( \mu(\text{Sing}(X)) = 0 \). For any \( 0 < \delta < 1 \) and \( f \in C(M, \mathbb{R}) \), we have
\[
\lim_{\varepsilon \to 0} \limsup_{t \to \infty} \frac{1}{t} \log N_1^\mu(\delta, t, \varepsilon, X, f) = \lim_{\varepsilon \to 0} \limsup_{t \to \infty} \frac{1}{t} \log N_2^\mu(\delta, t, \varepsilon, X, f) = \lim_{\varepsilon \to 0} \limsup_{t \to \infty} \frac{1}{t} \log N_3^\mu(\delta, t, \varepsilon, X, f),
\]
\[
\lim_{\varepsilon \to 0} \liminf_{t \to \infty} \frac{1}{t} \log N_1^\mu(\delta, t, \varepsilon, X, f) = \lim_{\varepsilon \to 0} \liminf_{t \to \infty} \frac{1}{t} \log N_2^\mu(\delta, t, \varepsilon, X, f) = \lim_{\varepsilon \to 0} \liminf_{t \to \infty} \frac{1}{t} \log N_3^\mu(\delta, t, \varepsilon, X, f).
\]
\end{Theorem}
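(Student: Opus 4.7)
\emph{Plan.} The strategy is to establish matching inequalities between $N_1^\mu$, $N_2^\mu$, $N_3^\mu$. By the symmetry between $B_2^*$ and $B_3^*$ (obtained by swapping the roles of $x$ and $y$ in the rescaled-shadowing condition) it suffices to compare $N_1^\mu$ with $N_2^\mu$, and both the $\limsup$ and $\liminf$ versions follow from the same inequalities.

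\emph{Easy direction ($N_2^\mu\leq N_1^\mu$).} Setting $\alpha(s)=s$ in the definition of $B_2^*$ gives $B_1^*(x,t,\varepsilon,X)\subset B_2^*(x,t,\varepsilon,X)$, hence $\mathcal{F}_1(t,\varepsilon)\subset\mathcal{F}_2(t,\varepsilon)$ and $N_2^\mu(\delta,t,\varepsilon,X,f)\leq N_1^\mu(\delta,t,\varepsilon,X,f)$. Dividing by $t$, taking $\log$, and passing to the limits yields the desired inequality.

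\emph{Hard direction.} For the reverse, I would first reduce to a compact set away from $\Sing(X)$: since $\mu(\Sing(X))=0$, for each $\delta\in(0,1)$ one may choose $\rho>0$ with $\mu(M_\rho)>1-\delta/2$, where $M_\rho=\{x:\norm{X(x)}\geq\rho\}$. The crucial ingredient is a \textbf{rescaled covering lemma}: for every $\varepsilon>0$ there exist $\varepsilon'>\varepsilon$ (with $\varepsilon'\to 0$ as $\varepsilon\to 0$), a constant $C_0=C_0(\varepsilon,\rho)>0$, and a rate $\kappa(\varepsilon)$ with $\kappa(\varepsilon)\to 0$ as $\varepsilon\to 0$, such that for every $x\in M_\rho$ and every $t>0$ the set $B_2^*(x,t,\varepsilon,X)\cap M_\rho$ is covered by at most $C_0\,e^{t\kappa(\varepsilon)}$ rescaled Bowen balls $B_1^*(y_k,t,\varepsilon',X)$ with centers $y_k\in M\setminus\Sing(X)$. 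This is the pressure-setting analogue of the key covering step behind the entropy equivalence of \cite{WANG2024128673}. The potential is then controlled by uniform continuity: since each $y_k$ shadows $x$ in the rescaled sense with some reparametrization $\alpha$, on $M_\rho$ the $C^1$ regularity of $X$ forces $\alpha'(s)$ to be uniformly close to $1$, and a change of variables $u=\alpha(s)$ combined with uniform continuity of $f$ yields
\[
\left|\int_0^t f(\ps(y_k))\,{\rm d}s-\int_0^t f(\ps(x))\,{\rm d}s\right|\leq t\,\sigma(\varepsilon)+O(1),
\]
with $\sigma(\varepsilon)\to 0$ as $\varepsilon\to 0$. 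Summing the weights $e^{\int_0^t f(\ps(\cdot))\,{\rm d}s}$ over the $B_1^*$-cover gives
\[
N_1^\mu(\delta,t,\varepsilon',X,f)\leq C_0\,e^{t(\kappa(\varepsilon)+\sigma(\varepsilon))+O(1)}\,N_2^\mu(\delta/2,t,\varepsilon,X,f).
\]
Taking $\tfrac1t\log$, $t\to\infty$, then $\varepsilon\to 0$, and using the standard $\delta$-independence of the limit (so that $\delta/2$ may be replaced by $\delta$), yields the desired inequality.

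\emph{Main obstacle.} The substance of the argument lies in the rescaled covering lemma, and specifically in arranging that the exponential rate $\kappa(\varepsilon)$ vanishes as $\varepsilon\to 0$. A naive quantization of admissible reparametrizations at precision $\varepsilon'$ on unit-time intervals gives a covering number of order $(O(1/\varepsilon'))^t$, whose exponential rate $\log(1/\varepsilon')$ does \emph{not} tend to zero with $\varepsilon$. The refinement, following the pattern of \cite{WANG2024128673}, is to cover $M_\rho$ by a finite family of flow boxes of transverse size $\varepsilon'$, decompose the orbit segment $\{\ps(x):0\leq s\leq t\}$ into its $O(t)$ transits through these boxes, quantize the reparametrization inside each box (not globally), and absorb the per-box error into the $\varepsilon'$-enlargement of the Bowen ball. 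Together with the uniform $C^1$ control on $\norm{X}$ on $M_\rho$, this produces a covering rate $\kappa(\varepsilon)$ that genuinely vanishes as $\varepsilon\to 0$, at which point the potential-comparison error $\sigma(\varepsilon)$ also vanishes and the argument closes.
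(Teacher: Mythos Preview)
The easy direction and the general shape are right, but there is a real gap in the hard direction. You invoke ``standard $\delta$-independence of the limit'' to pass from $N_2^\mu(\delta/2,\cdot)$ back to $N_2^\mu(\delta,\cdot)$. This is not available here: Theorem~\ref{mainthm1} is asserted for each fixed $\delta$, and independence of the limit from $\delta$ is precisely what Theorem~\ref{mainthm2} establishes, under the extra hypotheses that $\mu$ is ergodic and $\int\log\|X\|\,{\rm d}\mu<\infty$. Without those assumptions your final step does not close. The mismatch is created by the restriction to $M_\rho$, which is unnecessary: the rescaling by $\|X(\phi_s(x))\|$ in the definition of $B_i^*$ is there exactly so that the covering step works uniformly on all of $M\setminus\Sing(X)$. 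The paper's covering lemma (Lemma~\ref{lemma3.3}, from \cite{WANG2024128673}) says that for any $a>0$ and any large $\tau$, there is $\varepsilon>0$ such that each $B_2^*(x,t,\varepsilon,X)$ is covered by at most $3^{\lfloor t/\tau\rfloor}$ balls $B_1^*(y,t,16a,X)$ with centers $y\in B_2^*(x,t,\varepsilon,X)$, for \emph{every} $x\in M\setminus\Sing(X)$. With this the argument (Lemma~\ref{lemma3.4}) runs with the \emph{same} $\delta$ on both sides; the vanishing covering rate is $\frac{\log 3}{\tau}$ and comes from sending $\tau\to\infty$, not from $\varepsilon\to 0$. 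The potential shift is controlled by the bounded-variation estimate Proposition~\ref{boundedvariation}, since the new centers lie in $B_2^*(x,t,\varepsilon,X)$.

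A smaller point: the equality of $N_2^\mu$ and $N_3^\mu$ is not a mere symmetry. In $B_2^*$ the reparametrization acts on $x$ and the scale is $\|X(\phi_{\alpha(s)}(x))\|$; in $B_3^*$ the reparametrization acts on $y$ but the scale is still $\|X(\phi_s(x))\|$, so swapping $x\leftrightarrow y$ does not interchange them. The paper (Lemma~\ref{lemma3.2}) instead uses the mutual containments $B_2^*(x,t,\varepsilon,X)\subset B_3^*(x,(1-\lambda)t,\varepsilon,X)$ and conversely (Lemma~\ref{lemma3.1}, derived from the reparametrization control of Lemma~\ref{controloftimereparametrization}), absorbing the time shift into an error $\lambda\|f\|t$ and then letting $\lambda\to 0$.
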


In \cite{he2004metrical} and \cite{WangChen}, Katok's formula of metric pressure for nonsingular flow has been considered.  The following theorem establishes that for a $C^1$ vector field $X$, if  $\log\|X\|$ is integrable, then Katok's formula for metric pressure defined through rescaled Bowen balls $B_i^*(x, t, \varepsilon, X)$ also holds.

\begin{Theorem}\label{mainthm2}
If \( \mu \) is a $\phi$ ergodic invariant measure with \( \mu(\Sing(X)) = 0 \) and \( \int \log \norm{X(x)} \dmu < \infty \), then we have
\[
\lim_{\varepsilon \to 0} \limsup_{t \to \infty} \frac{1}{t} \log N_i^\mu(\delta, t, \varepsilon, X, f) =\lim_{\varepsilon \to 0} \liminf_{t \to \infty} \frac{1}{t} \log N_i^\mu(\delta, t, \varepsilon, X, f) = h_\mu(\phi_1) + \int f \, {\rm d}\mu
\]
for $i=1,2,3$.
\end{Theorem}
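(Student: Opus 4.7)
By Theorem~\ref{mainthm1}, it suffices to treat $i=1$, since $B_1^*$ involves no time reparametrization and is the easiest to compare with classical Bowen balls. The scheme is to adapt the standard proof of Katok's formula for pressure to the rescaled setting, taking as a black box the following \emph{rescaled Brin--Katok formula}, which is the case $f\equiv 0$ of the theorem and which follows from the methods of \cite{WANG2024128673}: for $\mu$-a.e. $x\in M$,
\begin{equation*}
\lim_{\varepsilon\to 0}\limsup_{t\to\infty}\frac{-\log\mu(B_1^*(x,t,\varepsilon,X))}{t}=\lim_{\varepsilon\to 0}\liminf_{t\to\infty}\frac{-\log\mu(B_1^*(x,t,\varepsilon,X))}{t}=h_\mu(\phi_1).
\end{equation*}
The integrability $\int\log\norm{X}\dmu<\infty$ together with $\mu(\Sing(X))=0$ is precisely what lets one confine $\mu$-typical orbits to a compact set $K\subset M\setminus\Sing(X)$ on which $\norm{X}\in[c,C]$ and transfer the classical Brin--Katok formula for $\phi_1$ to rescaled Bowen balls there.

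Combining this with Birkhoff's theorem for $f$ and invoking Egorov, for each $\gamma>0$ there exist $\varepsilon_1,T_1>0$ and a measurable set $A$ of $\mu$-measure sufficiently close to $1$ (depending on $\delta$) such that for every $x\in A$, $t\ge T_1$, $0<\varepsilon\le\varepsilon_1$:
\begin{equation*}
e^{-t(h_\mu(\phi_1)+\gamma)}\le\mu(B_1^*(x,t,\varepsilon,X))\le e^{-t(h_\mu(\phi_1)-\gamma)},\qquad \left|\tfrac{1}{t}\int_0^t f(\phi_s x)\,ds-\int f\dmu\right|<\gamma.
\end{equation*}
For the \emph{upper bound}, a Vitali-type selection of rescaled Bowen balls with centers in $A$ produces a spanning set of $A$ of cardinality at most $e^{t(h_\mu(\phi_1)+\gamma)}$ (via the lower Brin--Katok bound and the quasi-triangle inequality described next); each center contributes $e^{\int_0^t f(\phi_s x_i)\,ds}\le e^{t(\int f\dmu+\gamma)}$, while the residual mass outside $A$ is covered by finitely many extra balls whose potential is absorbed into a $t$-independent constant. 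This yields $\limsup_{t\to\infty}\tfrac{1}{t}\log N_1^\mu(\delta,t,\varepsilon,X,f)\le h_\mu(\phi_1)+\int f\dmu+2\gamma$.

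For the \emph{lower bound} I first record a quasi-triangle inequality for rescaled Bowen balls: since $X$ is Lipschitz, $y\in B_1^*(x,t,\varepsilon,X)$ with $\varepsilon$ small enough forces $\norm{X(\phi_s x)}\le 2\norm{X(\phi_s y)}$ for all $s\in[0,t]$, whence $B_1^*(x,t,\varepsilon,X)\subset B_1^*(y,t,4\varepsilon,X)$. Given any $F\in\mathcal{F}_1(t,\varepsilon)$, its subfamily $F'\subset F$ of centers whose balls meet $A$ still covers a definite fraction of the mass of $A$ (depending only on $\delta$). For each $x\in F'$ pick $y_x\in A\cap B_1^*(x,t,\varepsilon,X)$; the quasi-triangle inequality and the upper Brin--Katok bound at $y_x$ give $\mu(B_1^*(x,t,\varepsilon,X))\le e^{-t(h_\mu(\phi_1)-\gamma)}$, while continuity of $f$ gives $\int_0^t f(\phi_s x)\,ds\ge\int_0^t f(\phi_s y_x)\,ds-t\omega_f(\varepsilon C)\ge t(\int f\dmu-2\gamma)$ for small $\varepsilon$, where $\omega_f$ is the modulus of continuity of $f$. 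Summing over $F$ yields $\sum_{x\in F}e^{\int_0^t f(\phi_s x)\,ds}\ge c_\delta\,e^{t(h_\mu(\phi_1)+\int f\dmu-3\gamma)}$ with $c_\delta>0$ independent of $t$, and sending $t\to\infty$, then $\varepsilon\to 0$, then $\gamma\to 0$ finishes the proof.

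The chief obstacle is the rescaled Brin--Katok formula itself: the rescaled Bowen balls shrink anisotropically near $\Sing(X)$, so the classical proof does not transfer verbatim; one must splice local Brin--Katok estimates on compact sets where $\norm{X}$ is bounded away from $0$ and $\infty$, using $\int\log\norm{X}\dmu<\infty$ to control orbit excursions near $\Sing(X)$ on exponential scales.
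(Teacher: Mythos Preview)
Your reduction to $i=1$ via Theorem~\ref{mainthm1} is fine, and the quasi-triangle inequality for rescaled Bowen balls together with the Vitali-type engulfing and the Birkhoff/Egorov bookkeeping are all correct. The genuine gap is the black box at the center of the argument: the pointwise \emph{rescaled Brin--Katok formula}
\[
\lim_{\varepsilon\to 0}\lim_{t\to\infty}\frac{-\log\mu(B_1^*(x,t,\varepsilon,X))}{t}=h_\mu(\phi_1)\quad\text{for }\mu\text{-a.e.\ }x.
\]
You call this ``the case $f\equiv 0$ of the theorem,'' but that is a conflation: the $f\equiv 0$ case of Theorem~\ref{mainthm2} is a statement about $N_1^\mu(\delta,t,\varepsilon,X,0)$, i.e.\ Katok's entropy formula for spanning sets, not a pointwise statement about the measure of individual rescaled Bowen balls. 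The reference \cite{WANG2024128673} establishes the former, and Brin--Katok does not follow formally from it; in particular the \emph{upper} Brin--Katok bound $\mu(B_1^*)\le e^{-t(h-\gamma)}$, which drives your lower bound for $N_1^\mu$, is the hard direction and is nowhere supplied. Your final paragraph concedes this is ``the chief obstacle'' but offers only a heuristic about splicing estimates on compacta where $\|X\|$ is bounded; no actual mechanism for the upper bound is given.

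The paper avoids this detour altogether and proves the two inequalities directly from Shannon--McMillan--Breiman. For the upper bound (Lemma~\ref{lemma4.4}) it passes to an ergodic time $\phi_\tau$, uses the integrability of $\log\|X\|$ together with Ma\~n\'e's lemma (Lemma~\ref{lemma4.1}) to build a countable partition $\xi$ with $H_\mu(\xi)<\infty$ and $\mathrm{diam}\,\xi(x)<\eta\|X(x)\|$, so that the cylinders $\xi_n(x)$ sit inside rescaled Bowen balls; SMB then bounds the number of cylinders needed. For the lower bound (Lemma~\ref{lemma4.6}) it takes a finite partition with $\mu(\partial\xi)=0$ and uses Katok's combinatorial lemma on Hamming balls in symbol space (Lemma~\ref{lemma4.5}) to bound how many $\xi_n$-atoms a single rescaled Bowen ball can meet. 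These are precisely the ingredients one would need to prove a rescaled Brin--Katok theorem in this setting, so routing through Brin--Katok buys nothing: if you fill in your black box honestly you will reproduce the paper's argument, plus an extra layer of packaging.
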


By the above theorems, the {\it rescaled metric pressure } $P_\mu^*(X, f)$ for the $C^1$ vector field $X$ and the Borel probability measure $\mu$ and $f\in C(M, \mathbb{R})$ is naturally defined by the following:
\[P_\mu^*(X, f)=\lim_{\delta\to 0}\lim_{\varepsilon \to 0} \limsup_{t \to \infty} \frac{1}{t} \log N_1^\mu(\delta, t, \varepsilon, X, f).\]
Theorem \ref{mainthm2} says that if $\mu$ is a $\phi$ ergodic invariant measure with $\mu({\rm Sing}(X))=0$ and $\log\|X(x)\|$ is integrable, then $P_\mu^*(X, f)=P_\mu(\phi, f)$, and they are all equal to the metric pressure of $\mu, f$ with respect to the time 1 map $\phi_1: M\to M$.

Now we consider the rescaled topological pressure, which is also defined using the rescaled Bowen balls. Let $K\subset M\setminus {\rm Sing}(X)$ be a compact set. Given $t>0$ and $\varepsilon>0$, we say that $F$ is a {\it rescaled i-$(t, \varepsilon, K)$-spanning set} if $F\subset K$ and
$$K\subset\bigcup_{x\in F}B^*_i(x, t, \varepsilon, X).$$
For any given $f\in C(M, \mathbb{R})$ and $i=1,2,3$, let
\[
N_{i,t}^*(X, f, \varepsilon, K) = \inf\left\{ \sum_{x \in F} e^{\int_0^t f(\phi_s x) {\rm d}s} : F \text{ is a finite rescaled i-} (t,\varepsilon,K)\text{-spanning set} \right\},
\]
\[
{P}_i^*(X, f, \varepsilon,K) = \limsup_{t \to \infty} \frac{1}{t} \log N_{i,t}^*(X, f, \varepsilon,K),
\]
and then
\[
{P}_i^*(X, f) = \sup\limits_{K} \lim_{\varepsilon \to 0} {P}_i^*(X, f, \varepsilon,K),
\]
where the supremum is over the compact subsets \( K \subset M\setminus {\rm Sing}(X) \). If $X$ is nonsingular, then by the fact that $\|X(x)\|$ has an upper bound and a positive lower bound, we can see that $P_1^*(X, f)=P(\phi, f)$ for every $f\in C(M, \mathbb{R})$.

Inspired by Franco \(\cite{franco1977flows}\),  who also defined the topological pressure for flows using separating set, we can define the rescaled topological pressure by the notion of rescaled separating sets. Given a compact set \( K \subset M\setminus{\rm Sing}(X) \), \( t>0 \) and \( \varepsilon > 0 \), we say that \( E \) is a {\it rescaled i-\((t, \varepsilon, K) \)-separating set } if \( E \subset K \) and
\[ B_i^*(x, t, \varepsilon, X) \cap E = \{ x \} \]
for all \( x \in E \).  For any given $f\in C(M, \mathbb{R})$, let
\[
Z_{i,t}^*(X, f, \varepsilon,K) = \sup\left\{ \sum_{x \in E} e^{ \int_0^t f(\phi_s x) {\rm d}s} : E \text{ is a rescaled i-}(t,\varepsilon,K)\text{-separating set} \right\},\quad i=1,2,3.
\]
\[
{Q}_i^*(X, f, \varepsilon,K) = \limsup_{t \to \infty} \frac{1}{t} \log Z_{i,t}^*(X, f, \varepsilon,K),\quad i=1,2,3,
\]
and then
\[
{Q}_i^*(X, f) = \sup\limits_{K} \lim_{\varepsilon \to 0} {Q}_i^*(X, f, \varepsilon,K),\quad i=1,2,3,
\]
where the supremum is over the compact subsets \( K \subset M\setminus {\rm Sing}(X) \).

We have the following theorem.

\begin{Theorem}\label{theorem 1.3}
Let \( M \) be a compact boundaryless Riemannian manifold, $X\in\mathcal{X}^1(M)$ and $f\in C(M, \mathbb{R})$, we have
\[
{P}_1^*(X, f) = {P}_2^*(X, f)={P}_3^*(X,  f)={Q}_1^*(X, f) = {Q}_2^*(X, f)={Q}_3^*(X,  f).
\]
\end{Theorem}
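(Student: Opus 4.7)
The approach has three ingredients: trivial containments among the balls $B_1^*, B_2^*, B_3^*$; the standard spanning-vs-separating comparison at fixed index $i$; and a rescaled reparametrization absorption lemma $B_j^* \subseteq B_1^*$ at slightly enlarged scale. The first two are routine; the third is the main obstacle.

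Taking $\alpha(s)=s$ in the definitions of $B_2^*$ and $B_3^*$ gives $B_1^*(x,t,\varepsilon,X) \subseteq B_j^*(x,t,\varepsilon,X)$ for $j=2,3$, so every rescaled $1$-spanning set is $j$-spanning and every $j$-separating set is $1$-separating. Dividing by $t$, taking $\limsup_t$ and then $\varepsilon\to 0$ yields $P_j^* \leq P_1^*$ and $Q_j^* \leq Q_1^*$. For the spanning-vs-separating comparison at fixed $i$, any maximal rescaled $i$-$(t,\varepsilon,K)$-separating set is automatically $i$-$(t,\varepsilon,K)$-spanning, giving $P_i^* \leq Q_i^*$. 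For the reverse $Q_i^* \leq P_i^*$, I fix a rescaled $i$-$(t,3\varepsilon,K)$-separating set $E$ and a rescaled $i$-$(t,\varepsilon,K)$-spanning set $F$. Because $\|X\|$ is globally Lipschitz on $M$, for $\varepsilon$ small the rescaled triangle-type containment $B_i^*(x,t,\varepsilon,X) \subseteq B_i^*(y,t,3\varepsilon,X)$ holds whenever $y \in B_i^*(x,t,\varepsilon,X)$ (the cases $i=2,3$ are obtained by composing the witnessing reparametrizations and are handled analogously), so each $B_i^*(x,t,\varepsilon,X)$ with $x \in F$ contains at most one point of $E$, giving an injection $E \hookrightarrow F$. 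Uniform continuity of $f$ together with the bound $d(\phi_s y, \phi_s x) \leq \varepsilon \sup_M \|X\|$ for matched pairs yields $\left|\int_0^t (f(\phi_s y) - f(\phi_s x))\,ds\right| \leq t \,\omega_f(\varepsilon \sup_M \|X\|)$, whence $Z_{i,t}^*(X,f,3\varepsilon,K) \leq e^{t\omega_f(\varepsilon\sup_M\|X\|)} N_{i,t}^*(X,f,\varepsilon,K)$. Taking $\limsup_t$ and then $\varepsilon \to 0$ gives $Q_i^* \leq P_i^*$.

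Closing the cycle between the different indices requires the reverse rescaled inclusion
\[
B_j^*(x,t,\varepsilon,X) \subseteq B_1^*(x,t,\eta(\varepsilon),X), \quad j=2,3,
\]
with $\eta(\varepsilon) \to 0$ as $\varepsilon \to 0$, depending only on the $C^1$ norm of $X$ and holding uniformly for $x \in M \setminus \Sing(X)$. Given this, every rescaled $j$-$(t,\varepsilon,K)$-spanning set is $1$-$(t,\eta(\varepsilon),K)$-spanning, so $N_{1,t}^*(X,f,\eta(\varepsilon),K) \leq N_{j,t}^*(X,f,\varepsilon,K)$; $\limsup_t$ followed by $\varepsilon \to 0$ gives $P_1^* \leq P_j^*$, and the parallel separating-set argument gives $Q_1^* \leq Q_j^*$. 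Combined with the previous steps, all six quantities coincide. The inclusion itself would be proved by the strategy of Wang-Wen \cite{WANG2024128673}: the rescaling by $\|X\|$ implies that the natural local flow box at a regular point has section radius comparable to $\varepsilon\|X\|$, so the reparametrization $\alpha$ witnessing $y \in B_j^*(x,t,\varepsilon,X)$ satisfies $|\alpha(s) - s| \leq c\varepsilon$ with $c$ uniform on $M \setminus \Sing(X)$; then $d(\phi_s x, \phi_{\alpha(s)}x) \leq c\varepsilon \sup_\xi \|X(\phi_\xi x)\|$, a local Lipschitz comparison converts this into a multiple of $\varepsilon\|X(\phi_s x)\|$, and the triangle inequality completes the bound. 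The main obstacle is precisely this uniform reparametrization control over orbit segments of arbitrary length, whose robustness near $\Sing(X)$ is the whole point of rescaling by $\|X\|$.
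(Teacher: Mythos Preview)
Your argument breaks at the ``reparametrization absorption'' step. The claimed inclusion
\[
B_j^*(x,t,\varepsilon,X)\subseteq B_1^*(x,t,\eta(\varepsilon),X),\qquad j=2,3,
\]
with $\eta(\varepsilon)\to 0$ uniformly in $t$ and $x$, is false, and your sketch for it rests on the assertion that the witnessing reparametrization satisfies $|\alpha(s)-s|\le c\varepsilon$ uniformly in $s\in[0,t]$. The actual control available (Lemma~\ref{controloftimereparametrization}) is only $|\alpha(s)-s|\le\lambda s$ for any prescribed $\lambda>0$ once $\varepsilon$ is small enough: the drift is linear in $s$, not bounded. A simple model shows why no uniform bound can hold: for the flow $\dot\theta=1+h$, $\dot h=0$ on $\mathbb{T}^2$, two nearby orbits at heights $h$ and $h'$ shadow each other forever with the linear reparametrization $\alpha(s)=s(1+h')/(1+h)$, so $y\in B_2^*(x,t,\varepsilon,X)$ for all $t$; but $d(\phi_s x,\phi_s y)$ grows like $s|h-h'|$, so $y\notin B_1^*(x,t,\eta,X)$ once $t$ is large. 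Thus a single $B_1^*$-ball cannot swallow a $B_2^*$-ball at any fixed scale.

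The paper closes the cycle differently. Instead of a containment, Lemma~\ref{lemma3.3} covers $B_2^*(x,t,\varepsilon,X)$ by at most $3^{\lfloor t/\tau\rfloor}$ balls $B_1^*(y,t,16a,X)$ with centers $y$ inside $B_2^*(x,t,\varepsilon,X)$. The exponential multiplicity $3^{t/\tau}$ is harmless for pressure because after dividing by $t$ it contributes only $(\log 3)/\tau$, and $\tau$ is sent to infinity at the end. Controlling the potential along the cover requires Proposition~\ref{boundedvariation} (bounded variation of $\int_0^t f\circ\phi_s\,{\rm d}s$ over $B_2^*$), not just uniform continuity of $f$, precisely because one compares $\int_0^t f(\phi_s y)\,{\rm d}s$ with $\int_0^t f(\phi_s x)\,{\rm d}s$ when only $\phi_{\alpha(s)}x$ and $\phi_s y$ are close. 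This yields $Q_1^*\le P_2^*$ (Lemma~\ref{lemma5.5}); together with the trivial inclusions and the spanning/separating comparisons (Lemmas~\ref{lemma5.2}--\ref{lemma5.4}) and the $B_2^*\leftrightarrow B_3^*$ symmetry via Lemma~\ref{lemma3.1}, the six quantities coincide.

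Two smaller points. First, rescaled balls are not symmetric in center and point (the radius is $\varepsilon\|X(\phi_s x)\|$), so ``maximal $i$-separating $\Rightarrow$ $i$-spanning'' at the \emph{same} $\varepsilon$ fails; you need a halving of $\varepsilon$ (Lemma~\ref{lemma5.2}) for $i=1$ and an additional time contraction $(1-\lambda)t$ for $i=2$ (Lemma~\ref{lemma5.3}), since composing and inverting reparametrizations forces domain adjustments. Second, your injection argument $Q_i^*\le P_i^*$ for $i=2,3$ relies on $B_i^*(x,t,\varepsilon)\subseteq B_i^*(y,t,3\varepsilon)$ whenever $y\in B_i^*(x,t,\varepsilon)$; composing the two witnessing reparametrizations runs into the same domain issue and again needs a $(1-\lambda)t$ correction. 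These are fixable, but the absorption step is not; replace it by the covering-with-multiplicity argument.
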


Thus we can define the {\it rescaled topological pressure} for the $C^1$ vector field $X$ and the continuous function $f\in C(M, \mathbb{R})$ as following
$$P^*(X, f)={P}_1^*(X, f).$$

For the rescaled topological pressure, we have the following partial variational principle.

\begin{Theorem}\label{theorem1.4:vp}
For every \( C^1 \) vector field \( X \) of a compact boundaryless Riemannian manifold $M$, and any continuous function \( f \in C(X, \mathbb{R}) \), one has
\[
\sup\{ {P}_\mu^*(X,f) : \mu \text{ is a Borel probability measure with } \mu(\text{Sing}(X)) = 0 \} \leq {P}^*(X,f).
\]
\end{Theorem}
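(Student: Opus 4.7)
The plan is to compare, for each Borel probability $\mu$ with $\mu(\Sing(X))=0$ and each threshold $\delta \in (0,1)$, the metric quantity $N_1^\mu(\delta, t, \varepsilon, X, f)$ with the topological spanning count $N_{1,t}^*(X, f, \varepsilon, K)$ on a single compact set $K \subset M \setminus \Sing(X)$ carrying most of $\mu$. Since the rescaled topological pressure is defined as a supremum over all such $K$, and since Theorem~\ref{theorem 1.3} identifies all six versions of the topological pressure with $P^*(X,f)$, such a comparison propagates to the desired inequality after passing to the relevant limits.

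Given $\mu$ as above and $\delta \in (0,1)$, the hypothesis $\mu(\Sing(X))=0$ together with inner regularity of $\mu$ on the compact metric space $M$, applied to the open set $M \setminus \Sing(X)$, yields a compact set $K \subset M \setminus \Sing(X)$ with $\mu(K) > 1-\delta$. For any $t>0$ and $\varepsilon>0$, let $F \subset K$ be an arbitrary finite rescaled $1$-$(t,\varepsilon,K)$-spanning set. By definition $K \subset \bigcup_{x \in F} B_1^*(x, t, \varepsilon, X)$, hence
\[
\mu\Bigl(\bigcup_{x \in F} B_1^*(x,t,\varepsilon,X)\Bigr) \ge \mu(K) > 1-\delta,
\]
so $F \in \mathcal{F}_1(t,\varepsilon)$. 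Taking the infimum over all such spanning sets $F$ yields the pointwise bound $N_1^\mu(\delta,t,\varepsilon,X,f) \le N_{1,t}^*(X,f,\varepsilon,K)$.

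Applying $\frac{1}{t}\log$, then $\limsup_{t \to \infty}$, and finally $\lim_{\varepsilon \to 0}$ gives
\[
\lim_{\varepsilon\to 0}\limsup_{t\to\infty}\frac{1}{t}\log N_1^\mu(\delta,t,\varepsilon,X,f) \;\le\; \lim_{\varepsilon\to 0} P_1^*(X,f,\varepsilon,K) \;\le\; P_1^*(X,f) \;=\; P^*(X,f),
\]
where the last equality uses Theorem~\ref{theorem 1.3}. This bound holds for every $\delta \in (0,1)$, with $K$ allowed to depend on $\delta$, which is harmless because the topological side already takes a supremum over compact subsets of $M \setminus \Sing(X)$. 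Letting $\delta \to 0$ therefore yields $P_\mu^*(X,f) \le P^*(X,f)$, and taking the supremum over admissible $\mu$ completes the proof.

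The main obstacle is essentially absorbed into the framework already developed: Theorem~\ref{theorem 1.3} guarantees that the topological pressure is well defined independently of the type of ball used, and the rescaled Bowen balls $B_1^*(x,t,\varepsilon,X)$ employ the \emph{same} scaling $\varepsilon\norm{X(\phi_s(x))}$ on both the metric and topological sides, so a topological $1$-$(t,\varepsilon,K)$-spanning set is automatically a valid competitor in $\mathcal{F}_1(t,\varepsilon)$. The only subtle point is the $\delta$-dependent choice of $K$ obtained from inner regularity, but this introduces no difficulty because the final pressure absorbs it into its supremum over $K$.
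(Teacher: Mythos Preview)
Your proof is correct and follows essentially the same approach as the paper: use inner regularity to find a compact $K\subset M\setminus\Sing(X)$ with $\mu(K)>1-\delta$, observe that any finite rescaled $1$-$(t,\varepsilon,K)$-spanning set lies in $\mathcal{F}_1(t,\varepsilon)$, deduce $N_1^\mu(\delta,t,\varepsilon,X,f)\le N_{1,t}^*(X,f,\varepsilon,K)$, and pass to the limits. The only cosmetic differences are that the paper runs an auxiliary $\sigma>0$ argument where you take $\delta\to0$ directly, and that your appeal to Theorem~\ref{theorem 1.3} is unnecessary since $P^*(X,f)=P_1^*(X,f)$ by definition.
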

The remainder of the paper is organized as follows:
In Section \ref{sec:boundedvariation}, we prove the bounded variation property of $f\in C(M, \mathbb{R})$  for the rescaled balls $B_i^*(x, t, \varepsilon, X)$. From the results of this section it follows that the bounded variation hypothesis in Theorem 1.2 of Wang-Chen \(\cite{WangChen}\) is automatically satisfied in the case of singular flows generated by \(C^1\) vector fields. Actually, we can remove such a bounded variation assumption even in the case of nonsingular flows on compact metric spaces (see Appendix A). In Section \ref{sec:pressure}, we prove Theorem \ref{mainthm1}. In Section \ref{sec:Katokformula}, we establish Katok's formula for the rescaled metric pressure, which reveal the connection between the rescaled metric pressure and the entropy of the flow with singularities. In Section \ref{sec:tp}, we give the proof of Theorem \ref{theorem 1.3} and \ref{theorem1.4:vp}. And we prove a result similar to Theorem \ref{mainthm2} in Appendix which improves the main results in \cite{WangChen}.

\section{Bounded Variation}\label{sec:boundedvariation}

For a continuous function \( f \in C(M, \mathbb{R}) \), \(t>0,\,\varepsilon>0\), define

\[
\gamma_{i, t}(f, \varepsilon) = \sup\limits_{x \in M \setminus \text{Sing}(X)}  \left\{ \left| \int_0^t  \left(f(\phi_s(y)) - f(\phi_s(z))\right) {\rm d}s \,\right| : y,z \in B_i^*(x, t,\varepsilon, X) \right\},\quad i=2,3.
\]
In this section, we will prove the following proposition, and the property what we have in the proposition is called bounded variation in the paper \cite{WangChen}. It will be used in the proof of Theorem \ref{mainthm1} and \ref{theorem 1.3}.

\begin{Proposition} \label{boundedvariation}
For any continuous function $f\in C(M, \mathbb{R})$, we have
\[ \lim\limits_{\varepsilon \to 0} \lim\limits_{t \to \infty} \frac{\gamma_{i, t}(f, \varepsilon) }{t} = 0, \quad i = 2, 3. \]

\end{Proposition}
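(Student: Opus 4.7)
I focus on $i=2$; the case $i=3$ is obtained by exchanging the roles of the base point and the test point in the reparametrization condition. The plan combines uniform continuity of $f$ with a short-time comparison argument showing that any reparametrization witnessing the rescaled Bowen-ball condition must be nearly the identity, the latter being possible precisely because of the $\|X\|$-rescaling built into $B_2^*$.

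By uniform continuity of $f$ on the compact manifold $M$, given $\eta > 0$ there exists $\delta > 0$ with $d(p,q) < \delta \Rightarrow |f(p) - f(q)| < \eta$. Since $\|X\|_\infty := \sup_M \|X\|$ is finite, for $\varepsilon < \delta/\|X\|_\infty$ the inequality $d(p,q) < \varepsilon \|X(r)\|$ (any $r$) implies $|f(p) - f(q)| < \eta$. For $y, z \in B_2^*(x, t, \varepsilon, X)$ with witnessing reparametrizations $\alpha_y, \alpha_z \in \mathrm{Rep}([0, t])$, applying this pointwise in $s$ and integrating gives
\[
\left|\int_0^t\bigl(f(\phi_s y) - f(\phi_s z)\bigr)\,{\rm d}s\right| \le 2\eta t + \left|\int_0^t\bigl(f(\phi_{\alpha_y(s)} x) - f(\phi_{\alpha_z(s)} x)\bigr)\,{\rm d}s\right|,
\]
reducing the problem to controlling the residual integral along the orbit of the reference point $x$.

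\textbf{Key step.} The central claim is that for $\varepsilon$ small, every reparametrization $\alpha$ witnessing $B_2^*(x, t, \varepsilon, X)$ is $(1 \pm C\varepsilon)$-bi-Lipschitz on $[0, t]$, with $C = C(X)$ depending only on the flow. The argument is a short-time comparison on sub-intervals $[s_1, s_2]$ of bounded length: the $C^1$-regularity of $X$ and the Gr\"onwall bound on $\phi_t$ yield $d(\phi_{s_2} y, \phi_{s_2-s_1+\alpha(s_1)}(x)) \le C'\varepsilon \|X(\phi_{\alpha(s_1)} x)\|$, while simultaneously $d(\phi_{s_2} y, \phi_{\alpha(s_2)}(x)) < \varepsilon \|X(\phi_{\alpha(s_2)} x)\| \approx \varepsilon \|X(\phi_{\alpha(s_1)} x)\|$. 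Triangulating the two approximations of $\phi_{s_2} y$ gives a spatial bound $d(\phi_{\alpha(s_2)}(x), \phi_{s_2-s_1+\alpha(s_1)}(x)) \le C''\varepsilon \|X(\phi_{\alpha(s_1)} x)\|$; converting this to a flow-time difference along the orbit of $x$, the factor $\|X\|$ \emph{cancels}, yielding $|\alpha(s_2) - \alpha(s_1) - (s_2 - s_1)| \le C\varepsilon$ independently of the local flow speed. This cancellation is precisely the design feature of the rescaled Bowen balls and makes the bound robust even as the orbit approaches $\mathrm{Sing}(X)$. Summing over a subdivision of $[0, t]$ into sub-intervals of fixed length yields the global bi-Lipschitz bound, and in particular $|\alpha_y(t) - t|, |\alpha_z(t) - t| \le C\varepsilon t$.

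\textbf{Conclusion and obstacle.} With the bi-Lipschitz control in hand, the change of variables $u = \alpha(s)$ (valid since $\alpha$ is now absolutely continuous) yields $\left|\int_0^t f(\phi_{\alpha(s)} x)\,{\rm d}s - \int_0^{\alpha(t)} f(\phi_u x)\,{\rm d}u\right| \le C\varepsilon \|f\|_\infty t$, and $|\alpha_y(t) - \alpha_z(t)| \le 2C\varepsilon t$ gives $\left|\int_0^{\alpha_y(t)} f(\phi_u x)\,{\rm d}u - \int_0^{\alpha_z(t)} f(\phi_u x)\,{\rm d}u\right| \le 2C\varepsilon \|f\|_\infty t$. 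Combining with the opening reduction, $\gamma_{2, t}(f, \varepsilon) \le (2\eta + 4C\varepsilon\|f\|_\infty) t$, so $\limsup_{t\to\infty} \gamma_{2, t}(f, \varepsilon)/t \le 2\eta + 4C\varepsilon\|f\|_\infty$; sending $\varepsilon \to 0$ and then $\eta \to 0$ closes the proof, and the case $i = 3$ is entirely symmetric. The hardest point, and the main obstacle, is the key step's cancellation argument: one must carefully track the Lipschitz constants of both $X$ and the time-$h$ flow $\phi_h$ on a neighborhood of the orbit of $x$ in order to produce a constant $C$ that is uniform in $x$ and $t$, and to justify the short-time comparison cleanly even when the orbit of $x$ spends long stretches near the singular set (where the ambient speed degenerates but the rescaled ball shrinks in tandem).
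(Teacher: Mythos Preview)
Your approach follows the same line as the paper's: reduce via uniform continuity of $f$ to controlling how far the witnessing reparametrization $\alpha$ is from the identity, then exploit the $\|X\|$-rescaling to get a uniform bound on that deviation. The paper packages your ``key step'' as Lemma~\ref{controloftimereparametrization} (quoted from \cite{WANG2024128673}), which states precisely that for any $\lambda>0$ there is $\varepsilon_0>0$ so that the Bowen-ball condition forces $|\alpha(s)-s|\le\lambda s$ (for $s\ge b$) and, locally, $|\alpha(kr)-\alpha((k-1)r)-r|$ is small on each step of a length-$r$ grid.

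There is, however, a genuine gap in your conclusion step. What your short-time comparison actually yields is the \emph{additive} bound $|\alpha(s_2)-\alpha(s_1)-(s_2-s_1)|\le C\varepsilon$ on intervals of length $\le h$; summing gives $|\alpha(t)-t|\le (C/h)\varepsilon t$ but \emph{not} a Lipschitz (multiplicative) bound on arbitrarily short sub-intervals. In particular your inference ``valid since $\alpha$ is now absolutely continuous'' is unjustified: a strictly increasing continuous $\alpha\in\mathrm{Rep}([0,t])$ need not be absolutely continuous, and the additive control does not repair this. Without absolute continuity the substitution $u=\alpha(s)$ cannot be used to obtain $\bigl|\int_0^t f(\phi_{\alpha(s)}x)\,{\rm d}s-\int_0^{\alpha(t)} f(\phi_u x)\,{\rm d}u\bigr|\le C\varepsilon\|f\|_\infty t$ as you claim.

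The paper circumvents this by never invoking a change of variables. Instead it partitions $[0,t]$ into $n=\lfloor t/r\rfloor$ pieces of length $r$, applies the Mean Value Theorem for integrals on each piece (both for $\int_{(k-1)r}^{kr} f(\phi_{\alpha(s)}x)\,{\rm d}s$ and for $\int_{\alpha((k-1)r)}^{\alpha(kr)} f(\phi_u x)\,{\rm d}u$), and then uses exactly the additive local control $|\alpha(kr)-\alpha((k-1)r)-r|\le\frac{\sigma}{8(\|f\|+1)}r$ together with $|\alpha(s_k)-\alpha(s_k')|<4r$ to compare the two Riemann sums term by term. This needs only continuity and monotonicity of $\alpha$ plus the additive increment bound, which is precisely what your key step provides. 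Your sketch becomes correct if you replace the change-of-variables passage by this MVT-on-sub-intervals argument.
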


To prove the above proposition, we prepare a relative uniform flowbox theorem which was firstly proved by Wen-Wen \cite{wen2019rescaled}. For any \(x\in M\setminus \Sing(X)\) and \(r>0,\) set
\[
N_x = \{ v \in T_x M : v \perp X(x) \},\ \ \ \
U_x(r \| X(x) \|) = \left\{ v + t X(x) : v \in N_x,\| v \| \leq r \| X(x) \|, |t|\leq r  \right\}.
\]
The following lemma is Proposition 2.2 of \cite{wen2019rescaled}.
\begin{Lemma}\label{flowbox}
For a \( C^1 \) vector field \( X \) on \( M \), there exists \( r_0 > 0 \) such that for any \( x \in M \setminus \text{Sing}(X) \), there exists a \( C^1 \) embedding
\[
F_x: U_x(r_0\| X(x) \|) \to M ,\,F_x(v + t X(x)) = \phi_t\bigl(\exp_x(v)\bigr),
\]
which satisfies
 \(\| D_pF_x \| \leq 3, m(D_pF_x) \geq \frac{1}{3}, \forall\, p \in U_x(r_0\| X(x) \|), x\in M\setminus{\rm Sing}(X)\).
\end{Lemma}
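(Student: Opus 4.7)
The plan is to establish both the existence of $F_x$ as a $C^1$ embedding and the uniform derivative bounds by a direct computation of $D_p F_x$ combined with standard Gronwall and Jacobi-field estimates. The rescaling of the domain $U_x(r_0\|X(x)\|)$ by $\|X(x)\|$ in the normal direction is precisely what makes all error bounds uniform in $x$ up to the singular set.

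First I would compute the derivative explicitly. Writing a generic tangent vector at $p=v+tX(x)$ as $w+sX(x)$ with $w\in N_x$ and $s\in\mathbb{R}$, differentiating the defining formula $F_x(v+tX(x))=\phi_t(\exp_x(v))$ yields
\[
D_p F_x(w+sX(x)) \;=\; s\,X\!\bigl(\phi_t(\exp_x(v))\bigr) \;+\; D\phi_t\big|_{\exp_x(v)}\bigl(D\exp_x\big|_v(w)\bigr).
\]
At $p=0$ this reduces to the identity $w+sX(x)\mapsto w+sX(x)$, so the goal becomes showing that for small enough $r_0$, $D_p F_x$ differs from the identity, after parallel-transport identification of $T_xM$ with $T_{F_x(p)}M$, by $O(r_0)$ in operator norm, uniformly in $x\in M\setminus\mathrm{Sing}(X)$.

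Second I would collect three uniform estimates, with $L:=\|DX\|_{C^0(M)}<\infty$ and $K$ a geometric constant from $M$: (a) Gronwall applied to the variational equation gives $\|D\phi_t|_y-\mathrm{Id}\|\le e^{L|t|}-1\le C r_0$ for all $y\in M$ and $|t|\le r_0$; (b) Jacobi-field comparison gives $\|D\exp_x|_v-\mathrm{Id}\|\le K\|v\|$ for $\|v\|$ less than the injectivity radius of $M$, which applies since $\|v\|\le r_0\|X(x)\|\le r_0\max_M\|X\|$; (c) Lipschitz continuity of $X$ combined with (a) yields $\|X(\phi_t(\exp_x(v)))-P\cdot X(x)\|\le C'r_0\|X(x)\|$, where $P$ is parallel transport along the short path from $x$ to $\phi_t(\exp_x(v))$. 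All constants depend only on $\|X\|_{C^1}$ and the Riemannian geometry of $M$. Substituting (a)--(c) into the expression for $D_p F_x$ and noting that $\|w+sX(x)\|$ dominates both $\|w\|$ and $|s|\|X(x)\|$, one obtains $\|D_p F_x-\mathrm{Id}\|=O(r_0)$ uniformly in $x$, from which $\|D_p F_x\|\le 3$ and $m(D_p F_x)\ge 1/3$ follow by choosing $r_0$ sufficiently small.

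Finally, to upgrade $F_x$ to an embedding I would work in normal coordinates at $x$: the composition $\exp_x^{-1}\circ F_x$ maps the convex set $U_x(r_0\|X(x)\|)\subset T_xM$ into $T_xM$ with derivative still close to the identity, hence is injective by the elementary shearing argument (if $\|DG-\mathrm{Id}\|\le 1/2$ on a convex domain then $\|G(p_1)-G(p_2)\|\ge \tfrac12\|p_1-p_2\|$). The main obstacle is ensuring genuine uniformity of the constants as $x$ approaches $\mathrm{Sing}(X)$: without the $\|X(x)\|$-rescaling in $U_x$, estimate (b) would fail at singularities where $\|v\|$ could remain of order one while $\|X(x)\|$ is arbitrarily small; with the rescaling, both $\|v\|/\|X(x)\|$ and $|t|$ stay bounded by $r_0$, with constants depending only on global quantities, so the near-identity behavior of $D_p F_x$ persists uniformly in $x\in M\setminus\mathrm{Sing}(X)$.
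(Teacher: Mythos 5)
The paper does not prove this lemma; it is quoted verbatim as Proposition~2.2 of Wen--Wen~\cite{wen2019rescaled}, so there is no in-paper proof to compare against. Your argument is nonetheless the standard (and essentially the cited) one: differentiate $F_x$ explicitly, compare to parallel transport using Gronwall on the variational equation, Jacobi-field bounds on $D\exp_x$, and Lipschitzness of $X$, then observe that $\|w+sX(x)\|^2=\|w\|^2+s^2\|X(x)\|^2$ lets the $\|X(x)\|$-rescaling absorb every error term into an $O(r_0)$ deviation from an isometry, uniformly in $x$. The one step you should make explicit is inside item (c): to bound $d\bigl(\phi_t(\exp_x v),x\bigr)$ by $Cr_0\|X(x)\|$ you need the orbit-velocity estimate $\|X(\phi_s(y))\|\le e^{L|s|}\|X(y)\|$ (Gronwall applied to $\tfrac{d}{ds}X(\phi_s(y))=DX\cdot X$) together with $\|X(\exp_x v)\|\le (1+L r_0)\|X(x)\|$; appealing to (a), which only controls $D\phi_t$, does not directly give this distance bound. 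With that small repair the proof is complete.
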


By the compactness of $M$,  for any flow $\phi_t$ generated by a \(C^1\)  vector field $X$ on  $M$, there always exists a (Lipschitz) constant $L > 0$ such that for any  \( t \in \mathbb{R} \) and \( x, y \in M \), one has
\[
d\bigl(\phi_t(x), \phi_t(y)\bigr) \leq e^{L |t|} d(x, y).
\]
Here we point out that the constant \( r_0 \) in Lemma \ref{flowbox} only depends on the constant \( L \).

The following lemma is Lemma 4 of \cite{WANG2024128673}.
\begin{Lemma}\label{controloftimereparametrization}
Let \( X \) be a \( C^1 \) vector field on \( M \), and \( \phi_t \) be the flow generated by \( X \). For any \( \lambda > 0 \) and \( 0 < b < r_0 \), there exists \( \varepsilon > 0 \) such that for any \( x, y \in M \setminus \text{Sing}(X) \) and \( T \in [b, +\infty) \), if
\[
d\bigl(\phi_t(x), \phi_{\alpha(t)}(y)\bigr) \leq \varepsilon \| X\bigl(\phi_t(x)\bigr) \|,\quad \forall\, t \in [0, T]
\]
or
\[
d\bigl(\phi_{\alpha(t)}(x), \phi_t(y)\bigr) \leq \varepsilon \| X\bigl(\phi_{\alpha(t)}(x)\bigr) \|,\quad \forall\, t \in [0, T],
\]
then
\[
|\alpha(t) - t| \leq \begin{cases}
\lambda t \, , & b \leq t \leq T; \\
\lambda b,  & 0 \leq t \leq b.
\end{cases}
\]
\end{Lemma}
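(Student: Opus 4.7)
The plan is to reduce the estimate to a one-block flowbox comparison and iterate it. Fix $\lambda>0$ and $0<b<r_0$, pick an integer $N\geq 2$, and set the block length $b'=b/N<r_0/2$. By Lemma~\ref{flowbox} the flowbox $F_x:U_x(r_0\|X(x)\|)\to M$ has distortion bounded by $3$, so one can shrink $\varepsilon$ uniformly in $x\in M\setminus\Sing(X)$ to guarantee that the ball $B(\phi_s(x),\varepsilon\|X(\phi_s(x))\|)$ lies inside $F_x(U_x(r_0\|X(x)\|))$ for every $s\in[0,b']$. The uniformity uses that $\|X\|$ varies along orbits at a rate $\|X(\phi_s(x))\|\leq e^{Ls}\|X(x)\|$ for some $L$ depending only on $X$, which tunes the scaling $\varepsilon\|X\|$ to the intrinsic transversal size $r_0\|X(x)\|$ of the flowbox.

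The one-block estimate and iteration (first hypothesis). Since $F_x$ intertwines $\phi_\tau$ with the translation $v+sX(x)\mapsto v+(s+\tau)X(x)$, writing $y=F_x(v_0+s_0X(x))$ yields $\phi_{\alpha(t)}(y)=F_x(v_0+(s_0+\alpha(t))X(x))$ for every $t\in[0,b']$. Comparing to $\phi_t(x)=F_x(tX(x))$ and using $m(DF_x)\geq 1/3$, the hypothesis at $t=0$ forces $|s_0|\leq 3\varepsilon$, and at $t=\ell\in[0,b']$ forces $|s_0+\alpha(\ell)-\ell|\leq 3\varepsilon\|X(\phi_\ell x)\|/\|X(x)\|\leq 3\varepsilon e^{Lb'}$, so that
\[ |\alpha(\ell)-\ell|\;\leq\; C_1\varepsilon \quad\text{for every } \ell\in[0,b'],\quad C_1:=3(1+e^{Lb'}). \]
Applied on each full block $[kb',(k+1)b']$ after shifting $x,y,\alpha$ to $\phi_{kb'}x,\phi_{\alpha(kb')}y,s\mapsto\alpha(kb'+s)-\alpha(kb')$ (the rescaling constant in the hypothesis is preserved by the shift) and then to the final partial block $[nb',t]$ with $n=\lfloor t/b'\rfloor$, telescoping yields
\[ |\alpha(t)-t|\;\leq\; (n+1)C_1\varepsilon. \]
For $t\geq b$ one has $n+1\leq 2Nt/b$, so this is $\leq 2NC_1\varepsilon t/b\leq\lambda t$ provided $\varepsilon\leq\lambda b/(2NC_1)$; for $t\in[0,b]$ one has $n+1\leq N+1$, so it is $\leq(N+1)C_1\varepsilon\leq\lambda b$ under a slightly smaller $\varepsilon$.

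The second hypothesis $d(\phi_{\alpha(t)}x,\phi_t y)\leq\varepsilon\|X(\phi_{\alpha(t)}x)\|$ reduces to the first via the change of variable $u=\alpha(t)$, $\beta=\alpha^{-1}$, which rewrites it as $d(\phi_u x,\phi_{\beta(u)}y)\leq\varepsilon\|X(\phi_u x)\|$ on $[0,\alpha(T)]$; applying the first case to $\beta$ with a slightly smaller $\lambda'$ and inverting recovers the bound on $\alpha$. The main obstacle is the bootstrap underlying the one-block estimate, namely the a priori verification that $\phi_{\alpha(t)}(y)$ remains inside $F_x(U_x(r_0\|X(x)\|))$ throughout $[0,b']$; this is exactly where the scaling $\varepsilon\|X\|$ in the hypothesis is tailored to the transversal size of the flowbox, and is what permits $\varepsilon$ to be chosen uniformly in $x$ even as $x$ approaches $\Sing(X)$.
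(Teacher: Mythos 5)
The paper does not prove this lemma itself: it cites Lemma~4 of Wang--Wen \cite{WANG2024128673} and merely records that the additional bound $|\alpha(t)-t|\leq\lambda b$ for $0\leq t\leq b$ is extractable from the proof there. Your flowbox-based block decomposition with a telescoping estimate is the natural route and, given the role of Lemma~\ref{flowbox} (from \cite{wen2019rescaled}) in that cited argument, it is almost certainly the same mechanism. So I treat your proposal as an independent reconstruction and assess its correctness.

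The skeleton is sound: the one-block inequality $|\alpha(\ell)-\ell|\leq C_1\varepsilon$ for $\ell\in[0,b']$, the shift $\bar x=\phi_{kb'}(x)$, $\bar y=\phi_{\alpha(kb')}(y)$, $\bar\alpha(s)=\alpha(kb'+s)-\alpha(kb')$ which exactly preserves the rescaled hypothesis (note $d(\phi_s(\bar x),\phi_{\bar\alpha(s)}(\bar y))\leq\varepsilon\|X(\phi_s(\bar x))\|$), the telescoping bound $(n+1)C_1\varepsilon$ with $n=\lfloor t/b'\rfloor$, and the final choice of $\varepsilon$ are all consistent and yield both branches of the conclusion. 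The $\beta=\alpha^{-1}$ reduction for the second hypothesis also goes through; your ``slightly smaller $\lambda'$'' is the factor $\lambda'/(1-\lambda')\leq\lambda$, and the case $\alpha(T)<b$ is harmless because the one-block/telescoping estimate gives $|\beta(u)-u|\leq(N+1)C_1\varepsilon$ for all $u$ in that range regardless of whether $\alpha(T)\geq b$.

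The one place I would press you is precisely the point you flag at the end. Your one-block step uses $m(D_pF_x)\geq 1/3$ to convert the hypothesis $d(\phi_t(x),\phi_{\alpha(t)}(y))\leq\varepsilon\|X(\phi_t(x))\|$ into the domain estimate $\|tX(x)-(v_0+(s_0+\alpha(t))X(x))\|\leq 3\varepsilon\|X(\phi_t(x))\|$. A lower bound on the conorm only gives $\|p-q\|\leq 3\,d(F_x(p),F_x(q))$ once one knows the minimizing path from $F_x(p)$ to $F_x(q)$ stays inside $F_x(U_x(r_0\|X(x)\|))$ (or, alternatively, that $B(\phi_t(x),\varepsilon\|X(\phi_t(x))\|)$ lies in the image so that $F_x^{-1}$ is available with $\|DF_x^{-1}\|\leq 3$ along the path). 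That containment is exactly the a priori information the argument is trying to establish, so the step needs a genuine open-closed bootstrap: set $T^*=\sup\{t\in[0,b']: \phi_{\alpha(s)}(y)\in F_x(U_x(r_0\|X(x)\|))\ \forall s\in[0,t]\}$, show $T^*>0$ from the $t=0$ hypothesis with $\varepsilon$ small, derive $|s_0+\alpha(t)-t|\leq 3\varepsilon e^{Lb'}$ on $[0,T^*)$, conclude $|s_0+\alpha(T^*)|\leq b'+3\varepsilon e^{Lb'}<r_0$ for $\varepsilon<(r_0-b')/(3e^{Lb'})$, and deduce $T^*=b'$ by continuity. You gesture at this in your last paragraph but do not carry it out; since it is the only genuinely non-formal step in the whole argument, I would not leave it implicit. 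Once that is written down, the proposal is complete and matches the mechanism the paper relies on from \cite{WANG2024128673}.
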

\noindent

We comment here that although the part \( |\alpha(t) - t| \leq \lambda b \) , \( 0 \leq t \leq b \) in the conclusion is not contained in the original statement of Lemma 4 in \cite{WANG2024128673}, but we can conclude it directly from the proof there.

No we can give the proof of Proposition \ref{boundedvariation}.

\bigskip

\noindent{\bf Proof of Proposition \ref{boundedvariation}.}
Let an arbitrary $f\in C(M, \mathbb{R})$ be given. To prove the result of Proposition \ref{boundedvariation}, it suffices to prove that for any \(\sigma\in(0, 1)\), there exist \(\varepsilon_0 > 0\) and \(T > 0\) such that
for any \(x \in M \setminus \text{Sing}(X)\), \(0 < \varepsilon \leq \varepsilon_0\), \(t > T\) and any \(y,z \in B_{i}^*(x,t,\varepsilon,X)\), we have
\[
\frac{1}{t} \left| \int_0^t ( f(\phi_s(y)) - f(\phi_s(z))){\rm d}s \right| < \sigma.
\]

Let \(\| f \| = \sup\limits_{x \in M} |f(x)|\) be the $C^0$ norm of $f$. For any given \(\sigma > 0\), by the uniform continuity of \(f\), we can choose \(\eta > 0\) such that for any \(x,y \in M\), if \(d(y,z) < \eta  \sup\limits_{x \in M} \| X(x) \|\), then
\[
|f(y) - f(z)| < \frac{\sigma}{8(\| f \| + 1)}.
\]

By Lemma \ref{flowbox}, we can choose \(0 < r < {\eta}/{12}\), such that for any \(x \in M \setminus \text{Sing}(X)\) and \(y,z \in \phi_{[-2r,2r]}(x)\), one has \(d(y,z) < \eta \| X(x) \|\). Furthermore, by Lemma \ref{controloftimereparametrization}, we can choose \(0 < \varepsilon_0 < \eta\) such that for any \(x,y \in M \setminus \text{Sing}(X)\) and \(\alpha \in \text{Rep}[0,t]\) with \(t \geq r\), if
\[
d\bigl( \phi_{\alpha(s)}(x), \phi_s(y) \bigr) < \varepsilon_0 \| X(\phi_{\alpha(s)}(x)) \|, \quad \forall s \in [0,t]
\]
or
\[
d\bigl(\phi_{s}(x), \phi_{\alpha(s)}(y)\bigr) < \varepsilon_0 \| X\bigl(\phi_{s}(x)\bigr) \|,\quad \forall\, s \in [0, t],
\]
then
\[
\left|\alpha(s) - s\right| <
\begin{cases}
\dfrac{\sigma}{8(\|f\| + 1)}  s, & r \leq s \leq t; \\[10pt]
r, &  0 \leq s \leq r.
\end{cases}
\]

Now we assume that \(0 < \varepsilon \leq \varepsilon_0\), \(t \geq r\), \(x \in M \setminus \text{Sing}(X)\) and \(y,z \in B_{2}^*(x,t,\varepsilon,X)\). Denote by \(n = \left\lfloor \frac{t}{r} \right\rfloor\).
One can easily check that
\[
\left| \int_0^t f(\phi_s(y)) \, {\rm d}s - \int_0^t f(\phi_s(z)) \, {\rm d}s - \left( \int_0^{nr} f(\phi_s(y)) \, {\rm d}s - \int_0^{nr} f(\phi_s(z)) \, {\rm d}s \right) \right| \\
\leq 2 \|f \| (t - nr) < 2 \| f\| r.
\]

In the following, we have an estimation on
\[
\left| \int_0^{nr} f(\phi_s(y)) \, {\rm d}s - \int_0^{nr} f(\phi_s(z)) \, {\rm d}s \right|.
\]
\noindent
Since \( y \in B_2^*(x,t,\varepsilon,X) \), there exists \( \alpha \in \text{Rep}[0,t] \) such that
\[
d\bigl( \phi_{\alpha(s)}(x), \phi_s(y) \bigr) < \varepsilon \| X(\phi_{\alpha(s)}(x)) \|, \quad \forall s\in [0,t].
\]
\noindent
Since \( \varepsilon \leq \varepsilon_0 < \eta \) and by the choice of \( \eta \), we have

\[
\left| f(\phi_{\alpha(s)}(x)) - f(\phi_s(y)) \right| < \frac{\sigma}{8(\| f \| + 1)}, \quad \forall s \in [0,t].
\]
Thus
\[
\left| \int_0^{nr} f(\phi_{\alpha(s)}(x)) \, {\rm d}s - \int_0^{nr} f(\phi_s(y)) \, {\rm d}s \right| < \frac{\sigma}{8(\| f \| + 1)} \cdot nr \leq \frac{\sigma}{8(\| f \| + 1)} t.
\]
Note that
\begin{align*}
&\left| \int_0^{nr}f(\phi_{\alpha(s)}(x)) \, {\rm d}s - \int_0^{nr} f(\phi_s(x)) \, {\rm d}s \right| \\
\leq &\left| \int_0^{nr} f(\phi_{\alpha(s)}(x)) \, {\rm d}s - \int_0^{\alpha(nr)} f(\phi_s(x)) \, {\rm d}s \right| + \left| \int_0^{\alpha(nr)} f(\phi_s(x)) \, {\rm d}s - \int_0^{nr} f(\phi_s(x)) \, {\rm d}s \right|.
\end{align*}
It is easy to see that
\[
\left| \int_0^{\alpha(nr)} f(\phi_s(x)) \, {\rm d}s - \int_0^{nr} f(\phi_s(x)) \, {\rm d}s \right| \leq \| f \| \cdot |\alpha(nr) - nr| < \| f\| \cdot \frac{\sigma}{8(\| f\| + 1)} nr < \frac{\sigma}{8} t.
\]
On the other hand, by the Mean Value Theorem for Integrals, we have
\begin{align*}
&\left| \int_0^{nr} f(\phi_{\alpha(s)}(x)) \, {\rm d}s - \int_0^{\alpha(nr)} f(\phi_s(x)) \, {\rm d}s \right| \\
=& \left| \displaystyle\sum_{k=1}^n \bigl(\int_{(k-1)r}^{kr} f(\phi_{\alpha(s)}(x)) \, {\rm d}s - \int_{\alpha((k-1)r)}^{\alpha(kr)} f(\phi_s(x)) \, {\rm d}s \bigr) \right| \\
=& \left| \sum_{k=1}^n \left( f(\phi_{\alpha(s_k)}(x)) \cdot r - f(\phi_{\alpha(s_k')}(x)) \cdot \bigl( \alpha(kr) - \alpha((k-1)r) \bigr) \right) \right|,
\end{align*}
where \(s_k, s_k' \in [(k-1)r, kr].\)

\bigskip

\noindent
$\textbf{Claim: }
\emph{For any } 1 \leq k \leq n, \emph{ we have }
|\alpha(s_k) - \alpha(s_k')| < 4r,\, |\alpha(kr) - \alpha((k-1)r) - r| < \frac{\sigma}{8(\| f \| + 1)} r.$

\noindent
\begin{proof} Given $ 1 \leq k \leq n$, let $\bar{x} = \phi_{\alpha((k-1)r)}(x)$, $\bar{y} = \phi_{(k-1)r}(y)$ and $\bar{\alpha}(t) = \alpha(t + (k-1)r) - \alpha((k-1)r)$. Then $\bar{\alpha} \in \text{Rep}[0, t - (k-1)r]$  and for any $0 \leq s \leq t - (k-1)r$, we have
\[
d\bigl( \phi_{\bar{\alpha}(s)}(\bar{x}), \phi_s(\bar{y}) \bigr) = d\bigl( \phi_{\alpha(s + (k-1)r)}(x), \phi_{s + (k-1)r}(y) \bigr) < \varepsilon \| X(\phi_{\alpha(s + (k-1)r)}(x)) \|.
\]
\noindent
Since  $0 < \varepsilon \leq \varepsilon_0$, by Lemma \ref{controloftimereparametrization} we have
\begin{align*}
|\alpha(s_k) - \alpha(s_k')| &= |\bar{\alpha}(s_k - (k-1)r) - \bar{\alpha}(s_k' - (k-1)r)| \\
&\leq \left| \bar{\alpha}(s_k - (k-1)r) - (s_k - (k-1)r) \right| + |s_k - s_k'| + \left| \bar{\alpha}(s_k' - (k-1)r) - (s_k' - (k-1)r) \right| \\
&< r + r + r < 4r,
\end{align*}
and
\[
\left| \alpha(kr) - \alpha((k-1)r) - r \right| = \left| \bar{\alpha}(r) - r \right| < \frac{\sigma}{8(\| f \| + 1)} r.
\]
This finishes the proof of the claim.
\hfill \qedsymbol

\bigskip

By the fact that $|\alpha(s_k)-\alpha(s_k')|< 4r$, there is $t_0\in\mathbb{R}$ such that $\phi_{\alpha(s_k)}(x), \phi_{\alpha(s_k')}(x)\in \phi_{[-2r, 2r]}(\phi_{t_0}(x))$, hence by the choice of $r$ we have $d(\phi_{\alpha(s_k)},\phi_{\alpha(s_k')})<\eta\|X(\phi_{t_0}(x))\|$, and then by the choice of $\eta$, we have
\[
\left|  f(\phi_{\alpha(s_k)}(x)) - f(\phi_{\alpha(s'_{k})}(x))  \right| < \frac{\sigma}{8(\|f||+1)}.
\]
Thus
\begin{align*}
&\left| \int_0^{nr} f\bigl( \phi_{\alpha(s)}(x) \bigr) {\rm d}s - \int_0^{\alpha(nr)} f\bigl( \phi_s(x) \bigr) {\rm d}s \right| \\
= \displaystyle &\sum_{k=1}^n \left| f\bigl( \phi_{\alpha(s_k)}(x) \bigr) \cdot r -f\bigl( \phi_{\alpha(s_k')}(x) \bigr) \bigl( \alpha(kr) - \alpha\bigl((k - 1)r\bigr) \bigr) \right| \\
\leq &\displaystyle\sum_{k=1}^n \left| f\bigl( \phi_{\alpha(s_k)}(x) \bigr) - f\bigl( \phi_{\alpha(s_k')}(x) \bigr) \right| \cdot r + \displaystyle\sum_{k=1}^n \left| f\bigl( \phi_{\alpha(s_k')}(x) \bigr) \right| \cdot \left| \alpha(kr) - \alpha\bigl((k - 1)r\bigr) - r \right| \\
<& \frac{\sigma}{8(\| f \| + 1)}  nr + \| f\|  \frac{\sigma}{8(\| f \| + 1)}  nr=\frac{\sigma}{8} nr \leq \frac{\sigma}{8} t.
\end{align*}

\noindent
Then we have
\begin{align*}
&\left| \int_0^{nr} f(\phi_{\alpha(s)}(x)) \, {\rm d}s - \int_0^{nr} f(\phi_s(x)) \, {\rm d}s \right| \\
\leq &\left| \int_0^{nr} f(\phi_{\alpha(s)}(x)) \, {\rm d}s - \int_0^{\alpha(nr)} f(\phi_s(x)) \, {\rm d}s \right| + \left| \int_0^{\alpha(nr)} f(\phi_s(x)) \, {\rm d}s - \int_0^{nr} f(\phi_s(x)) \, {\rm d}s \right| \\
< &\frac{\sigma}{8} t + \frac{\sigma}{8} t = \frac{\sigma}{4} t.
\end{align*}
Thus
\begin{align*}
&\left| \int_0^{nr} f(\phi_s(y)) \, {\rm d}s - \int_0^{nr} f(\phi_s(x)) \, dx \right| \\
\leq &\left| \int_0^{nr} f(\phi_s(y)) \, {\rm d}s - \int_0^{nr} f(\phi_{\alpha(s)}(x)) \, {\rm d}s \right| + \left| \int_0^{nr} f(\phi_{\alpha(s)}(x)) \, {\rm d}s - \int_0^{nr} f(\phi_s(x)) \, {\rm d}s \right| \\
< &\frac{\sigma}{8(\| f \| + 1)} t + \frac{\sigma}{4} t < \frac{3\sigma}{8} t.
\end{align*}
Similarly, we can prove that
\[
\left| \int_0^{nr} f(\phi_s(z)) \, {\rm d}s - \int_0^{nr} f(\phi_s(x)) \, {\rm d}s \right| < \frac{3\sigma}{8} t.
\]
In conclusion, we get that
\begin{align*}
&\left| \int_0^{nr} f(\phi_s(y)) \, {\rm d}s - \int_0^{nr} f(\phi_s(z)) \, {\rm d}s \right| \\
\leq &\left| \int_0^{nr} f(\phi_s(y)) \, {\rm d}s - \int_0^{nr} f(\phi_s(x)) \, {\rm d}s \right| + \left| \int_0^{nr} f(\phi_s(z)) \, {\rm d}s - \int_0^{nr} f(\phi_s(x)) \, {\rm d}s \right| \\
<& \frac{3\sigma}{8} t + \frac{3\sigma}{8} t = \frac{3\sigma}{4} t.
\end{align*}

\noindent
Thus
\[
\left| \int_0^t f(\phi_s(y)) \, {\rm d}s - \int_0^t f(\phi_s(z)) \, {\rm d}s \right|
< \frac{3\sigma}{4} t + 2 \| f \| r.
\]

$\text{Choose } T > r \text{ such that } \frac{\sigma}{4} \cdot T \geq 2 \| f \| r. \text{ Then for any } t > T, \text{one has}$
\[
\left| \int_0^t f(\phi_s(y)) \, {\rm d}s - \int_0^t f(\phi_s(z)) \, {\rm d}s \right| < \sigma t,
\]
or
\[
\frac{1}{t} \left| \int_0^t f(\phi_s(y)) \, {\rm d}s - \int_0^t f(\phi_s(z)) \, {\rm d}s \right| < \sigma.
\]
This proves that for any \(0 < \varepsilon \leq \varepsilon_0\), \(t > T\) and for any \(x \in M \setminus \text{Sing}(X)\) , \(y,z \in B_{2}^*(x,t,\varepsilon,X)\),
\[
\frac{1}{t} \left| \int_0^t \left( f(\phi_s(y)) - f(\phi_s(z)) \right) {\rm d}s \right| < \sigma,
\]

\noindent
i.e.
\[
\lim\limits_{\varepsilon \to 0} \lim_{t \to \infty} \frac{\gamma_{2,t}(f,\varepsilon)}{t} = 0.
\]

Now we're going to prove that
\[
\lim\limits_{\varepsilon \to 0} \lim\limits_{t \to \infty} \frac{\gamma_{3,t}f, \varepsilon)}{t} = 0.
\]
We still choose the previous constants \( \varepsilon_0 \) and \( r \).
Then for any \( t > r \),  \( x \in M \setminus \text{Sing}(X) \) and
\( y, z \in B_3^*(x, t, \varepsilon, X) \), we still take a similar estimation on
\[
\left| \int_0^t f(\phi_s(y)) \, {\rm d}s - \int_0^t f(\phi_s(z)) \, {\rm d}s \right|.
\]
Denote \( n = \left\lfloor \frac{t}{r} \right\rfloor \). Then we still have
\[
\left| \int_0^t f(\phi_s(y)) \, {\rm d}s - \int_0^t f(\phi_s(z)) \, {\rm d}s - \left( \int_0^{nr} f(\phi_s(y)) \, {\rm d}s - \int_0^{nr} f(\phi_s(z)) \, {\rm d}s \right) \right| \leq 2 \| f \| (t - nr) \leq 2 \| f \| r.
\]
Thus
\[
\left| \int_0^t f(\phi_s(y)) \, {\rm d}s - \int_0^t f(\phi_s(z)) \, {\rm d}s \right| \\
\leq \left|   \int_0^{nr} \left(f(\phi_s(y))-f(\phi_s(z))\right) \, {\rm d}s \ \right| + 2 \| f \|  r.
\]
Similar to the discussion of \( \gamma_{2,t}(f, \varepsilon) \), we have
\[\left| \int_0^{nr} \left(f(\phi_s(y)) -f(\phi_s(z)) \right)\, {\rm d}s \right|\leq \left| \int_0^{nr}\left( f(\phi_s(y)) -f(\phi_s(x)) \right)\, {\rm d}s \right| + \left| \int_0^{nr}\left( f(\phi_s(x))-f(\phi_s(z))\right) \, {\rm d}s \right|.
\]
Since $y\in B_3^*(x, t, \varepsilon, X)$, there is $\beta\in {\rm Rep}[0, t]$ such that $d(\phi_s(x), \phi_{\beta(s)}(y))<\varepsilon\|X(\phi_s(x))\|,\, \forall s\in[0,t]$.  We have
\[
\left| \int_0^{nr} \left(f(\phi_s(y))-f(\phi_s(x))\right) \, {\rm d}s \right| \leq \left| \int_0^{nr} \left(f(\phi_s(y)) -f(\phi_{\beta(s)}(y))\right) \, {\rm d}s \right| + \left| \int_0^{nr} \left(f(\phi_{\beta(s)}(y))-f(\phi_s(x))\right) \, {\rm d}s \right|.
\]
Since $d(\phi_{\beta(s)}(y), \phi_s(x))<\varepsilon_0\|X(\phi_s(x))\|<\eta\|X(\phi_s(x))\|$, we have
\[
\left| \int_0^{nr} \left(f(\phi_{\beta(s)}(y))-f(\phi_s(x))\right) \, {\rm d}s \right| < \frac{\sigma}{8(\| f \| + 1)} nr \leq \frac{\sigma}{8(\| f \| + 1)}  t,
\]
by the choice of $\eta$. On the other hand, we have
\begin{align*}
    &\left| \int_0^{nr} \left(f(\phi_s(y))-f(\phi_{\beta(s)}(y))\right) \, {\rm d}s \right| \\
\leq &\left| \int_0^{\beta(nr)} f(\phi_s(y)) \, {\rm d}s - \int_0^{nr} f(\phi_{\beta(s)}(y)) \, {\rm d}s \right| + \| f \|  |nr - \beta(nr)| \\
< &\left| \int_0^{\beta(nr)} f(\phi_s(y)) \, {\rm d}s - \int_0^{nr} f(\phi_{\beta(s)}(y)) \, {\rm d}s \right| + \| f \|  \frac{\sigma}{8(\| f \| + 1)} nr.
\end{align*}
Similarly, we can choose \( s_k, s_k' \in [(k-1)r, kr] \), \(\forall \,k = 1, \dots, n \) such that
\begin{align*}
  &\left| \int_0^{\beta(nr)} f(\phi_s(y)) \, {\rm d}s - \int_0^{nr} f(\phi_{\beta(s)}(y)) \, {\rm d}s \right| \\
= &\left| \sum_{k=1}^n  f(\phi_{\beta(s'_{k})}(y))\cdot (\beta(kr)-\beta((k-1)r))-\sum_{k=1}^n f(\phi_{\beta(s_k)}(y))\cdot r \right|,
\end{align*}
and prove that
\[
|\beta{(s_k)} - \beta{(s_k')}| < 4r, \quad \left| \beta(kr) - \beta((k-1)r) - r \right| < \frac{\sigma}{8(\| f \| + 1)} r.
\]

\noindent
Thus
\[
\left| \int_0^{\beta(nr)} f(\phi_s(y)) \, {\rm d}s - \int_0^{nr} f(\phi_{\beta(s)}(y)) \, {\rm d}s \right| < \frac{\sigma}{8} t.
\]

\noindent
\text{Then}
\[
\left| \int_0^{nr} f(\phi_s(y)) \, {\rm d}s - \int_0^{nr} f(\phi_{\beta(s)}(y)) \, {\rm d}s \right| <\|f\||\beta(nr)-nr|+ \frac{\sigma}{8} t<\frac{\sigma}{4}t.
\]
Consequently we have
\[
\left| \int_0^{nr} f(\phi_s(y)) \, {\rm d}s - \int_0^{nr} f(\phi_s(x)) \, {\rm d}s \right| < \frac{\sigma}{8(\| f \| + 1)} t + \frac{\sigma}{4} t < \frac{3\sigma}{8} t.
\]
Similarly, we have
\[
\left| \int_0^{nr} f(\phi_s(z)) \, {\rm d}s - \int_0^{nr} f(\phi_s(x)) \, {\rm d}s \right| < \frac{3}{8} \sigma t,
\]
and then
\[
\left| \int_0^{t} f(\phi_s(y)) \, {\rm d}s - \int_0^t f(\phi_s(z)) \, {\rm d}s \right| < \frac{3}{4} \sigma t + 2 \| f \| r.
\]
Similarly, for any $t > T$  where $T$ has been chosen as above, we have
\[\left| \int_0^t f(\phi_s(y)) \, {\rm d}s - \int_0^t f(\phi_s(z)) \, {\rm d}s \right| < \sigma t. \]
This proves that
\[
\lim\limits_{\varepsilon \to 0} \lim_{t \to \infty} \frac{\gamma_{3,t}(f,\varepsilon)}{t} = 0,
\]
and finishes the proof of Proposition \ref{boundedvariation}.
\end{proof}

\section{Rescaled Metric Pressure}\label{sec:pressure}
In this section, we prove Theorem \ref{mainthm1}, which guarantees that regardless of which Bowen ball $B_i^*(x, t, \varepsilon, X)$ we use, the resulting metric pressures are identical.

Here we recall an existing result to prove Theorem \ref{mainthm1}. The following lemma, derived from Lemma \ref{controloftimereparametrization}, is Lemma 5 in \cite{WANG2024128673}.

\begin{Lemma}\label{lemma3.1}
Let \( X \) be a \( C^1 \) vector field on \( M \), and let \( \phi_t(x) \) be the flow generated by \( X \). For any given \( \lambda > 0 \) and \( b \in (0, r_0) \), there always exists \( \varepsilon_0 > 0 \) such that for any \( x \in M \setminus \text{\rm Sing}(X) \), \( \varepsilon \in (0, \varepsilon_0] \) and any $t>b$, we have
\[
B_3^*(x, t, \varepsilon, X) \subseteq B_2^*(x, (1 - \lambda)t, \varepsilon, X),
\]
\[
B_2^*(x, t, \varepsilon, X) \subseteq B_3^*(x, (1 - \lambda)t, \varepsilon, X).
\]
\end{Lemma}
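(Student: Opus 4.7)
The plan is to exploit the fact that the definitions of $B_2^*$ and $B_3^*$ differ only in whether the reparametrization is applied to the $x$-orbit or to the $y$-orbit, so each inclusion should reduce to inverting a single reparametrization and restricting its domain. Lemma \ref{controloftimereparametrization} supplies both the required $\varepsilon_0$ and the a priori estimate $|\alpha(s)-s|\le \lambda s$ for $b\le s\le t$, which in particular forces $\alpha(t)\ge (1-\lambda)t$, so the inverse is defined on a long enough interval.

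To prove $B_3^*(x,t,\varepsilon,X)\subseteq B_2^*(x,(1-\lambda)t,\varepsilon,X)$, I would pick $y$ in the left-hand side with witness $\alpha\in\Rep([0,t])$ satisfying $d(\phi_s(x),\phi_{\alpha(s)}(y))<\varepsilon\|X(\phi_s(x))\|$. This is exactly the first form of the hypothesis of Lemma \ref{controloftimereparametrization}, so the distortion bound above applies. The change of variable $u=\alpha(s)$ rewrites the inequality as $d(\phi_{\alpha^{-1}(u)}(x),\phi_u(y))<\varepsilon\|X(\phi_{\alpha^{-1}(u)}(x))\|$ for $u\in[0,\alpha(t)]$, and since $\alpha^{-1}$ is strictly increasing, continuous, and vanishes at $0$, its restriction to $[0,(1-\lambda)t]\subseteq[0,\alpha(t)]$ is a reparametrization witnessing $y\in B_2^*(x,(1-\lambda)t,\varepsilon,X)$. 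The reverse inclusion is entirely symmetric: the $B_2^*$ defining inequality $d(\phi_{\alpha(s)}(x),\phi_s(y))<\varepsilon\|X(\phi_{\alpha(s)}(x))\|$ matches the second form of Lemma \ref{controloftimereparametrization}, yielding $\alpha(t)\ge (1-\lambda)t$ again, and the same substitution $u=\alpha(s)$ converts the $B_2^*$ condition into the $B_3^*$ condition with reparametrization $\alpha^{-1}$ on $[0,(1-\lambda)t]$.

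The only point requiring care, and the most likely place to trip up, is checking that the rescaling factor $\|X(\cdot)\|$ on the right-hand side transforms correctly under substitution. In both definitions the norm is evaluated along the $x$-orbit, and the substitution carries $\phi_s(x)$ to $\phi_{\alpha^{-1}(u)}(x)$ in one direction and $\phi_{\alpha(s)}(x)$ to $\phi_u(x)$ in the other, which is precisely what the target definition demands; no comparison between $\|X(\phi_s(x))\|$ and $\|X(\phi_{\alpha(s)}(x))\|$ is ever needed. With this observation in hand, the whole argument is bookkeeping around a single change of variable and one application of Lemma \ref{controloftimereparametrization}.
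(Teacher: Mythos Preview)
Your argument is correct and is precisely the derivation the paper has in mind: it cites this statement as Lemma~5 of \cite{WANG2024128673} and notes that it is ``derived from Lemma~\ref{controloftimereparametrization},'' which is exactly the single application plus inversion-and-restriction of the reparametrization that you carry out. The tracking of the $\|X(\cdot)\|$ factor along the $x$-orbit under the substitution $u=\alpha(s)$ is handled correctly, and the only implicit point---that $y$ is automatically regular once $\varepsilon_0$ is small (so Lemma~\ref{controloftimereparametrization} applies)---is harmless.
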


The following lemma is a part of Theorem \ref{mainthm1}.

\begin{Lemma}\label{lemma3.2}
Let $\mu$ be a Borel probability measure with $\mu(\text{Sing}(X)) = 0$ and $\delta \in (0, 1)$, $f \in C(M, \mathbb{R})$ be given. We have
\[
\lim_{\varepsilon \to 0} \limsup_{t \to \infty} \frac{1}{t} \log N_2^\mu(\delta, t, \varepsilon, X, f) = \lim_{\varepsilon \to 0} \limsup_{t \to \infty} \frac{1}{t} \log N_3^\mu(\delta, t, \varepsilon, X, f),
\]
\[
\lim_{\varepsilon \to 0} \liminf_{t \to \infty} \frac{1}{t} \log N_2^\mu(\delta, t, \varepsilon, X, f) = \lim_{\varepsilon \to 0} \liminf_{t \to \infty} \frac{1}{t} \log N_3^\mu(\delta, t, \varepsilon, X, f).
\]
\end{Lemma}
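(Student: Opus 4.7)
The plan is to deduce Lemma \ref{lemma3.2} directly from the containment relations between $B_2^*$ and $B_3^*$ supplied by Lemma \ref{lemma3.1}, combined with the trivial observation that the Birkhoff integrals $\int_0^s f(\phi_u x)\,{\rm d}u$ over intervals of length $s$ and $t$ differ by at most $|t-s|\cdot\|f\|$. No appeal to Proposition \ref{boundedvariation} is needed here, because we are comparing two reparametrization-based rescaled Bowen balls and only the lengths of the time intervals (not the orbit parametrizations) change.

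Fix $\lambda\in(0,1)$ and $b\in(0,r_0)$, and apply Lemma \ref{lemma3.1} to obtain $\varepsilon_0>0$ such that for every $\varepsilon\in(0,\varepsilon_0]$, every $t>b$, and every $x\in M\setminus\text{Sing}(X)$, both inclusions $B_3^*(x,t,\varepsilon,X)\subseteq B_2^*(x,(1-\lambda)t,\varepsilon,X)$ and $B_2^*(x,t,\varepsilon,X)\subseteq B_3^*(x,(1-\lambda)t,\varepsilon,X)$ hold. The first inclusion shows that every $F\in\mathcal{F}_3(t,\varepsilon)$ lies in $\mathcal{F}_2((1-\lambda)t,\varepsilon)$, and the elementary estimate above yields
\[
e^{\int_0^{(1-\lambda)t} f(\phi_s x)\,{\rm d}s}\leq e^{\lambda t\|f\|}\,e^{\int_0^t f(\phi_s x)\,{\rm d}s}.
\]
Summing over $F$ and taking the infimum, we obtain, for all $\varepsilon\le\varepsilon_0$ and $t>b$,
\[
N_2^\mu(\delta,(1-\lambda)t,\varepsilon,X,f)\leq e^{\lambda t\|f\|}\,N_3^\mu(\delta,t,\varepsilon,X,f).
\]

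Taking logarithms, dividing by $(1-\lambda)t$, substituting $T=(1-\lambda)t$, and passing to $\limsup_{T\to\infty}$ and then to $\lim_{\varepsilon\to 0}$, we arrive at
\[
\lim_{\varepsilon\to 0}\limsup_{T\to\infty}\frac{1}{T}\log N_2^\mu(\delta,T,\varepsilon,X,f)\leq \frac{\lambda\|f\|}{1-\lambda}+\frac{1}{1-\lambda}\lim_{\varepsilon\to 0}\limsup_{t\to\infty}\frac{1}{t}\log N_3^\mu(\delta,t,\varepsilon,X,f).
\]
Since $\lambda\in(0,1)$ was arbitrary, letting $\lambda\to 0^+$ yields one direction of the limsup equality. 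Exchanging the roles of $B_2^*$ and $B_3^*$ and using the second inclusion in Lemma \ref{lemma3.1} gives the reverse inequality, and repeating the argument with $\liminf$ in place of $\limsup$ gives the liminf statement. The proof is essentially a bookkeeping exercise and I do not foresee a substantive obstacle; the only point that requires care is that the threshold $\varepsilon_0$ supplied by Lemma \ref{lemma3.1} depends on $\lambda$, so the limits in $\varepsilon$ and in $\lambda$ must be performed in the indicated order.
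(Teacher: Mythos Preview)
Your proposal is correct and follows essentially the same approach as the paper: both use the mutual inclusions from Lemma \ref{lemma3.1} to obtain $\mathcal{F}_i(t,\varepsilon)\subset\mathcal{F}_j((1-\lambda)t,\varepsilon)$, combine this with the elementary bound $\bigl|\int_0^t f(\phi_s x)\,{\rm d}s-\int_0^{(1-\lambda)t} f(\phi_s x)\,{\rm d}s\bigr|\le\lambda t\|f\|$, and then take limits in the order $t\to\infty$, $\varepsilon\to 0$, $\lambda\to 0$. The only cosmetic difference is that you start from the inclusion $B_3^*\subseteq B_2^*$ to get $\limsup N_2\le\limsup N_3$ first, whereas the paper starts from $B_2^*\subseteq B_3^*$ to get the reverse inequality first.
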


\begin{proof}
Recall that \[
N_i^\mu(\delta, t, \varepsilon, X, f) = \inf_{\substack{F \in \mathcal{F}_i(t, \varepsilon) }} \sum_{x \in F} e^{\int_0^t f(\phi_s(x)) \, {\rm d}s},
\]
where
\[
\mathcal{F}_i(t, \varepsilon) = \left\{ F \subset M \setminus \text{Sing}(X) \,\big|\, F \text{ is finite and }\mu\left( \bigcup_{x \in F} B_i^*(x, t, \varepsilon, X) \right) > 1 - \delta \right\}.
\]
By Lemma \ref{lemma3.1}, we can see that for any given $\lambda > 0$, if $\varepsilon>0$ is chosen sufficiently small and $t$ is chosen sufficiently large, then we have $\mathcal{F}_2(t, \varepsilon)\subset \mathcal{F}_3((1 - \lambda)t, \varepsilon)$. Thus

\begin{align*}
    N_2^\mu(\delta, t, \varepsilon, X, f) &= \inf_{F \in \mathcal{F}_2(t, \varepsilon)} \sum_{x \in F} e^{\int_0^t f(\phi_s(x)) \, {\rm d}s}\geq \inf_{F \in \mathcal{F}_3((1 - \lambda)t, \varepsilon)} \sum_{x \in F} e^{\int_0^{t} f(\phi_s(x)) \, {\rm d}s} \\
    &\geq e^{-\lambda \|f\| t} \inf_{F \in \mathcal{F}_3((1 - \lambda)t, \varepsilon)}\sum_{x \in F} e^{\int_0^{(1 - \lambda)t} f(\phi_s(x)) \, {\rm d}s}\\
    &=e^{-\lambda \|f\| t}N_3^\mu(\delta, (1 - \lambda)t, \varepsilon, X, f).
\end{align*}

\noindent
Thus
\[
\frac{1}{t} \log N_2^\mu(\delta, t, \varepsilon, X, f) \geq -\lambda \|f\| + \frac{1}{t} \log N_3^\mu(\delta, (1 - \lambda)t, \varepsilon, X, f).
\]

\noindent
Then once $\varepsilon > 0$ is chosen sufficiently small, we have
\begin{align*}
\limsup_{t \to \infty} \frac{1}{t} \log N_2^\mu(\delta, t, \varepsilon, X, f)&
\geq -\lambda \|f\| +(1-\lambda) \limsup_{t \to \infty} \frac{1}{(1 - \lambda)t} \log N_3^\mu(\delta, (1 - \lambda)t, \varepsilon, X, f)\\
&=-\lambda \|f\| +(1-\lambda)\limsup_{t \to \infty} \frac{1}{t} \log N_3^\mu(\delta, t, \varepsilon, X, f).
\end{align*}

\noindent
Letting $\varepsilon \to 0$, we obtain
\[
\lim_{\varepsilon \to 0} \limsup_{t \to \infty} \frac{1}{t} \log N_2^\mu(\delta, t, \varepsilon, X, f) \\
\geq-\lambda \|f\|+
(1-\lambda)\lim_{\varepsilon \to 0} \limsup_{t \to \infty} \frac{1}{t} \log N_3^\mu(\delta,t, \varepsilon, X, f).
\]

\noindent
 Letting $\lambda \to 0$, we have
\[
\lim_{\varepsilon \to 0} \limsup_{t \to \infty} \frac{1}{t} \log N_2^\mu(\delta, t, \varepsilon, X, f) \geq \lim_{\varepsilon \to 0} \limsup_{t \to \infty} \frac{1}{t} \log N_3^\mu(\delta, t, \varepsilon, X, f).
\]

Similarly, from Lemma \ref{lemma3.1} we can also get that for any given $\lambda > 0$, one has $\mathcal{F}_3(t, \varepsilon)\subset \mathcal{F}_2((1 - \lambda)t, \varepsilon)$ for sufficiently small $\varepsilon>0$ and sufficiently large $t$, then we can prove that
\[
\lim_{\varepsilon \to 0} \limsup_{t \to \infty} \frac{1}{t} \log N_2^\mu(\delta, t, \varepsilon, X, f) \leq \lim_{\varepsilon \to 0} \limsup_{t \to \infty} \frac{1}{t} \log N_3^\mu(\delta, t, \varepsilon, X, f).
\]
Similar results for $\liminf$ can be proved similarly. The proof of the lemma is completed.
\end{proof}

By the Lipschitz continuity of a $C^1$ vector field $X\in\mathcal{X}^1(M)$, there is a constant $c>0$ such that for any $x,y\in M\setminus{\rm Sing}(X)$, $d(x,y)<c\|X(x)\|$ implies $\|X(x)\|/2\leq \|X(y)\|\leq 2\|X(x)\|$.

Now we consider the relation between $B^*_1(x, t, \varepsilon, X)$ and $B^*_2(x, t, \varepsilon, X)$. The following lemma can be found in the proof of Lemma 6 in \cite{WANG2024128673} (See also \cite{yang2025rescaledexpansivemeasuresflows}).

\begin{Lemma}\label{lemma3.3}
Fix arbitrary \( a\in(0, c) \)  and \( \tau > \min\{\frac{\log 2}{4L}, r_0\} \). There exists \( \varepsilon > 0 \) such that for any \( t > \tau \) and \( x \in M \setminus \text{Sing}(X) \), there exists  \( F \subseteq B_2^*(x, t, \varepsilon, X) \) with \( \#F \leq 3^{\lfloor\frac{t}{\tau}\rfloor} \) ,\,such that

\[
B_2^*(x, t, \varepsilon, X) \subseteq \bigcup_{y \in F} B_1^*(y, t, 16a, X).
\]
  \end{Lemma}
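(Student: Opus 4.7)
The plan is to encode each $y\in B_2^*(x,t,\varepsilon,X)$ by a combinatorial template recording its time reparametrization $\alpha_y$ up to a controlled resolution, bound the number of templates by $3^{\lfloor t/\tau\rfloor}$, and pick one representative per nonempty template class so that each class is contained in a single $B_1^*$-ball of radius $16a$ about its representative.

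First I would calibrate $\varepsilon$ using Lemma~\ref{controloftimereparametrization}. Fix some small $\lambda>0$ (depending on $a$, $L$ and the flowbox constants of Lemma~\ref{flowbox}) and apply that lemma with $b=\tau$ to obtain $\varepsilon_1>0$ so that for any $y\in B_2^*(x,t,\varepsilon,X)$ with $\varepsilon\le\varepsilon_1$, the associated $\alpha_y\in\Rep([0,t])$ obeys $|\alpha_y(s)-s|\le\lambda s$ on $[\tau,t]$ and $|\alpha_y(s)-s|\le\lambda\tau$ on $[0,\tau]$. Shrinking $\varepsilon$ further so that $\varepsilon<\min\{a,c/2,r_0/2\}$, Lemma~\ref{flowbox} becomes available along the relevant orbit pieces, and $\|X(\phi_s(y))\|$ stays within a factor of $2$ of $\|X(\phi_{\alpha_y(s)}(x))\|$.

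Next I would construct the encoding and the covering set $F$. Along the grid $\{k\tau:0\le k\le n\}$ with $n=\lfloor t/\tau\rfloor$, each orbit segment $\phi_{[(k-1)\tau,k\tau]}(y)$ sits inside the uniform flowbox about $\phi_{(k-1)\tau}(x)$; the ``local transit time'' of this orbit segment through the flowbox lies in an interval of length $O(\lambda\tau)$, which I split into three equal sub-windows. Recording which sub-window contains $y$'s transit time for each $k$ yields a label $\ell(y)\in\{1,2,3\}^n$, so there are at most $3^{\lfloor t/\tau\rfloor}$ nonempty label classes. Choose one representative $y_\ell$ per class and let $F$ be the resulting finite set; automatically $\#F\le 3^{\lfloor t/\tau\rfloor}$. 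For $y$ with $\ell(y)=\ell$ and $y^*:=y_\ell$, the shared label forces $\alpha_y$ and $\alpha_{y^*}$ to agree at every grid point up to the sub-window width, and monotonicity interpolates this to the uniform bound $|\alpha_y(s)-\alpha_{y^*}(s)|=O(a)$ on $[0,t]$. Lemma~\ref{flowbox} then gives $d(\phi_{\alpha_y(s)}(x),\phi_{\alpha_{y^*}(s)}(x))\lesssim |\alpha_y(s)-\alpha_{y^*}(s)|\cdot\|X\|$, and the triangle inequality
\[
d(\phi_s(y),\phi_s(y^*))\le d(\phi_s(y),\phi_{\alpha_y(s)}(x))+d(\phi_{\alpha_y(s)}(x),\phi_{\alpha_{y^*}(s)}(x))+d(\phi_{\alpha_{y^*}(s)}(x),\phi_s(y^*)),
\]
combined with the factor-of-$2$ comparison between $\|X(\phi_s(y^*))\|$ and $\|X(\phi_{\alpha_{y^*}(s)}(x))\|$, yields $d(\phi_s(y),\phi_s(y^*))<16a\|X(\phi_s(y^*))\|$, i.e.\ $y\in B_1^*(y^*,t,16a,X)$.

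The main obstacle is the non-accumulation of discretization error across the $n\sim t/\tau$ grid steps: a naive telescoping of three-bin errors would give $|\alpha_y(s)-\alpha_{y^*}(s)|=O(na)$, which explodes as $t\to\infty$. What saves the day is that each reparametrization is individually within $\lambda s$ of the identity by Lemma~\ref{controloftimereparametrization}, so their mutual difference is automatically capped by $2\lambda s$ everywhere; the three-bin encoding only needs to pin this down to $O(a)$ rather than $O(\lambda s)$. The hypothesis $\tau>\min\{\log 2/(4L),r_0\}$ is precisely what prevents two adjacent bins from drifting into one another under Lipschitz stretching across a single $\tau$-window, and is also the ultimate source of the specific constant $16$ in the final rescaled radius once all flowbox, Lipschitz, and $\|X\|$-comparison factors are accounted for.
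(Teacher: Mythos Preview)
Your overall strategy---encode each reparametrization combinatorially, bound the number of codes by $3^{\lfloor t/\tau\rfloor}$, and pick one representative per code---is exactly the paper's approach (which in turn follows Wang--Wen). But there is a genuine gap in how you set up the encoding and resolve the accumulation problem you correctly identify.

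As you describe it, the label $\ell(y)\in\{1,2,3\}^n$ records, for each $k$, which of three sub-windows contains the \emph{local transit time}, i.e.\ essentially the increment $\alpha_y(k\tau)-\alpha_y((k-1)\tau)$. Sharing such a label only forces the increments of $\alpha_y$ and $\alpha_{y^*}$ to agree to within one sub-window width at each step; telescoping then gives $|\alpha_y(k\tau)-\alpha_{y^*}(k\tau)|\le k\cdot(\text{sub-window width})$, which is precisely the $O(na)$ blow-up you flag. Your proposed rescue---``each reparametrization is within $\lambda s$ of the identity, so their mutual difference is automatically capped by $2\lambda s$''---does not help: $2\lambda s$ still grows linearly in $s$ and cannot be made $O(a)$ uniformly in $t$. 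The sentence ``the shared label forces $\alpha_y$ and $\alpha_{y^*}$ to agree at every grid point up to the sub-window width'' is therefore unjustified for an increment-based encoding.

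The paper's mechanism is different. One fixes $\theta>0$ (depending on $a$) and encodes the \emph{absolute} discrepancy $S_\alpha(i)=\lfloor(\alpha(i\tau)-i\tau)/(\theta/4)\rfloor$, a $\mathbb{Z}$-valued sequence with $S_\alpha(0)=0$. The $3^n$ bound does not come from partitioning each step into three bins; it comes from first proving the increment bound $|\alpha((i{+}1)\tau)-(i{+}1)\tau-(\alpha(i\tau)-i\tau)|\le\theta/4$, which forces $S_\alpha(i{+}1)-S_\alpha(i)\in\{-1,0,1\}$ and hence at most $3^n$ admissible paths. This increment bound is obtained by applying Lemma~\ref{controloftimereparametrization} \emph{after shifting the base point} to $\phi_{\alpha((k-1)\tau)}(x)$, with a \emph{small} parameter $b<\min\{\log 2/(4L),r_0\}$ (not $b=\tau$ as you propose). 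Once two reparametrizations share $S_\alpha$, one has $|\alpha(i\tau)-\beta(i\tau)|<\theta/4$ at every grid point \emph{directly}, with no telescoping, and the $16a$ conclusion then follows. Your proposal is missing this absolute-discrepancy encoding and the shifted local application of Lemma~\ref{controloftimereparametrization} that makes it work.
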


\noindent
\begin{proof}  We can obtain the conclusion from the proof of Lemma 6 in \cite{WANG2024128673}. For the sake of completeness, we give a sketch of the proof as follows.

Firstly, by the flow box theorem, we can take \( \theta > 0 \) such that for any \( z \in M \setminus\text{Sing}(X) \) and \( s \in [-\theta,\theta] \), we have
\[
d(\phi_s(z), z) < a \|X(z)\|.
\]
Fix a constant \( 0 < b < \min\left\{ \frac{\log 2}{4L}, r_0 \right\} \). Then by Lemma \ref{controloftimereparametrization}, one can choose \( 0 < \varepsilon < a \) such that for any \( y, z \), \( t \in [b, +\infty) \) and \( \alpha \in \text{Rep}[0, t] \), if
\[
d(\phi_{\alpha(s)}(y), \phi_s(z)) < \varepsilon \|X(\phi_{\alpha(s)}(y))\|, \quad \forall \, 0 \leq s \leq t,
\]
then \( |\alpha(s) - s| < \frac{\theta}{4\tau}s \) for any \( s \in [b, t] \).

For any given \(t>\tau \), let \( n = \left\lfloor \frac{t}{\tau} \right\rfloor \) and  \( I_0 \) be the set of all non-negative integer sequences \(\{S(n)\}\) with \( S: \{0, 1, \dots, n\} \to \mathbb{Z}^+ \),\,\( S(0) = 0 \) and \( S(i+1) - S(i)\in\{-1, 0, 1\}\).

 Note that for \(S\in I_0 \), once \( S(0), \ldots, S(i) \) are determined, \( S(i + 1) \) has only three choices. Thus, we have \( \#I_0 \leq 3^n \).

Let \( x \in M \setminus \text{Sing}(X) \) and \( y \in B_2^*(x, t, \varepsilon, X) \). Then there exists \( \alpha \in \text{Rep}[0, t] \) such that
\[
d(\phi_{\alpha(s)}(x), \phi_s(y)) < \varepsilon \|X(\phi_{\alpha(s)}(x))\|, \quad \forall \,0\leq s \leq t.
\]
By Claim 1 in the proof of Lemma 6 in \(\cite{WANG2024128673}\) and the choice of \( \varepsilon \), using Lemma \ref{controloftimereparametrization}, we can get that for any \( 0 \leq s_1 < s_2 \leq t \), if \( b \leq s_2 - s_1 \leq \tau \), then \[
|\alpha(s_2) - s_2 - (\alpha(s_1) - s_1)| \leq \frac{\theta}{4}.
\]
In particular, for \( 0 \leq i \leq n - 1 \), we have
\[
\left| \alpha((i + 1)\tau) - (i + 1)\tau - (\alpha(i\tau) - i\tau) \right| \leq \frac{\theta}{4}. \tag{1}
\]
Let \( S_\alpha(i) = \left\lfloor \frac{\alpha(i\tau) - i\tau}{\frac{\theta}{4}} \right\rfloor \). From (1), we have
\[
S_\alpha(i+1) - S_\alpha(i) \in \{-1, 0, 1\}.
\]
Thus \( S_\alpha \in I_0 \). Let \( I_1 \subseteq I_0 \) be the set of \( S \in I_0 \) satisfying \(\exists\, y \in B_2^*(x, t, \varepsilon, X) \) and \( \alpha \in \text{Rep}[0, t] \) such that
\[
S_{\alpha} = S  \text{   and   }
d(\phi_{\alpha(s)}(x), \phi_s(y)) < \varepsilon \|X(\phi_{\alpha(s)}(x))\|, \quad \forall\,0\leq s \leq t   .
\]

For each \( S \in I_1 \), we can take a corresponding \( y_S \in B_2^*(x, t, \varepsilon, X) \) and \( \alpha_S \in \text{Rep}[0, t] \) respectively. In this way, we obtain a finite set \( F \) of \( B_2^*(x, t, \varepsilon, X) \), and \( \#F \leq \#I_1 \leq \#I_0 \leq 3^n \).

For any \( z \in B_2^*(x, t, \varepsilon, X) \), there exists \( \beta \in \text{Rep}[0, t] \) such that
\[
d(\phi_{\beta(s)}(x), \phi_s(z)) < \varepsilon \|X(\phi_{\beta(s)}(x))\|, \quad \forall\,0\leq s \leq t.
\]
Let \( S_{\beta}(i) = \lfloor\frac{\beta(ir) - ir}{\frac{\theta}{4}}\rfloor \). Then \( S_{\beta} \in I_1 \). Thus, there exists \( y \in F \) and corresponding \( \alpha \in \text{Rep}[0, t] \) such that \( S_{\alpha} = S_{\beta} \).

By Claim 2 in the proof of Lemma 6 in \(\cite{WANG2024128673}\), since \( S_{\alpha} = S_{\beta} \), we have
\[
d(\phi_s(y), \phi_s(z)) \leq 16a \|X(\phi_s(y))\|, \quad \forall \,0\leq s \leq t.
\]
That is, \( z \in B_1^*(y, t, 16a, X) \). Therefore,
\[
B_2^*(x, t, \varepsilon, X) \subseteq \bigcup_{y \in F} B_1^*(y, t, 16a, X).
\]
The proof of Lemma \ref{lemma3.3} is completed.
\end{proof}

\begin{Lemma}\label{lemma3.4}
Let \( \mu \) be a Borel probability measure with \( \mu(\text{Sing}(X)) = 0 \). For any \( f \in C(M, \mathbb{R}) \) and any $\delta \in (0, 1)$, we have
\[
\lim_{\varepsilon \to 0} \limsup_{t \to \infty} \frac{1}{t} \log N_1^\mu(\delta, t, \varepsilon, X, f) = \lim_{\varepsilon \to 0} \limsup_{t \to \infty} \frac{1}{t} \log N_2^\mu(\delta, t, \varepsilon, X, f),
\]
\[
\lim_{\varepsilon \to 0} \liminf_{t \to \infty} \frac{1}{t} \log N_1^\mu(\delta, t, \varepsilon, X, f) = \lim_{\varepsilon \to 0} \liminf_{t \to \infty} \frac{1}{t} \log N_2^\mu(\delta, t, \varepsilon, X, f).
\]
\end{Lemma}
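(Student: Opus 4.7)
The plan is to prove two inequalities between the rescaled quantities attached to $B_1^*$ and $B_2^*$. The easy direction is immediate from the definitions: taking the reparametrization $\alpha(s)=s$ shows that $B_1^*(x,t,\varepsilon,X)\subseteq B_2^*(x,t,\varepsilon,X)$ for every $x$, $t$ and $\varepsilon$. Consequently every $F\in\mathcal{F}_1(t,\varepsilon)$ lies in $\mathcal{F}_2(t,\varepsilon)$, the infimum defining $N_2^\mu(\delta,t,\varepsilon,X,f)$ is taken over a larger family, and therefore $N_2^\mu(\delta,t,\varepsilon,X,f)\leq N_1^\mu(\delta,t,\varepsilon,X,f)$. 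After dividing by $t$, taking $\limsup$ (resp.\ $\liminf$) and then $\varepsilon\to 0$, we get
\[
\lim_{\varepsilon\to 0}\limsup_{t\to\infty}\frac{1}{t}\log N_2^\mu(\delta,t,\varepsilon,X,f)\leq \lim_{\varepsilon\to 0}\limsup_{t\to\infty}\frac{1}{t}\log N_1^\mu(\delta,t,\varepsilon,X,f),
\]
and analogously for $\liminf$.

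The non-trivial direction uses Lemma \ref{lemma3.3} together with the bounded variation property Proposition \ref{boundedvariation}. Fix $a\in(0,c)$ and $\tau>\min\{\log 2/(4L),r_0\}$, and let $\varepsilon>0$ be the constant produced by Lemma \ref{lemma3.3}, which we may further shrink so that the bounded variation estimate is in force. Given any $F\in\mathcal{F}_2(t,\varepsilon)$ and any $x\in F$, Lemma \ref{lemma3.3} provides a set $G_x\subseteq B_2^*(x,t,\varepsilon,X)$ with $\#G_x\leq 3^{\lfloor t/\tau\rfloor}$ and
\[
B_2^*(x,t,\varepsilon,X)\subseteq \bigcup_{y\in G_x}B_1^*(y,t,16a,X).
\]
Setting $F'=\bigcup_{x\in F}G_x$, we obtain $F'\in\mathcal{F}_1(t,16a)$ with $\#F'\leq 3^{\lfloor t/\tau\rfloor}\#F$. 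For each $y\in G_x$ we have $y\in B_2^*(x,t,\varepsilon,X)$, so Proposition \ref{boundedvariation} gives
\[
\int_0^t f(\phi_s(y))\,{\rm d}s\leq \int_0^t f(\phi_s(x))\,{\rm d}s+\gamma_{2,t}(f,\varepsilon).
\]
Summing over $F'$ and taking infimum over $F\in\mathcal{F}_2(t,\varepsilon)$ yields
\[
N_1^\mu(\delta,t,16a,X,f)\leq 3^{\lfloor t/\tau\rfloor}e^{\gamma_{2,t}(f,\varepsilon)}N_2^\mu(\delta,t,\varepsilon,X,f).
\]

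Taking $\frac{1}{t}\log$ of both sides and letting $t\to\infty$ in $\limsup$ gives, for every admissible $(a,\tau,\varepsilon)$,
\[
\limsup_{t\to\infty}\frac{1}{t}\log N_1^\mu(\delta,t,16a,X,f)\leq \frac{\log 3}{\tau}+\limsup_{t\to\infty}\frac{\gamma_{2,t}(f,\varepsilon)}{t}+\limsup_{t\to\infty}\frac{1}{t}\log N_2^\mu(\delta,t,\varepsilon,X,f).
\]
Letting $\varepsilon\to 0$ kills the middle term by Proposition \ref{boundedvariation}; then letting $a\to 0$ replaces the left-hand side by $\lim_{\varepsilon\to 0}\limsup_{t\to\infty}\frac{1}{t}\log N_1^\mu(\delta,t,\varepsilon,X,f)$; finally letting $\tau\to\infty$ eliminates $\log 3/\tau$. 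This gives the reverse inequality, and the same bookkeeping with $\liminf$ in place of $\limsup$ on both sides (the pointwise inequality between $N_1^\mu$ and $N_2^\mu$ holds for every large $t$) yields the $\liminf$ statement. The main delicate point is coordinating the order of the limits, since $\varepsilon$ depends on the previously chosen pair $(a,\tau)$; once this dependence is respected, the argument above closes the gap.
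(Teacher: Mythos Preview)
Your proposal is correct and follows essentially the same approach as the paper: the easy inequality from $B_1^*\subseteq B_2^*$, and the reverse inequality via Lemma~\ref{lemma3.3} combined with Proposition~\ref{boundedvariation}, leading to the key estimate $N_1^\mu(\delta,t,16a,X,f)\leq 3^{t/\tau}e^{\gamma_{2,t}(f,\varepsilon)}N_2^\mu(\delta,t,\varepsilon,X,f)$ and then passing to the successive limits $\varepsilon\to 0$, $a\to 0$, $\tau\to\infty$. The only cosmetic difference is that the paper introduces an auxiliary $\sigma>0$ and chooses $\varepsilon,T$ so that $\gamma_{2,t}(f,\varepsilon)\leq\sigma t$ for $t>T$, whereas you carry $\gamma_{2,t}(f,\varepsilon)$ through and invoke Proposition~\ref{boundedvariation} only at the end; both are equivalent.
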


\begin{proof} For any \( x \in M \setminus \text{Sing}(X) \), \( t > 0 \) and \( \varepsilon > 0 \), we have
\[
B_1^*(x, t, \varepsilon, X) \subseteq B_2^*(x, t, \varepsilon, X).
\]

\noindent
Thus, we always have \( N_1^\mu(\delta, t, \varepsilon, X, f) \geq N_2^\mu(\delta, t, \varepsilon, X, f) \), and therefore
\[
\lim_{\varepsilon \to 0} \limsup_{t \to \infty} \frac{1}{t} \log N_1^\mu(\delta, t, \varepsilon, X, f) \geq \lim_{\varepsilon \to 0} \limsup_{t \to \infty} \frac{1}{t} \log N_2^\mu(\delta, t, \varepsilon, X, f),
\]
\[
\lim_{\varepsilon \to 0} \liminf_{t \to \infty} \frac{1}{t} \log N_1^\mu(\delta, t, \varepsilon, X, f) \geq \lim_{\varepsilon \to 0} \liminf_{t \to \infty} \frac{1}{t} \log N_2^\mu(\delta, t, \varepsilon, X, f).
\]

Let \( \sigma > 0 \), \(a\in(0,c)\) and  \( \tau > \min\{\frac{\log 2}{4L}, r_0\} \) be given. By Lemma \ref{lemma3.3}, there exists \( \varepsilon > 0 \), s.t. for any \( t > \tau \) and any \( x \in M \setminus \text{Sing}(X) \), there exists a finite subset \( F_x \subseteq B_2^*(x, t, \varepsilon, X) \) with \( \#F_x \leq 3^{\left\lfloor \frac{t}{\tau} \right\rfloor} \) such that
\[
B_2^*(x, t, \varepsilon, X) \subseteq \bigcup_{y \in F_x} B_1^*(y, t, 16a, X).
\]

By Proposition \ref{boundedvariation}, we can choose \( \varepsilon > 0 \) small enough and \( T>0\) large enough such that for any \( t > T \),  any \( x \in M \setminus \text{Sing}(X) \) and any \( y, z \in B_2^*(x, t, \varepsilon, X) \), we have
\[
\frac{1}{t}\left| \int_0^t f(\phi_s(y)) \, {\rm d}s - \int_0^t f(\phi_s(z)) \, {\rm d}s \right| < \sigma.
\]
Then, for any \( y \in B_2^*(x, t, \varepsilon, X) \), we have \[ \left| \int_0^t f(\phi_s(y)) \, {\rm d}s - \int_0^t f(\phi_s(x)) \, {\rm d}s \right| < \sigma t. \]

Now we always assume \( t>\max\{\tau, T\}\). Let \( F \subseteq M \setminus \text{Sing}(X) \) be a finite set with
\[
\mu\left( \bigcup_{x \in F} B_2^*(x, t, \varepsilon, X) \right) > 1 - \delta.
\]

\noindent
For each \( x \in F \), take \( F_x \subseteq B_2^*(x, t, \varepsilon, X) \) with \(\#F_x \leq 3^{\left\lfloor \frac{t}{\tau} \right\rfloor} \) such that
\[
B_2^*(x, t, \varepsilon, X) \subseteq \bigcup_{y \in F_x} B_1^*(y, t, 16a, X).
\]

\noindent
Then we have
\[
\mu\left( \bigcup_{y \in {\bigcup\limits_{x\in F} F_x}} B_1^*(y, t, 16a, X) \right) \geq \mu\left( \bigcup_{x \in F} B_2^*(x, t, \varepsilon, X) \right) > 1 - \delta.
\]
One can see that
\begin{align*}
    \sum_{y \in {\bigcup\limits_{x\in F} F_x}} e^{\int_0^t f(\phi_s(y)) \, {\rm d}s} &= \sum_{x \in F} \sum_{y \in F_x} e^{\int_0^t f(\phi_s(y)) \, {\rm d}s}\leq \sum_{x \in F} \sum_{y \in F_x} e^{\int_0^t f(\phi_s(x)) \, {\rm d}s + \sigma t}\\
    &\leq \sum_{x \in F} 3^{\left\lfloor \frac{t}{\tau} \right\rfloor} e^{\int_0^t f(\phi_s(x)) \, {\rm d}s + \sigma t}\leq 3^{\frac{t}{\tau}} e^{\sigma t} \sum_{x \in F} e^{\int_0^t f(\phi_s(x)) \, {\rm d}s}.
\end{align*}

Take the infimum over \( F \) , we have
\[
N_1^\mu(\delta, t, 16a, X, f) \leq 3^{\frac{t}{\tau}} e^{\sigma t} N_2^\mu(\delta, t, \varepsilon, X, f). \tag{2}
\]
\noindent
Thus
\[
\limsup_{t \to \infty} \frac{1}{t} \log N_1^\mu(\delta, t, 16a, X, f) \leq \limsup_{t \to \infty} \frac{1}{t} \log N_2^\mu(\delta, t, \varepsilon, X, f) + \frac{\log 3}{\tau} + \sigma.
\]

\noindent
Let \( a \to 0 \) and \( \varepsilon \to 0 \), then
\[
\lim_{\varepsilon \to 0} \limsup_{t \to \infty} \frac{1}{t} \log N_1^\mu(\delta, t, \varepsilon, X, f) \leq \lim_{\varepsilon \to 0} \limsup_{t \to \infty} \frac{1}{t} \log N_2^\mu(\delta, t, \varepsilon, X, f) + \frac{\log 3}{\tau} + \sigma.
\]

\noindent
By the arbitrariness of \( \tau \) and \( \sigma \), let \( \tau \to \infty \) and \( \sigma \to 0 \), we have
\[
\lim_{\varepsilon \to 0} \limsup_{t \to \infty} \frac{1}{t} \log N_1^\mu(\delta, t, \varepsilon, X, f) \leq \lim_{\varepsilon \to 0} \limsup_{t \to \infty} \frac{1}{t} \log N_2^\mu(\delta, t, \varepsilon, X, f).
\]
Similarly, by the inequality (2), we can also get
\[
\lim_{\varepsilon \to 0} \liminf_{t \to \infty} \frac{1}{t} \log N_1^\mu(\delta, t, \varepsilon, X, f) \leq \lim_{\varepsilon \to 0} \liminf_{t \to \infty} \frac{1}{t} \log N_2^\mu(\delta, t, \varepsilon, X, f).
\]
\noindent
This proves that
\[
\lim_{\varepsilon \to 0} \limsup_{t \to \infty} \frac{1}{t} \log N_1^\mu(\delta, t, \varepsilon, X, f) = \lim_{\varepsilon \to 0} \limsup_{t \to \infty} \frac{1}{t} \log N_2^\mu(\delta, t, \varepsilon, X, f),
\]
\[
\lim_{\varepsilon \to 0} \liminf_{t \to \infty} \frac{1}{t} \log N_1^\mu(\delta, t, \varepsilon, X, f) = \lim_{\varepsilon \to 0} \liminf_{t \to \infty} \frac{1}{t} \log N_2^\mu(\delta, t, \varepsilon, X, f).
\]
These finish the proof of Lemma \ref{lemma3.4}.
\end{proof}

Combining Lemma \ref{lemma3.2} and Lemma \ref{lemma3.4} yields the conclusion of Theorem \ref{mainthm1}, which completes the proof of Theorem \ref{mainthm1}.

\section{Katok's Formula for the Rescaled Metric Pressure}\label{sec:Katokformula}
In this section, we will prove Theorem \ref{mainthm2}, which provides Katok's formula for the rescaled metric pressure. Firstly, we prepare several existing results. The following fundamental lemma is firstly given by Ma\~n\'e.

\begin{Lemma}\label{lemma4.1} \cite{Mane_1981} Let \(\mu\) be a probability measure on \(M\), and \(\rho: M \to (0,1)\) be a measurable function. If \(\rho\) is \(\mu\)-integrable, then there exists a countable measurable partition \(\xi\) of \(M\) such that
\begin{enumerate}
\item[1.] \(H_\mu(\xi)=-\sum_{A\in\xi} \mu(A)\log \mu(A) < +\infty\),

\item[2.]  \(\mathrm{diam}\,\xi(x) \leq \rho(x)\) for \(\mu\)-a.e. \(x \in M\),
\end{enumerate}
where \(\xi(x)\) denotes the element of \(\xi\) containing \(x\).
\end{Lemma}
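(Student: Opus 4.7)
I would follow Mañé's classical construction: partition $M$ by dyadic scales of $\rho$, cover each level set by a finite family of balls at the matching scale, and then control the total entropy through the integrability hypothesis.

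I define the level sets
\[
E_n := \{x \in M : 2^{-n} \leq \rho(x) < 2^{-n+1}\}, \qquad n \geq 1,
\]
which form a countable measurable partition of $M$ since $\rho$ takes values in $(0,1)$. For each $n$, by total boundedness of the compact Riemannian manifold $M$, I cover $E_n$ by $k_n$ open balls of radius $2^{-n-1}$, and a standard Borel-difference disjointification produces a finite measurable partition $\{A_{n,1},\dots,A_{n,k_n}\}$ of $E_n$ with every piece of diameter at most $2^{-n}$. Setting $\xi := \bigcup_{n \geq 1}\{A_{n,1},\dots,A_{n,k_n}\}$, one has for $x \in E_n$ that $\mathrm{diam}\,\xi(x) \leq 2^{-n} \leq \rho(x)$, which verifies condition (2). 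A volume-comparison argument on the Riemannian manifold gives the dimension-dependent estimate $k_n \leq C \cdot 2^{nd}$, where $d = \dim M$.

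For condition (1), I decompose $H_\mu(\xi)$ through the coarser partition $\{E_n\}$, obtaining
\[
H_\mu(\xi) \leq H_\mu(\{E_n\}_{n \geq 1}) + \sum_{n \geq 1} \mu(E_n)\log k_n \leq H_\mu(\{E_n\}_{n \geq 1}) + C_1 \sum_{n \geq 1} n\,\mu(E_n) + C_2.
\]
The integrability hypothesis on $\rho$ — in the quantitatively useful form $\int|\log\rho|\,d\mu < \infty$ actually invoked in the application to Theorem \ref{mainthm2} — yields $\sum_n n\,\mu(E_n) < \infty$, since $-\log \rho \geq (n-1)\log 2$ on $E_n$. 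For the remaining term $H_\mu(\{E_n\})$, the Gibbs inequality $-\sum p_n \log p_n \leq -\sum p_n \log q_n$ applied with reference weights $q_n = c/n^2$ reduces its finiteness to $\sum_n \mu(E_n)\log n < \infty$, which again follows from $\sum_n n\,\mu(E_n) < \infty$.

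The main technical obstacle will be the entropy estimate for the coarse partition $\{E_n\}_{n\geq 1}$, where the integrability hypothesis does the real work; the cleanest route is the Gibbs comparison above. The remaining steps — dyadic decomposition, covering, and disjointification — are purely combinatorial-geometric and pose no real difficulty, so the proof reduces essentially to pairing the geometry of $M$ (encoded in $k_n \lesssim 2^{nd}$) with the tail decay of $\mu$ on the level sets of $\rho$.
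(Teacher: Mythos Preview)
The paper does not prove this lemma; it is quoted from Ma\~n\'e's 1981 paper and used as a black box. Your proposal is a faithful reconstruction of Ma\~n\'e's original argument and is correct in outline and detail: dyadic stratification by the size of $\rho$, finite covers at matching scale with a packing bound $k_n \lesssim 2^{nd}$, and the entropy decomposition through the coarse partition $\{E_n\}$.

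You also put your finger on a genuine imprecision in the paper's statement. As written, the hypothesis ``$\rho$ is $\mu$-integrable'' is vacuous, since $\rho$ takes values in $(0,1)$. The hypothesis that does the work, as you note, is $\int |\log\rho|\,d\mu < \infty$; this is both Ma\~n\'e's original assumption and exactly what the paper invokes in the application (``Since $\log\eta\|X(x)\|$ is $\mu$-integrable\ldots''). Your Gibbs-inequality treatment of $H_\mu(\{E_n\})$ via comparison weights $q_n \propto n^{-2}$ is the standard and cleanest way to close the estimate.
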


To prove Theorem \ref{mainthm2}, we also need the following famous Shannon-McMillan-Breiman Theorem.

\begin{Lemma}\cite{sun2018}\label{lemma4.2}
Let \(\xi\) be a countable measurable partition with \(H_\mu(\xi)< +\infty\), and let \(T: M \to M\) be an ergodic map. Then for \(\mu\)-a.e. \(x \in M\)
\[
\lim\limits_{n \to \infty} \left( -\frac{1}{n} \log \mu(\xi_n(x)) \right) = h_{\mu}(\phi_\tau, \xi),
\]
where
\[
\xi_n(x) = \xi(x) \cap T^{-1}(\xi(Tx)) \cap \cdots \cap T^{-(n-1)}(\xi(T^{n-1}x)).
\]
\end{Lemma}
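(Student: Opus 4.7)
The plan is to prove this Shannon--McMillan--Breiman statement by the classical martingale-plus-Birkhoff argument. Write $I_n(x) := -\log \mu(\xi_n(x))$ and decompose it via the chain rule for conditional probabilities: for each $n$,
\[
I_n(x) \;=\; \sum_{k=0}^{n-1} g_{n-k-1}(T^k x),
\qquad
g_j(x) := -\log \mu\bigl(\xi(x) \,\big|\, T^{-1}\xi \vee \cdots \vee T^{-j}\xi\bigr)(x),
\]
with the convention $g_0(x) = -\log \mu(\xi(x))$. This identity follows by factoring $\mu(\xi_n(x))$ into $n$ conditional probabilities and using $T$-invariance of $\mu$ to pull each conditional back to a function evaluated at $T^k x$.

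Next, introduce the limiting conditional information function
\[
g_\infty(x) := -\log \mu\Bigl(\xi(x) \,\Big|\, {\textstyle\bigvee_{j\geq 1}} T^{-j}\xi\Bigr)(x).
\]
The martingale convergence theorem, applied to the monotone sequence of $\sigma$-algebras $T^{-1}\xi \vee \cdots \vee T^{-j}\xi$, yields $g_j \to g_\infty$ both $\mu$-almost everywhere and in $L^1(\mu)$. The $L^1$ control needed here rests on a standard Doob-type maximal inequality showing that $g^*(x) := \sup_{j \geq 0} g_j(x)$ is $\mu$-integrable; this is the point where the hypothesis $H_\mu(\xi) < \infty$ is consumed.

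Then I would invoke Birkhoff's ergodic theorem on $g_\infty \in L^1(\mu)$. Since $T$ is ergodic,
\[
\frac{1}{n}\sum_{k=0}^{n-1} g_\infty(T^k x) \;\longrightarrow\; \int g_\infty \,{\rm d}\mu \;=\; \lim_{j \to \infty} H_\mu\bigl(\xi \,\big|\, T^{-1}\xi \vee \cdots \vee T^{-j}\xi\bigr) \;=\; h_\mu(T,\xi)
\]
for $\mu$-a.e.\ $x$, the last equality being the standard limiting definition of entropy (and monotonicity of conditional entropies under refinement).

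The main obstacle is that in the decomposition of $I_n(x)/n$ the summand $g_{n-k-1}(T^k x)$ depends on $n$, so Birkhoff does not apply directly. The remedy is a two-scale truncation (a special case of Maker's generalized ergodic theorem): fix $N$ large and split
\[
\frac{1}{n}\sum_{k=0}^{n-1}\bigl(g_{n-k-1} - g_\infty\bigr)(T^k x) \;=\; \frac{1}{n}\sum_{k=0}^{n-N-1} + \frac{1}{n}\sum_{k=n-N}^{n-1}.
\]
For the first piece, bound each term by $G_N(T^k x)$ with $G_N(y) := \sup_{j \geq N}|g_j(y) - g_\infty(y)|$; note $G_N \leq g^* + g_\infty$ is integrably dominated and $G_N \to 0$ in $L^1$, so Birkhoff applied to $G_N$ produces an a.e.\ limit that tends to $0$ as $N \to \infty$. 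For the second piece of length $N$, bound it by $(N/n)\cdot\max_{k}g^*(T^k x)$, which vanishes a.e.\ as $n \to \infty$ because $(1/n)\sum_{k=0}^{n-1} g^*(T^k x)$ converges by Birkhoff applied to the integrable $g^*$. Sending $N \to \infty$ after $n \to \infty$ closes the estimate and finishes the proof.
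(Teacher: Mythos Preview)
Your argument is the standard martingale-plus-Birkhoff proof of the Shannon--McMillan--Breiman theorem and is correct; the chain-rule decomposition, Chung's maximal inequality yielding $g^*\in L^1(\mu)$ from $H_\mu(\xi)<\infty$, and the Maker-type truncation are all in order. The paper does not supply its own proof of this lemma---it is quoted as a classical result with a citation to a textbook---so there is nothing to compare against beyond noting that your argument is exactly the textbook proof one would find in the cited reference.
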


The following lemma allows us to discretize time $t$.

\begin{Lemma}\label{lemma4.3}
For any given \(\tau > 0\) and \(\varepsilon > 0\), it holds that
\[
\limsup_{t \to \infty} \frac{1}{t} \log N_1^\mu(\delta, t, \varepsilon, X, f)
\;=\;
\limsup_{n \to \infty} \frac{1}{n\tau} \log N_1^\mu(\delta, n\tau, \varepsilon, X, f),
\]
\[
\liminf_{t \to \infty} \frac{1}{t} \log N_1^\mu(\delta, t, \varepsilon, X, f)
\;=\;
\liminf_{n \to \infty} \frac{1}{n\tau} \log N_1^\mu(\delta, n\tau, \varepsilon, X, f).
\]
\end{Lemma}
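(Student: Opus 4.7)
The plan is to sandwich $N_1^\mu(\delta,t,\varepsilon,X,f)$ between its values at the nearest integer multiples of $\tau$ and then pass $\frac{1}{t}\log$ to the limit. One direction of each equality is automatic: since $\{n\tau\}_{n\in\mathbb{N}}$ is a subsequence of $t\to\infty$,
\[
\limsup_{n\to\infty}\frac{1}{n\tau}\log N_1^\mu(\delta,n\tau,\varepsilon,X,f) \;\le\; \limsup_{t\to\infty}\frac{1}{t}\log N_1^\mu(\delta,t,\varepsilon,X,f),
\]
and the analogous inequality with $\liminf$ reversed holds. So only the reverse inequalities require genuine work.

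For the other direction, I would fix $t>\tau$ and set $n=\lfloor t/\tau\rfloor$, so that $n\tau\le t<(n+1)\tau$. Directly from the definition of $B_1^*$ one has the monotonicity
\[
B_1^*(x,(n+1)\tau,\varepsilon,X) \subseteq B_1^*(x,t,\varepsilon,X) \subseteq B_1^*(x,n\tau,\varepsilon,X),
\]
and hence $\mathcal{F}_1((n+1)\tau,\varepsilon) \subseteq \mathcal{F}_1(t,\varepsilon) \subseteq \mathcal{F}_1(n\tau,\varepsilon)$. Combining this with the elementary estimate $\left|\int_0^t f(\phi_s(x))\,{\rm d}s - \int_0^{m\tau} f(\phi_s(x))\,{\rm d}s\right| \le \|f\|\tau$ for $m\in\{n,n+1\}$, a direct comparison of the defining infima (feeding $F\in\mathcal{F}_1((n+1)\tau,\varepsilon)$ into the definition of $N_1^\mu(\delta,t,\varepsilon,X,f)$ for one side, and $F\in\mathcal{F}_1(t,\varepsilon)$ into the definition of $N_1^\mu(\delta,n\tau,\varepsilon,X,f)$ for the other) produces the sandwich
\[
e^{-\|f\|\tau}\,N_1^\mu(\delta,n\tau,\varepsilon,X,f) \;\le\; N_1^\mu(\delta,t,\varepsilon,X,f) \;\le\; e^{\|f\|\tau}\,N_1^\mu(\delta,(n+1)\tau,\varepsilon,X,f).
\]

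Then I would take $\frac{1}{t}\log$ throughout and rewrite each discrete quantity as $\frac{m\tau}{t}\cdot\frac{1}{m\tau}\log N_1^\mu(\delta,m\tau,\varepsilon,X,f)$ for $m\in\{n,n+1\}$. As $t\to\infty$ both factors $\frac{n\tau}{t},\frac{(n+1)\tau}{t}$ tend to $1$, while $\frac{\|f\|\tau}{t}\to 0$, so taking $\limsup$ of the upper bound and $\liminf$ of the lower bound delivers the two missing inequalities. The only point that calls for a short verification, and which I regard as the main (though minor) obstacle, is that multiplying by the factors $\frac{n\tau}{t},\frac{(n+1)\tau}{t}$, which are squeezed in $(1-\frac{1}{n+1},\,1+\frac{1}{n})$, preserves $\limsup$ and $\liminf$ even when the discrete-time extended limits are negative or infinite; this is handled case-by-case from the two-sided bounds on those factors together with the additive $o(1)$ error $\frac{\|f\|\tau}{t}$.
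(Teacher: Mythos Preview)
Your proposal is correct and follows essentially the same approach as the paper: both arguments use the monotonicity $\mathcal{F}_1(t_1,\varepsilon)\subseteq\mathcal{F}_1(t_2,\varepsilon)$ for $t_1>t_2$, bound the difference of the Birkhoff integrals by $\|f\|\tau$, and then absorb the factor $\frac{m\tau}{t}\to 1$ when passing to the limit. The only cosmetic difference is that you present the two inequalities as a single sandwich, whereas the paper treats the $\limsup$ (via $(n+1)\tau$) and the $\liminf$ (via $n\tau$) separately.
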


\begin{proof} It is obvious that
\[
\limsup_{n \to \infty} \frac{1}{n\tau} \log N_1^\mu(\delta, n\tau, \varepsilon, X, f)
\;\leq\;
\limsup_{t \to \infty} \frac{1}{t} \log N_1^\mu(\delta, t, \varepsilon, X, f),
\]
\[
\liminf_{n \to \infty} \frac{1}{n\tau} \log N_1^\mu(\delta, n\tau, \varepsilon, X, f)
\;\geq\;
\liminf_{t \to \infty} \frac{1}{t} \log N_1^\mu(\delta, t, \varepsilon, X, f).
\]

\noindent
Now, we're going to prove
\[
\limsup_{t \to \infty} \frac{1}{t} \log N_1^\mu(\delta, t, \varepsilon, X, f)
\;\leq\;
\limsup_{n \to \infty} \frac{1}{n\tau} \log N_1^\mu(\delta, n\tau, \varepsilon, X, f).
\]
Recall that
\[
\mathcal{F}_1(t, \varepsilon)
= \left\{
F \subset M \setminus \text{Sing}(X)
:
\mu\left( \bigcup_{x \in F} B_1^*(x, t, \varepsilon, X) \right) > 1 - \delta
\right\}.
\]
Note that \( \mathcal{F}_1(t_1, \varepsilon) \subseteq \mathcal{F}_1(t_2, \varepsilon) \) providing \( t_1 > t_2 \). We have

\begin{align*}
N_1^\mu(\delta, t, \varepsilon, X, f)
&= \inf_{\substack{F \in \mathcal{F}_1(t, \varepsilon) }}
\sum_{x \in F} e^{\int_0^t f(\phi_s(x)) \, {\rm d}s}\leq \inf_{\substack{F \in \mathcal{F}_1\left(\left( \left\lfloor \frac{t}{\tau} \right\rfloor+1\right) \tau, \varepsilon \right) }}
\sum_{x \in F} e^{\int_0^t f(\phi_s(x)) \, {\rm d}s} \\
&\leq \inf_{\substack{F \in \mathcal{F}_1\left(\left( \left\lfloor \frac{t}{\tau} \right\rfloor+1\right) \tau, \varepsilon \right) }}
\sum_{x \in F} e^{\int_0^{( \left\lfloor \frac{t}{\tau} \right\rfloor+1) \tau } f(\phi_s(x)) \, {\rm d}s  }\cdot e^{\tau\|f\|  } \\
&= N_1^\mu\left( \delta, \left(\left\lfloor \frac{t}{\tau} \right\rfloor+1\right) \tau, \varepsilon, X, f \right)
\cdot e^{\tau\|f\|  }.
\end{align*}

\noindent
Thus
\begin{align*}
\limsup_{t \to \infty}  \frac{1}{t}\log N_1^\mu(\delta, t, \varepsilon, X, f)
&\leq \limsup_{t \to \infty} \frac{1}{t} \left[ \log N_1^\mu\left( \delta, \left(\left\lfloor \frac{t}{\tau} \right\rfloor+1\right) \tau, \varepsilon, X, f \right) + \|f\| \cdot \tau \right] \\
&= \limsup_{t \to \infty} \frac{\left(\left\lfloor \frac{t}{\tau} \right\rfloor+1\right) \tau}{t}
\cdot \frac{1}{\left(\left\lfloor \frac{t}{\tau} \right\rfloor+1\right) \tau} \log N_1^\mu\left( \delta, \left(\left\lfloor \frac{t}{\tau} \right\rfloor+1\right) \tau, \varepsilon, X, f \right) \\
&= \limsup_{t \to \infty} \frac{1}{\left(\left\lfloor \frac{t}{\tau} \right\rfloor+1\right) \tau} \log N_1^\mu\left( \delta,\left(\left\lfloor \frac{t}{\tau} \right\rfloor+1\right) \tau, \varepsilon, X, f \right) \\
&= \limsup_{n \to \infty} \frac{1}{n\tau} \log N_1^\mu(\delta, n\tau, \varepsilon, X, f).
\end{align*}
Similarly, we can prove
\[\liminf_{t \to \infty}  \frac{1}{t}\log N_1^\mu(\delta, t, \varepsilon, X, f)\geq\liminf_{n \to \infty} \frac{1}{n\tau} \log N_1^\mu(\delta, n\tau, \varepsilon, X, f), \]
by the fact that

\[N_1^\mu(\delta, t, \varepsilon, X, f)\geq N_1^\mu\left( \delta, \left\lfloor \frac{t}{\tau} \right\rfloor\tau, \varepsilon, X, f\right)
\cdot e^{-\tau\|f\|  }.\]
The lemma is proved.
\end{proof}

\begin{Lemma}\label{lemma4.4}
Let \( X \) be a \( C^1 \) vector field on a compact boundaryless Riemannian manifold \( M \), and let \( \phi_t \) be the flow generated by   X, \( \mu \in \mathcal{E}_X(M) \) with \( \mu(\text{Sing}(X)) = 0 \) and \( \int \log \norm{X(x)} \dmu < \infty \). For any continuous function \( f \in C(X, \mathbb{R}) \), the inequality
\[
\lim_{\varepsilon \to 0} \limsup_{t \to \infty} \frac{1}{t} \log N_i^\mu(\delta, t, \varepsilon, X, f)
\leq h_\mu(\phi_1) + \int f \, {\rm d}\mu,\quad i = 1, 2, 3. \tag{3}
\]
holds.
\end{Lemma}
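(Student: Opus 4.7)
By Theorem \ref{mainthm1} it suffices to prove the inequality for $i = 1$, and by Lemma \ref{lemma4.3} (with $\tau = 1$) it is enough to bound $\limsup_{n\to\infty}\frac{1}{n}\log N_1^\mu(\delta, n, \varepsilon, X, f)$. The overall strategy is a rescaled Katok construction: Lemma \ref{lemma4.1} will furnish a countable partition $\xi$ whose atoms, at a scale proportional to $\|X(x)\|$, fit inside rescaled Bowen balls of the time-one map; Shannon--McMillan--Breiman (Lemma \ref{lemma4.2}) will count the atoms in $\xi_n := \bigvee_{k=0}^{n-1}\phi_1^{-k}\xi$, while Birkhoff's ergodic theorem will estimate the potential term.

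Fix $\epsilon>0$. Let $L$ be a constant such that both $d(\phi_s u, \phi_s v)\leq e^{L|s|}d(u,v)$ and $e^{-L|s|}\leq \|X(\phi_s x)\|/\|X(x)\|\leq e^{L|s|}$ hold for all $s\in\mathbb{R}$ and $x,u,v\in M\setminus\text{Sing}(X)$ (the second bound follows from $|\tfrac{d}{ds}\log\|X(\phi_s x)\||\leq\|DX\|_{C^0}$). Set $c=e^{-2L}/2$, fix $\varepsilon>0$ small, and define
\[
\rho(x)=\min\{c\,\varepsilon\,\|X(x)\|,\;1/2\},\qquad x\in M.
\]
Because $\|X\|$ is bounded above and $\int\log\|X\|\,d\mu$ is finite, $-\log\rho\in L^1(\mu)$; Lemma \ref{lemma4.1} thus yields a countable measurable partition $\xi$ with $H_\mu(\xi)<\infty$ and $\text{diam}\,\xi(x)\leq\rho(x)$ for $\mu$-a.e.\ $x$. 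By $\phi$-invariance of $\mu$, the diameter bound holds simultaneously at every $\phi_k x$, $k\in\mathbb Z$, for $\mu$-a.e.\ $x$. Applying Lemma \ref{lemma4.2} to $\phi_1$ with partition $\xi$ and Birkhoff's theorem to $s\mapsto f(\phi_s x)$ gives, for $\mu$-a.e.\ $x$,
\[
-\tfrac{1}{n}\log\mu(\xi_n(x))\to h_\mu(\phi_1,\xi)\leq h_\mu(\phi_1),\qquad \tfrac{1}{n}\int_0^n f(\phi_s x)\,ds\to\int f\,d\mu,
\]
and, via Egorov, for all $n$ sufficiently large there is a set $G_n$ with $\mu(G_n)>1-\delta$ on which both convergences hold to within $\epsilon$ and the diameter bounds $\text{diam}\,\xi(\phi_k x)\leq\rho(\phi_k x)$ hold for every $0\leq k\leq n-1$.

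The key geometric step is that for $x\in G_n$ and $y\in\xi_n(x)$ one has $d(\phi_k x,\phi_k y)\leq c\varepsilon\|X(\phi_k x)\|$ at each integer $k\leq n-1$; interpolating to $s\in[k,k+1]$ with the two exponential bounds defining $L$ yields
\[
d(\phi_s x,\phi_s y)\leq e^L c\varepsilon\|X(\phi_k x)\|\leq e^{2L}c\varepsilon\|X(\phi_s x)\|\leq\varepsilon\|X(\phi_s x)\|,
\]
so $\xi_n(x)\subset B_1^*(x,n,\varepsilon,X)$. From each atom $A$ of $\xi_n$ meeting $G_n$ pick one representative $x_A\in A\cap G_n$; then $F:=\{x_A\}$ satisfies $\bigcup_{x_A}B_1^*(x_A,n,\varepsilon,X)\supset G_n$, so $F\in\mathcal{F}_1(n,\varepsilon)$. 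Since each such atom has $\mu$-mass at least $e^{-n(h_\mu(\phi_1,\xi)+\epsilon)}$, there are at most $e^{n(h_\mu(\phi_1,\xi)+\epsilon)}$ of them, and since $x_A\in G_n$ one has $\int_0^n f(\phi_s x_A)\,ds\leq n(\int f\,d\mu+\epsilon)$, giving
\[
N_1^\mu(\delta,n,\varepsilon,X,f)\leq e^{n(h_\mu(\phi_1)+\int f\,d\mu+2\epsilon)}.
\]
Dividing by $n$, taking $\limsup$ and then $\varepsilon\to 0$ and $\epsilon\to 0$ concludes. The main delicacy is realizing the containment $\xi_n(x)\subset B_1^*(x,n,\varepsilon,X)$: the partition diameter must decay like $\|X\|$ near $\text{Sing}(X)$, which is exactly where $\int\log\|X\|\,d\mu$ being finite becomes indispensable, since it is what keeps $H_\mu(\xi)$ finite and lets Shannon--McMillan--Breiman be applied to the resulting partition.
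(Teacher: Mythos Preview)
Your overall strategy matches the paper's: reduce to $i=1$ via Theorem \ref{mainthm1}, discretize time via Lemma \ref{lemma4.3}, use Ma\~n\'e's Lemma \ref{lemma4.1} to build a partition $\xi$ with atoms of diameter $\lesssim\varepsilon\|X(x)\|$, then combine Shannon--McMillan--Breiman with Birkhoff and Egorov to produce a spanning family of rescaled Bowen balls. The geometric interpolation showing $\xi_n(x)\subset B_1^*(x,n,\varepsilon,X)$ is correct and is exactly what the paper does (they phrase the scale choice as picking $\eta$ so that $d(x,y)<\eta\|X(x)\|$ forces $d(\phi_s x,\phi_s y)<\varepsilon\|X(\phi_s x)\|$ for $s\in[0,\tau]$).

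There is, however, a genuine gap: you apply Lemma \ref{lemma4.2} to $T=\phi_1$, but that lemma requires $T$ to be ergodic, and flow-ergodicity of $\mu$ does \emph{not} imply that $\phi_1$ is ergodic. (Concrete obstruction: if $\mu$ sits on a periodic orbit of period $1$, then $\phi_1$ restricted to the support of $\mu$ is the identity.) Without ergodicity the Shannon--McMillan--Breiman limit $-\tfrac1n\log\mu(\xi_n(x))$ converges to an a priori nonconstant $\phi_1$-invariant function, and your atom-counting bound ``each such atom has $\mu$-mass at least $e^{-n(h_\mu(\phi_1,\xi)+\epsilon)}$'' fails on the part of $G_n$ where that function exceeds $h_\mu(\phi_1,\xi)+\epsilon$.

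The paper handles this by invoking Pugh--Shub \cite{pugh1971ergodic}: since $\mu$ is flow-ergodic, there exists $\tau\neq 0$ for which $\mu$ is $\phi_\tau$-ergodic. One then runs the entire argument with time step $\tau$ (Lemma \ref{lemma4.3} with that $\tau$), applies Lemma \ref{lemma4.2} to the ergodic map $\phi_\tau$, and at the end uses Abramov's relation $\tfrac{1}{\tau}h_\mu(\phi_\tau)=h_\mu(\phi_1)$ to land on the claimed bound. With this single modification your argument goes through essentially as written.
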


\begin{proof}
By Theorem \ref{mainthm1}, it suffices to prove that
\[
\lim_{\varepsilon \to 0} \limsup_{t \to \infty} \frac{1}{t} \log N_1^\mu(\delta, t, \varepsilon, X, f)
\leq h_\mu(\phi_1) + \int f \, {\rm d}\mu. \tag{4}
\]

Since \( \mu \in \mathcal{E}_X(M) \), there exists \( \tau \in \mathbb{R} \setminus \{0\} \) such that \( \mu \) is an ergodic measure of \( \phi_\tau \) (the proof is given in \( \cite{pugh1971ergodic}\)). Without loss of generality, we assume \( \tau > 0 \) (otherwise we can consider \( -\tau \)). By Lemma \ref{lemma4.3}, to prove (4), it suffices to prove
\[
\lim_{\varepsilon \to 0} \limsup_{n \to \infty} \frac{1}{n\tau} \log N_1^\mu(\delta, n\tau, \varepsilon, X, f)
\leq \frac{1}{\tau}h_\mu(\phi_\tau) + \int f \, {\rm d}\mu.
\]

\noindent
By Birkhoff's Ergodic Theorem, we have
\[
\lim_{n \to \infty} \frac{1}{n\tau} \int_0^{n\tau} f(\phi_s(x)) \, {\rm d}s
= \int f \, {\rm d}\mu
\]
for \(\mu\text{-a.e. } x \in M\). By Egoroff's Theorem, there exists \( B \subseteq M \setminus \text{Sing}(X) \) such that \( \mu(B) > 1 - {\delta}/{2} \) and
\[
\frac{1}{n\tau} \int_0^{n\tau} f(\phi_s(x)) \, {\rm d}s
\rightrightarrows \int f \, {\rm d}\mu
\quad \text{  on  } B .
\]

\noindent
Thus, for any \(\ \sigma > 0\), there exists \(N_1\), such that
\[
\left| \frac{1}{n\tau} \int_{0}^{n\tau} f(\phi_{s}(x)) \, {\rm d}s - \int f \, {\rm d}\mu \right| < \sigma
\]
for any \(n > N_1\) and $x \in B$.

For any given \( \varepsilon > 0 \), we can take \( \eta > 0 \) such that for any \( x,y \in M \setminus \text{Sing}(X) \), if \( d(x,y) < \eta \|X(x)\| \), then
\(d(\phi_{s}(x), \phi_{s}(y)) < \varepsilon \|X(\phi_{s})\|\) for any \(s\in[0,\tau]\).

Since \( \log \eta \|X(x)\| \) is \( \mu \)-integrable, by Lemma \ref{lemma4.1}, there exists a countable measurable partition \( \xi \) on \( M \) such that \( H_\mu(\xi) < +\infty \) and \( \text{diam}(\xi(x)) < \eta \|X(x)\| \) for \( \mu \)-a.e. \( x \in M \). The set $$\tilde{M}=\{x\in M\setminus{\rm Sing}(X) : diam(\xi(\phi_{n\tau}(x)))< \eta\|X(\phi_{n\tau}(x))\|, \forall n\in\mathbb{Z}\}$$ has full measure.
Then, by the Shannon-McMillan-Breiman Theorem, for \(\mu\text{-a.e. } x \in M\), we have
\[
\lim\limits_{n \to \infty} \left( -\frac{1}{n} \log \mu(\xi_n(x)) \right) = h_{\mu}(\phi_\tau, \xi),
\]
where \(\xi_n(x) = \xi(x) \cap \phi_{-\tau}(\xi(\phi_{\tau}(x))) \cap \cdots \cap \phi_{-(n - 1)\tau}(\xi(\phi_{(n - 1)\tau}(x)))\). By Egoroff's Theorem, there exists \( \tilde{B} \subseteq \tilde{M}\) such that $\mu(\tilde{B})> 1-\delta/2$ and
\[
\lim\limits_{n \to \infty} \left( -\frac{1}{n} \log \mu(\xi_n(x)) \right)
\]
uniformly converges to \( h_{\mu}(\phi_\tau, \xi) \) on \( \tilde{B} \). That is, for any \( \sigma > 0 \), there exists \( N_2 \) such that once \( n > N_2 \), we have
\[
\left| -\frac{1}{n} \log \mu(\xi_n(x)) - h_\mu(\phi_\tau, \xi) \right| < \sigma, \quad \forall x \in \tilde{B}.
\]
Thus, for any \( n > N_2 \), we have
\[
-\frac{1}{n} \log \mu(\xi_n(x)) - h_\mu(\phi_\tau, \xi) < \sigma.
\]

\noindent
\text{Then}
\[
\mu(\xi_n(x)) > e^{-n(h_\mu(\phi_\tau, \xi) + \sigma)}
\]
for any $x\in\tilde{B}$.

For any given \( \sigma > 0 \), take \( N_1, N_2 \) as above, and let \( n > \max\{N_1, N_2\} \). Denote by $\xi_n=\xi\vee\phi_{-\tau}\xi\vee\cdots\vee\phi_{-(n-1)\tau}\xi$. Define
\[
A_{n,\sigma} = \left\{ A \in \xi_n : A\subset \tilde{M}, \mu(A) > e^{-n(h_\mu(\phi_\tau, \xi) + \sigma)} \right\}.
\]
Then \( \tilde{B} \subseteq \bigcup\limits_{A \in A_{n,\sigma}} A \). Note that \( \# A_{n,\sigma} < e^{n(h_\mu(\phi_\tau, \xi) + \sigma)} \).

For each \( A \in A_{n,\sigma} \), once \( A \cap {B} \neq \varnothing \),  we arbitrarily take an element from the intersection, then we get a set \( F\subset B \) with \( \#F < e^{n(h_\mu(\phi_\tau, \xi) + \sigma)} \)
and \[ B \cap \left( \bigcup\limits_{A \in A_{n,\sigma}} A \right) \subset \bigcup\limits_{x \in F} \xi_n(x). \]
Given any \( x \in F \), since
\[
\xi_n(x) = \xi(x) \cap \phi_{-\tau}(\xi(\phi_\tau(x))) \cap \cdots \cap \phi_{-(n-1)\tau}(\xi(\phi_{(n-1)\tau}(x))),
\]
and for each \( 0 \leq k \leq n-1 \), \( \text{diam}(\xi(\phi_{k\tau}(x))) < \eta \| X(\phi_{k\tau}(x)) \| \). And since \( \phi_{k\tau}(\xi_n(x)) \subseteq \xi(\phi_{k\tau}(x)) \), it follows that for any \( y \in \xi_n(x) \),
\[
d(\phi_{k\tau}(y), \phi_{k\tau}(x)) < \eta \| X(\phi_{k\tau}(x)) \|.
\] Furthermore, for all \( 0 \leq s \leq n\tau \), we have
\[
 d(\phi_s(y), \phi_s(x)) < \varepsilon \| X(\phi_{s}(x)) \| .
\]
Thus \( \xi_n(x) \subseteq B_1^*(x, n\tau, \varepsilon, X) \) and then
\[
 \mu\left( \bigcup\limits_{x \in F} B_1^*(x, n\tau, \varepsilon, X) \right) \geq \mu\left( \bigcup\limits_{x \in F} \xi_n(x) \right) \geq \mu\left( B \cap \left( \bigcup\limits_{A \in A_{n,\sigma}} A \right) \right) \geq\mu\left(B\cap\tilde{B}\right)> 1 - \delta.
 \]

Now, we estimate the sum
\[
\sum_{x \in F} e^{\int_{0}^{n \tau} f(\phi_s(x)) \, {\rm d}s}.
\]
Note that for any \( n > N_1 \) and any \(  x \in B \), we have
\( \left| \frac{1}{n\tau} \int_{0}^{n\tau} f(\phi_s(x)) \, {\rm d}s - \int f \, {\rm d}\mu \right| < \sigma \), and  then
\[ \int_{0}^{n\tau} f(\phi_s(x)) \, {\rm d}s < n\tau \left( \int f \, {\rm d}\mu + \sigma \right). \]

\noindent
Since \( \#F < e^{n(h_\mu(\phi_\tau, \xi) + \sigma)} \) and $F\subset B$, we obtain
\[
 \sum\limits_{x \in F} e^{\int_{0}^{n\tau} f(\phi_s(x)) \, {\rm d}s} < e^{n(h_\mu(\phi_\tau, \xi) + \sigma)} \cdot e^{n\tau \left( \int f \, {\rm d}\mu + \sigma \right)}.
\]
This follows that
\begin{align*}
\frac{1}{n\tau} \log N_1^\mu(\delta, n\tau, \varepsilon, X, f)
&< \frac{1}{n\tau} \left[ n(h_\mu(\phi_\tau, \xi) + \sigma) + n\tau \left( \int f \, {\rm d}\mu + \sigma \right) \right] \\
&= \frac{1}{\tau} h_\mu(\phi_\tau, \xi) + \frac{\sigma}{\tau} + \int f \, {\rm d}\mu + \sigma \\
&= \frac{1}{\tau} h_\mu(\phi_\tau, \xi) + \int f \, {\rm d}\mu + \left( \frac{1}{\tau} + 1 \right) \sigma.
\end{align*}
Thus for any \(\sigma > 0 \), there exists \(N =\max\{N_1, N_2\}\) such that if \( n > N \)
\[
\frac{1}{n\tau} \log N_1^\mu(\delta, n\tau, \varepsilon, X, f) < \frac{1}{\tau} h_\mu(\phi_\tau, \xi) + \int f \, {\rm d}\mu + \left( \frac{1}{\tau} + 1 \right) \sigma,
\]
then by the arbitrariness of $\sigma$ and $\tau$, we have
\[
\limsup\limits_{n \to \infty} \frac{1}{n\tau} \log N_1^\mu(\delta, n\tau, \varepsilon, X, f) \leq \frac{1}{\tau} h_\mu(\phi_\tau, \xi) + \int f \, {\rm d}\mu\leq\frac{1}{\tau}h_\mu(\phi_\tau)+\int f{\rm d}\mu=h_\mu(\phi_1)+\int f{\rm d}\mu.
\]
This finishes the proof of Lemma \ref{lemma4.4}.
\end{proof}

Let \(N \geq 3\), and let \(\Omega_{N,n}\) be the set of all mappings from \(\{0, 1, 2, \ldots, n-1\}\) to \(\{1, 2, \ldots, N\}\). Define a distance function \(\rho_{N,n}: \Omega_{N,n} \times \Omega_{N,n} \to [0, 1]\) on \(\Omega_{N,n}\) as follows
\[
\rho_{N,n}(w, \overline{w}) = \frac{1}{n} \sum_{i=0}^{n-1} \bigl( 1 - \delta_{w(i), \overline{w}(i)} \bigr),
\]
where
\[
\delta_{i,j} =
\begin{cases}
0, & i \neq j; \\
1, & i = j.
\end{cases}
\]
The proof of the following lemma can be found in Lemma 4.3.2 in  \(\cite{sun2018}\)   or in Katok's work \(\cite{katok1980lyapunov} .\)

\begin{Lemma}\label{lemma4.5}
 Let \( B(\overline{\omega}, r; N, n) \) denote the ball in \( \Omega_{N,n} \) with center \( \overline{\omega} \) and radius \( r \), that is
\[
B(\overline{\omega}, r; N, n) = \left\{ \omega \in \Omega_{N,n} : \rho_{N,n}(\omega, \overline{\omega}) \leq r \right\}.
\]
Then the following two properties hold:
\begin{enumerate}
\item[1.]
\(
\# B(\overline{\omega}, r; N, n) = \sum_{k=0}^{\lfloor nr \rfloor} C_n^k (N-1)^k,
\)

\item[2.] Let \( P(N, n, r) = \# B(\overline{\omega}, r; N, n) \). When \( 0 \leq r < \dfrac{N-2}{N} \), we have
\[
\lim_{n \to \infty} \frac{\log P(N, n, r)}{n} = r \log(N-1) - r \log r - (1-r) \log(1-r).
\]
\end{enumerate}
\end{Lemma}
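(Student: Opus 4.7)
The lemma is purely combinatorial and has nothing to do with flows; the plan is to establish part (1) by a direct count and then to derive the asymptotic in (2) by identifying the dominant term in that sum and applying Stirling's formula. The key observation for the asymptotics is that, under the hypothesis $r<(N-2)/N$, the summands $a_k=C_n^k(N-1)^k$ are strictly increasing in $k$ on the range $0\le k\le \lfloor nr\rfloor$, so the last term controls the whole sum up to a polynomial factor in $n$.

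For part (1), observe that $\rho_{N,n}(\omega,\overline{\omega})\le r$ is equivalent to $\#\{i : \omega(i)\neq\overline{\omega}(i)\}\le \lfloor nr\rfloor$. I would fix $k\in\{0,1,\ldots,\lfloor nr\rfloor\}$, choose the $k$ positions where $\omega$ and $\overline{\omega}$ differ (there are $C_n^k$ ways) and, at each chosen position, pick any symbol in $\{1,\ldots,N\}$ other than $\overline{\omega}(i)$ (there are $N-1$ choices). Summing over $k$ gives the formula $\#B(\overline{\omega},r;N,n)=\sum_{k=0}^{\lfloor nr\rfloor}C_n^k(N-1)^k$ directly, independent of the center $\overline{\omega}$.

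For part (2), I would compare consecutive summands: $a_{k+1}/a_k=(n-k)(N-1)/(k+1)$, which is $\ge 1$ as long as $k\le n(N-1)/N-1/N$. Since $r<(N-2)/N<(N-1)/N$, for all sufficiently large $n$ every $k\le \lfloor nr\rfloor$ satisfies this inequality, so $a_k$ is increasing on the summation range. Consequently
\[
a_{\lfloor nr\rfloor}\le P(N,n,r)\le (\lfloor nr\rfloor+1)\, a_{\lfloor nr\rfloor}.
\]
Taking $\frac{1}{n}\log$ of both sides reduces the problem to computing $\lim_{n\to\infty}\frac{1}{n}\log a_{\lfloor nr\rfloor}$. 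By Stirling's formula, $\frac{1}{n}\log C_n^{\lfloor nr\rfloor}\to -r\log r-(1-r)\log(1-r)$, and $\frac{1}{n}\log (N-1)^{\lfloor nr\rfloor}\to r\log(N-1)$. Adding the two limits yields exactly $r\log(N-1)-r\log r-(1-r)\log(1-r)$, and the polynomial factor $\lfloor nr\rfloor+1$ contributes nothing after dividing by $n$.

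The only step that is not entirely routine is verifying monotonicity of $a_k$ up to $\lfloor nr\rfloor$; this is where the explicit hypothesis $r<(N-2)/N$ enters (the slightly weaker bound $(N-1)/N$ would suffice in the limit, but $(N-2)/N$ gives a clean uniform gap for all large $n$). Everything else is bookkeeping with binomial coefficients and Stirling.
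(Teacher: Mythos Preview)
Your argument is correct and is the standard one: the paper does not give its own proof of this lemma but simply cites Katok \cite{katok1980lyapunov} and Sun \cite{sun2018}, where exactly this counting-plus-Stirling approach is carried out. Your identification of the dominant term via the ratio $a_{k+1}/a_k=(n-k)(N-1)/(k+1)$ and the role of the hypothesis $r<(N-2)/N$ in guaranteeing monotonicity on the full range $0\le k\le\lfloor nr\rfloor$ are precisely the points one finds in those references.
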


\begin{Lemma}\label{lemma4.6}
Suppose that $\mu$ is an ergodic measure of \( \phi_\tau \) for some constant $\tau>0$. Let \( \xi = \{ C_1, C_2, \ldots, C_N \} \) (\( N \geq 3 \)) be a finite measurable partition of \( M \) with \( \mu(\partial \xi) = 0 \). Then for any \( \delta \in (0, 1) \) and \( f \in C(M, \mathbb{R}) \), we have
\[
\lim_{\gamma \to 0} \liminf_{n \to \infty} \frac{1}{n} \log N_1^\mu(\delta, n\tau, \gamma, f, X) \geq \frac{1}{\tau} h_\mu(\phi_\tau, \xi) + \int f \, {\rm d}\mu .
\]
\end{Lemma}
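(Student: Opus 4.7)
The plan is to adapt Katok's classical lower bound for entropy to the rescaled setting with a potential. The key ingredients are Shannon--McMillan--Breiman (Lemma 4.2) for the partition refinement $\xi_n = \xi \vee \phi_{-\tau}\xi \vee \cdots \vee \phi_{-(n-1)\tau}\xi$, Birkhoff's ergodic theorem for $\phi_\tau$ applied both to $f$ and to the indicator of a rescaled neighborhood of $\partial \xi$, the combinatorial Hamming-ball bound of Lemma 4.5, and the bounded variation of $f$ along rescaled Bowen balls (Proposition 2.1, together with $B_1^* \subset B_2^*$). The new subtlety beyond the classical case is that the partition-boundary neighborhood must itself be rescaled by $\|X\|$, which is harmless because $\mu(\partial \xi \cup \mathrm{Sing}(X)) = 0$.

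Concretely, fix $\sigma > 0$ small and $\kappa \in (0, (N-2)/N)$ small. Writing $U_\gamma := \{z \in M\setminus\mathrm{Sing}(X) : d(z, \partial\xi) < \gamma \|X(z)\|\}$, one has $\mu(U_\gamma)\to 0$ as $\gamma \downarrow 0$; choose $\gamma_0 > 0$ with $\mu(U_{4\gamma_0}) < \kappa/2$. By Egoroff applied to the pointwise conclusions of SMB, of Birkhoff for $f$, and of Birkhoff for $\mathbf{1}_{U_{4\gamma_0}}$ (all under the ergodic transformation $\phi_\tau$), I would obtain a set $E \subset M \setminus \mathrm{Sing}(X)$ with $\mu(E) > 1 - \delta/2$ and $N_0$ such that, for every $y \in E$ and $n \geq N_0$,
\[
\mu(\xi_n(y)) \leq e^{-n(h_\mu(\phi_\tau, \xi) - \sigma)}, \qquad \int_0^{n\tau} f(\phi_s y)\,\mathrm{d}s \geq n\tau \Big(\int f\,\mathrm{d}\mu - \sigma\Big),
\]
and $\#\{0 \leq i < n : \phi_{i\tau}(y) \in U_{4\gamma_0}\} \leq \kappa n$. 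A further shrinkage of $\gamma \in (0, \gamma_0]$ via Proposition 2.1 gives the bounded-variation estimate $|\int_0^{n\tau}(f(\phi_s y) - f(\phi_s z))\,\mathrm{d}s| < \sigma n\tau$ for $y, z$ in a common $B_1^*$-ball and $n$ large.

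Given any $F \in \mathcal{F}_1(n\tau, \gamma)$, restrict to $F' = \{x \in F : B_1^*(x, n\tau, \gamma, X) \cap E \neq \emptyset\}$ and pick $y_x \in B_1^*(x, n\tau, \gamma, X) \cap E$ for each $x \in F'$. Using the Lipschitz comparison $\|X(w_2)\| \in [\|X(w_1)\|/2, 2\|X(w_1)\|]$ when $d(w_1, w_2) < c\|X(w_1)\|$, for sufficiently small $\gamma$ one obtains $B_1^*(x, n\tau, \gamma, X) \subset B_1^*(y_x, n\tau, 4\gamma, X)$. For any $z$ in this enlarged ball and any $i$ with $\phi_{i\tau}(y_x) \notin U_{4\gamma}$, the rescaled closeness forces $\phi_{i\tau}(z)$ into the same atom of $\xi$ as $\phi_{i\tau}(y_x)$; since $y_x \in E$ the bad indices number at most $\kappa n$, so Lemma 4.5 bounds the number of distinct $\xi_n$-codes in the enlarged ball by $P(N, n, \kappa)$. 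Each $\xi_n$-atom meeting $E$ has measure $\leq e^{-n(h_\mu(\phi_\tau, \xi) - \sigma)}$; the enlarged balls cover a set of measure $\geq \mu(E) - \delta$, so $|F'| \geq (\mu(E) - \delta)\,e^{n(h_\mu(\phi_\tau, \xi) - \sigma)}/P(N, n, \kappa)$. Bounded variation plus Birkhoff for $f$ at $y_x$ then yields $\int_0^{n\tau} f(\phi_s x)\,\mathrm{d}s \geq n\tau(\int f\,\mathrm{d}\mu - 2\sigma)$ for every $x \in F'$, and hence
\[
\sum_{x \in F} e^{\int_0^{n\tau} f(\phi_s x)\,\mathrm{d}s} \;\geq\; |F'|\,e^{n\tau(\int f\,\mathrm{d}\mu - 2\sigma)}.
\]
Taking $\log$, dividing by $n\tau$, and sending $n \to \infty$, then $\kappa \to 0$ (using Lemma 4.5(2) so that $\frac{1}{n}\log P(N, n, \kappa)\to 0$), then $\sigma \to 0$, produces the desired inequality.

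The main obstacle is keeping the rescaling uniform: the boundary layer $U_\gamma$ and the Bowen-ball radii both scale by $\|X\|$, and the crucial counting step must be run at the reference point $y_x \in E$ (which controls code-agreement via the boundary layer), not at $x$, which is not known to lie in the good set. The Lipschitz comparison $\|X(\phi_s x)\| \asymp \|X(\phi_s y_x)\|$ along the entire orbit is what makes the passage from $x$ to $y_x$ possible; once it is in hand, the remainder follows the standard Katok template.
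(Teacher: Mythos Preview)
Your proposal follows essentially the same Katok template as the paper's proof: Shannon--McMillan--Breiman for the atom measures, Birkhoff for the potential, a boundary-layer argument to control code mismatches, and the Hamming-ball count of Lemma~\ref{lemma4.5}. The tactical differences are minor. The paper uses a \emph{fixed} (unscaled) neighborhood $U$ of $\partial\xi$ and a short compactness argument showing that for sufficiently small $\gamma_0$, the condition $d(x,y)<\gamma_0\|X(x)\|$ forces $x,y$ into the same atom or both into $U$; your rescaled $U_\gamma$ achieves the same end by a different route, so your claim that the rescaling of the boundary layer is the ``new subtlety'' is not quite right---the paper shows a fixed neighborhood suffices. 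The paper also bounds the boundary-visit frequency by a direct Markov inequality valid for every $n$ (their Claim~1) rather than Birkhoff plus Egoroff, and it compares codes $\omega_y$ directly to $\omega_x$ without re-centering the Bowen ball at a good point $y_x$. None of these differences affects the structure or strength of the argument.

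One genuine slip to fix: with $\mu(E)>1-\delta/2$, your lower bound ``$\mu(E)-\delta$'' on the measure covered by the enlarged balls becomes $1-3\delta/2$, which is nonpositive for $\delta\geq 2/3$ and so gives no control on $|F'|$. The paper avoids this by choosing an auxiliary $\varepsilon<(1-\delta)/2$ and making each good set have measure $>1-\varepsilon/4$, so the intersection retains measure $>(1-\delta)/4$; you should do the same. Also, invoking Proposition~\ref{boundedvariation} is unnecessary here: since $B_1^*$ involves no reparametrization, uniform continuity of $f$ already gives $|f(\phi_s x)-f(\phi_s y)|<\sigma$ pointwise, which is all you need.
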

\begin{proof}
Let \( 0 < \varepsilon < \frac{1 - \delta}{2} \) be an arbitrarily given positive constant. Since \( \mu(\partial \xi) = 0 \), there exists a neighborhood \( U \) of \( \partial \xi \) such that \( \mu(U) < \frac{\varepsilon^2}{8} \). Denote by \( \chi_U \) the characteristic function of \( U \). For any \( n \in \mathbb{N} \), define
\[
B_{n, \varepsilon} = \left\{ x \in M : \frac{1}{n} \sum_{i=0}^{n-1} \chi_U(\phi_{i\tau}(x)) < \frac{\varepsilon }{2} \right\}.
\]
\noindent
\textbf{Claim 1. } \(\mu(B_{n, \varepsilon}) > 1 - \frac{\varepsilon}{4} \).

\bigskip

\noindent
\begin{proof} Note that \( \int_M \chi_U(x) \, {\rm d}\mu = \mu(U) < \frac{\varepsilon^2}{8} \). Since \( \mu \) is an invariant measure for \( \phi_\tau \), we have
\[
\int_M \chi_U ( \phi_{i\tau }(x)) \, {\rm d}\mu = \int_M \chi_U(x) \, {\rm d}\mu < \frac{\varepsilon^2}{8}, \quad \forall\, 0 \leq i \leq n-1.
\]
Thus
\[
\sum_{i=0}^{n-1} \int_M \chi_U( \phi_{i\tau }(x)) \, {\rm d}\mu < \frac{n }{8}\varepsilon^2.
\]
On the other hand
\begin{align*}
\sum_{i=0}^{n-1} \int_M \chi_U ( \phi_{i\tau }(x)) \, {\rm d}\mu &= \int_M \left( \sum_{i=0}^{n-1} \chi_U(\phi_{i\tau }(x)) \right) {\rm d}\mu\geq \int_{M \setminus B_{n, \varepsilon}} \left( \sum_{i=0}^{n-1} \chi_U(\phi_{i\tau }(x)) \right) {\rm d}\mu \\&\geq\int_{M \setminus B_{n, \varepsilon}} \frac{n\varepsilon}{2}{\rm d}\mu= \frac{n \varepsilon}{2} \mu(M \setminus B_{n, \varepsilon}).
\end{align*}
\noindent
Thus \( \frac{n\varepsilon}{2} \mu(M \setminus B_{n, \varepsilon}) < \frac{n}{8} \varepsilon^2 \), then \( \mu(M \setminus B_{n, \varepsilon}) < \frac{\varepsilon}{4} \).
\end{proof}

Also, for \( \mu \)-a.e. \( x \in M \)
\[
\lim_{n \to \infty} \frac{1}{n\tau} \int_0^{n\tau} f(\phi_{ t}(x)) \, {\rm d}t= \int f \, {\rm d}\mu.
\]

\noindent
By Egoroff's Theorem, there exists \( B \subseteq M \) such that \( \mu(B) > 1 - \frac{\varepsilon}{4} \) and \( \frac{1}{n\tau} \int_0^{n\tau} f(\phi_{ t}(x)) \, {\rm d}t \) converges uniformly to \( \int f \, {\rm d}\mu \) as \( n \to \infty \) on \( B \). That is, there exists \( N_1 \) such that for any \( n > N_1 \), we have
\[
\left| \frac{1}{n\tau} \int_0^{n\tau} f(\phi_{ t}(x)) \,{\rm d} t - \int f \, {\rm d}\mu \right| < \varepsilon
\]
for any \( x \in B \).

Let \( A_{n, r}^- = \left\{ A \in  \xi_n : \mu(A) < e^{-n(h_\mu(\phi_\tau, \xi) - \varepsilon)} \right\} \), where \( \xi_n = \xi \vee \phi_{\tau}^{-1} \xi \vee \cdots \vee \phi_{\tau}^{-(n-1)} \xi \). Similarly, by the Shannon-McMillan-Breiman Theorem, there exists \( N_2 \) such that for any \( n > N_2 \), we have
\[
\mu\left( \bigcup_{A \in A^-_{n, r}} A \right) > 1 - \frac{\varepsilon}{4}.
\]

Fix arbitrary $f\in C(M, \mathbb{R})$. We can take \( \gamma_0 > 0 \) such that if \( d(x, y) < \gamma_0 \|X(x)\| \), then the followings hold:
\begin{enumerate}
\item[1.]
    \(
    |f(x) - f(y)| < \varepsilon,
    \)
\item[2.]
    Either  \( \{x, y\} \subseteq U \) or  \( \{x, y\} \subseteq C_i \) for some \( i = 1, 2, \ldots, N \).
\end{enumerate}

The first item in above follows from the uniform continuity of \( f \). If there were no such \( \gamma_0 \) such that the second item holds, then we could take \( x_k, y_k \) containing in different \( C_i \) respectively, and \( x_k, y_k \) not both containing in \( U \), with \( d(x_k, y_k) \to 0 \). By the compactness of $M$, without loss of generality, assume that \( x_k, y_k \) converge (to the same point). Then their limit point is an element in \( \partial{\xi} \). When \( k \) is sufficiently large, \( x_k, y_k \) would both belong to \( U \), which is a contradiction.

Fix \( n > \max\{N_1, N_2\} \) , \( 0 < \gamma < \gamma_0 \). Assume that there exists a finite set \( F \subseteq M \setminus \text{Sing}(X) \) satisfying
\[
\mu\left( \bigcup_{x \in F} B_1^*(x, n\tau, \gamma, X) \right) > 1 - \delta.
\]
Then
\[
\mu\left( \left( \bigcup_{x \in F} B_1^*(x, n\tau, \gamma, X) \right) \cap \left( \bigcup_{A \in \mathcal{A}^-_{n, r}} A \right) \cap B_{n, \varepsilon} \cap B \right) > 1 - \delta - \frac{\varepsilon}{4} - \frac{\varepsilon}{4} - \frac{\varepsilon}{4}> \frac{1 - \delta}{4}.
\]

Let
\(
Q = \left( \bigcup\limits_{x \in F} B_1^*(x, n\tau, \gamma, X) \right) \cap \left( \bigcup\limits_{A \in A^-_{n, r}} A \right) \cap B_{n, \varepsilon} \cap B \), and set
\[
\mathcal{E} = \left\{ A \in \xi_{n} : A \cap Q \neq \emptyset \right\}.
\]
Then \( \mathcal{E} = \left\{ A \in A^-_{n,r} : A \cap \left( \bigcup\limits_{x \in F} B_1^*(x, n\tau, \gamma, X) \right) \cap B_{n,\varepsilon} \cap B \neq \emptyset \right\} \). For \( A \in \mathcal{E} \), since \( A \in {A}_{n,r}^- \), we have \( \mu(A) < e^{-n(h_\mu(\phi_\tau, \xi) - \varepsilon)} \), while \( \mu\left( \bigcup\limits_{A \in \mathcal{E}} A \right) > \mu(Q) > \frac{1 - \delta}{4} \). Thus we have
\[
\#\mathcal{E} > \frac{1 - \delta}{4} e^{n(h_\mu(\phi_\tau, \xi) - \varepsilon)}.
\]

For any \( x \in M \), there exists a unique map \( \omega_x: \{0, 1, \ldots, n - 1\} \to \{1, 2, \ldots, N\} \) such that \( \phi_{i\tau}(x)\in C_{\omega_x(i)} \). It is easy to see that if
\[
y \in \xi_n(x) = \xi(x) \cap \phi_{-\tau}(\xi(\phi_\tau(x))) \cap \cdots \cap \phi_{-(n-1)\tau}(\xi(\phi_{(n-1)\tau}(x))),
\]
then we have \( \omega_x = \omega_y \), where \( \xi(x) \) is the element in \( \xi \) containing \( x \).

Let \( F' = \left\{ x \in F : B_1^*(x, n\tau, \gamma, X) \cap B_{n,\varepsilon} \cap B \neq \emptyset \right\} \). For any \( x \in F' \), let
\[
\mathcal{E}_x = \left\{ A \in {A}_{n,r}^- : A \cap B_1^*(x, n\tau, \gamma, X) \cap B_{n,\varepsilon} \cap B \neq \emptyset \right\}. \tag{4}
\]
Then \( \mathcal{E} \subseteq \bigcup\limits_{x \in F'} \mathcal{E}_x \).

We have the following estimation for the number of elements in \( \mathcal{E}_x \).

\bigskip
\noindent
\(\textbf{Claim 2. }\) {\it For any \( x \in F' \), we have \( \#\mathcal{E}_x \leq P(N, n, \frac{\varepsilon}{2}) \).}

\begin{proof}
Let \( A \in \mathcal{E}_x \). Note that for any \( y, z \in A \), we  have \( \omega_y = \omega_z \), thus we can denote this common sequence as \( \omega_A \). Since \( A \in \mathcal{E}_x \), there exists \( y \in A \cap B_1^*(x, n\tau, \gamma, X) \cap B_{n, \varepsilon} \cap B \), and \( \omega_y = \omega_A \).

Since \( y \in B_1^*(x, n\tau, \gamma, X) \), then for all \( 0 \leq i \leq n - 1 \), we have
\[
d(\phi_{i\tau}(x), \phi_{i\tau}(y)) < \gamma \| X(\phi_{i\tau}(x)) \| < \gamma_0 \| X(\phi_{i\tau}(x)) \|.
\]
Thus, \( \phi_{i\tau}(x) \) and \( \phi_{i\tau}(y)\, (0\leq i\leq n-1)\) either both belong to the same element of \(\xi\) or both belong to \( U \). That is, if \( \omega_x(i) \neq \omega_y(i) \), then \( \chi_U(\phi_{i\tau}(x)) = \chi_U(\phi_{i\tau}(y)) = 1 \). Thus we have
\[
1 - \delta_{\omega_x(i), \omega_y(i)} \leq \chi_U(\phi_{i\tau}(x))
\]
for any $0\leq i\leq n-1$. Then
\[
\rho_{N,n}(\omega_x, \omega_y) = \frac{1}{n} \sum_{i = 0}^{n - 1} (1 - \delta_{\omega_x(i), \omega_y(i)}) \leq \frac{1}{n} \sum_{i = 0}^{n - 1} \chi_U(\phi_{i\tau}(x)) < \frac{\varepsilon}{2}.
\]
This proves that \( \rho_{N,n}(\omega_x, \omega_A) < \frac{\varepsilon}{2} \), and then \( \omega_A \in B\left( \omega_x, \frac{\varepsilon}{2}; N, n \right) \).

Therefore, when \(A\in \mathcal{E}_x \), we always have \( \omega_A \in B\left( \omega_x, \frac{\varepsilon}{2}; N, n \right) \). Note that for any $A, B\in \xi_n$, \( A=B \) if and only if \( \omega_A = \omega_B \) . We can see that \( \#\mathcal{E}_x \leq \# B\left( \omega_x, \frac{\varepsilon}{2}; N, n \right) = P\left( N, n, \frac{\varepsilon}{2} \right) \). This finishes the proof of the claim.
\end{proof}

By the claim, we have
\[
\frac{1 - \delta}{4} e^{n(h_\mu(\phi_\tau, \xi) - \varepsilon)} < \#\mathcal{E} = \#\left( \bigcup_{x \in F'} \mathcal{E}_x \right) \leq \# F' \cdot P\left( N, n, \frac{\varepsilon}{2} \right).
\]
Hence we have
\[
\# F' \geq \frac{1 - \delta}{4} e^{n(h_\mu(\phi_\tau, \xi) - \varepsilon)} \cdot P\left( N, n, \frac{\varepsilon}{2} \right)^{-1}.
\]

For any \( x \in F' \), since \( B_1^*(x, n\tau, \gamma, X) \cap B_{n, \varepsilon} \cap B \neq \emptyset \), there exists
\(
y \in B_1^*(x, n\tau, \gamma, X) \cap B_{n, \varepsilon} \cap B.
\)
For any $t\in[0, n\tau]$, because
\[
d(\phi_t(x), \phi_t(y)) < \gamma\| X(\phi_t(x)) \| < \gamma_0 \| X(\phi_t(x)) \|,
\]
we have \(
|f(\phi_t(x)) - f(\phi_t(y))| < \varepsilon.
\)

Note that for any \( y \in B \) and any \( n > N_1 \), we have
\[
\left| \frac{1}{n\tau} \int_0^{n\tau} f(\phi_t(y)) {\rm d}t - \int f {\rm d}\mu \right| < \varepsilon.
\]
Thus we have
\[
\begin{aligned}
&\left| \frac{1}{n\tau} \int_0^{n\tau} f(\phi_t(x)) {\rm d}t - \int f {\rm d}\mu \right| \\
\leq& \frac{1}{n\tau} \left( \left| \int_0^{n\tau} f(\phi_t(x)) {\rm d}t - \int_0^{n\tau} f(\phi_t(y)) {\rm d}t \right| + \left| \int_0^{n\tau} f(\phi_t(y)) {\rm d}t-  n\tau\int f {\rm d}\mu \right| \right)\\
<& 2\varepsilon.
\end{aligned}
\]
And then
\[
\int_0^{n\tau} f(\phi_t(x)) {\rm d}t> n\tau \left( \int f {\rm d}\mu - 2\varepsilon \right).
\]
This follows that
\[
\begin{aligned}
\sum_{x \in F} e^{\int_0^{n\tau} f(\phi_t(x)) {\rm d}t} &\geq \sum_{x \in F'} e^{\int_0^{n\tau} f(\phi_t(x)) {\rm d}t} \geq \sum_{x \in F'} e^{n\tau \left( \int f {\rm d}\mu - 2\varepsilon \right)} \geq \# F' \cdot e^{n\tau \left( \int f {\rm d}\mu - 2\varepsilon \right)} \\
&\geq \frac{1 - \delta}{4} e^{n(h_\mu(\phi_\tau, \xi) - \varepsilon)} \cdot P\left( N, n, \frac{\varepsilon}{2} \right)^{-1} \cdot e^{n\tau \left( \int f {\rm d}\mu - 2\varepsilon \right)}.
\end{aligned}
\]

This proves that if \( n > \max\{N_1, N_2\}, \,0<\gamma<\gamma_0 \), then we have
\[
N_1^\mu(\delta, n\tau, \gamma, X, f) \geq \frac{1 - \delta}{4} e^{n(h_\mu(\phi_\tau, \xi) - \varepsilon)} P\left( N, n, \frac{\varepsilon}{2} \right)^{-1} e^{n\tau \left( \int f {\rm d}\mu - 2\varepsilon \right)}.
\]

\noindent
It follows that
\[
\begin{aligned}
&\liminf_{n \to \infty} \frac{1}{n\tau} \log N_1^\mu(\delta, n\tau, \gamma, X, f) \\
\geq& \liminf_{n \to \infty} \frac{1}{n\tau} \left( n(h_\mu(\phi_\tau, \xi) - \varepsilon) + n\tau \left( \int f {\rm d}\mu -2\varepsilon \right)+\log(1 - \delta) - \log 4 - \log P\left( N, n, \frac{\varepsilon}{2} \right) \right) \\
=& \frac{1}{\tau} h_\mu(\phi_\tau, \xi) + \int f {\rm d}\mu - \frac{\varepsilon}{\tau} - 2\varepsilon -\frac{1}{\tau} \left(\frac{\varepsilon}{2} \log(N - 1) - \frac{\varepsilon}{2} \log \frac{\varepsilon}{2} - \left( 1 - \frac{\varepsilon}{2} \right) \log \left( 1 - \frac{\varepsilon}{2} \right)\right).
\end{aligned}
\]

\noindent
Thus
\[
\begin{aligned}
\lim_{\gamma\to 0} \liminf_{n \to \infty} \frac{1}{n\tau} \log N_1^\mu(\delta, n\tau, \gamma, X, f) &\geq \frac{1}{\tau} h_\mu(\phi_\tau, \xi) + \int f {\rm d}\mu - \frac{\varepsilon}{\tau} - 2\varepsilon -\frac{\varepsilon}{2\tau} \log(N - 1) \\
&+ \frac{\varepsilon}{2\tau} \log \frac{\varepsilon}{2} +\frac{1}{\tau} \left( 1 - \frac{\varepsilon}{2} \right) \log \left( 1 - \frac{\varepsilon}{2} \right).
\end{aligned}
\]

\noindent
By letting \( \varepsilon \to 0 \), we can get
\[
\lim_{\gamma\to 0} \liminf_{n \to \infty} \frac{1}{n\tau} \log N_1^\mu(\delta, n\tau, \gamma, X, f) \geq \frac{1}{\tau} h_\mu(\phi_\tau, \xi) + \int f {\rm d}\mu.
\]
This finishes the proof of the lemma.
\end{proof}

Note that \( h_\mu(\phi_\tau) = \sup\limits_\xi h_\mu(\phi_\tau, \xi) \), where \( \xi \) is a finite measurable partition with \( \mu(\partial \xi) = 0 \). Without loss of generality, assume \( \#\xi \geq 3 \). Applying Lemma \ref{lemma4.3} and taking the supremum over all \( \xi \), we have the following: for any \( 0 < \delta < 1 \), one has
\[
\lim_{\varepsilon \to 0} \liminf_{t \to \infty} \frac{1}{t} \log N_1^\mu(\delta, t, \varepsilon, X, f) \geq h_\mu(\phi_1) + \int f {\rm d}\mu.
\]
Assuming that $\mu$ is an ergodic invariant measure of $\phi$ with $\mu({\rm Sing}(X)=0$ and $\log\|X(x)\|$ is integrable, then by Lemma \ref{lemma4.4}, we can see that for any $\delta\in(0,1)$,
\[
\lim_{\varepsilon \to 0} \liminf_{t \to \infty} \frac{1}{t} \log N_1^\mu(\delta, t, \varepsilon, X, f) = h_\mu(\phi_1) + \int f {\rm d}\mu=\lim_{\varepsilon \to 0} \limsup_{t \to \infty} \frac{1}{t} \log N_1^\mu(\delta, t, \varepsilon, X, f).
\]
Combining with Theorem \ref{mainthm1}, we get the conclusions of Theorem \ref{mainthm2} and complete the proof of Theorem \ref{mainthm2}.

\section{Rescaled Topological Pressure }\label{sec:tp}
In this section, we prove Theorem \ref{theorem 1.3} and Theorem \ref{theorem1.4:vp}. Similar to Lemma \ref{lemma3.2}, we can prove the following lemma by using Lemma \ref{lemma3.1}.

\begin{Lemma}\label{lemma5.1}
Let $f$ be a continuous function. We have $P_2^*(X, f)=P_3^*(X, f)$ and $Q_2^*(X, f)=Q_3^*(X ,f)$.
\end{Lemma}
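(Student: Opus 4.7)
The plan is to mimic the proof of Lemma \ref{lemma3.2}, feeding Lemma \ref{lemma3.1} into the definitions of the rescaled topological pressure via spanning and separating sets, and controlling the resulting time-shift in the Birkhoff integral by the uniform bound $\|f\|$.

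Fix a compact set $K\subset M\setminus\Sing(X)$, a continuous function $f$, an arbitrary $\lambda\in(0,1)$ and a constant $b\in(0,r_0)$. By Lemma \ref{lemma3.1}, there is $\varepsilon_0>0$ such that for all $\varepsilon\in(0,\varepsilon_0]$, $x\in M\setminus\Sing(X)$ and $t>b$ we have the two inclusions
\[
B_3^*(x,t,\varepsilon,X)\subseteq B_2^*(x,(1-\lambda)t,\varepsilon,X),\qquad B_2^*(x,t,\varepsilon,X)\subseteq B_3^*(x,(1-\lambda)t,\varepsilon,X).
\]
For the spanning-set pressures, the first step is to observe that if $F$ is a rescaled $2$-$(t,\varepsilon,K)$-spanning set, then by the second inclusion it is automatically a rescaled $3$-$((1-\lambda)t,\varepsilon,K)$-spanning set. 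Since $\bigl|\int_0^{(1-\lambda)t}f(\phi_s x)\,{\rm d}s - \int_0^t f(\phi_s x)\,{\rm d}s\bigr|\le \lambda t\,\|f\|$, this yields
\[
N_{3,(1-\lambda)t}^*(X,f,\varepsilon,K)\le e^{\lambda t\|f\|}\,N_{2,t}^*(X,f,\varepsilon,K).
\]
Taking $\frac{1}{t}\log$ and $\limsup_{t\to\infty}$ produces $(1-\lambda)P_3^*(X,f,\varepsilon,K)\le \lambda\|f\|+P_2^*(X,f,\varepsilon,K)$. Letting $\varepsilon\to 0$, taking the supremum over $K$, and finally sending $\lambda\to 0$ gives $P_3^*(X,f)\le P_2^*(X,f)$. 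The reverse inequality is obtained by interchanging the roles of $B_2^*$ and $B_3^*$ via the first inclusion.

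For the separating-set quantities $Q_i^*$, the direction of the inclusion argument flips. If $E$ is a rescaled $2$-$((1-\lambda)t,\varepsilon,K)$-separating set then, using $B_3^*(x,t,\varepsilon,X)\subseteq B_2^*(x,(1-\lambda)t,\varepsilon,X)$ and the fact that $x\in B_3^*(x,t,\varepsilon,X)$, one sees that $E$ is a rescaled $3$-$(t,\varepsilon,K)$-separating set. Hence
\[
Z_{2,(1-\lambda)t}^*(X,f,\varepsilon,K)\le e^{\lambda t\|f\|}\,Z_{3,t}^*(X,f,\varepsilon,K),
\]
and the same limit-taking procedure (first $\limsup_{t\to\infty}\frac{1}{t}\log$, then $\varepsilon\to 0$, then $\sup_K$, then $\lambda\to 0$) gives $Q_2^*(X,f)\le Q_3^*(X,f)$; the opposite inequality is obtained symmetrically from the companion inclusion.

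I do not expect any serious obstacle: all of the ingredients are already in place. The only point that requires a bit of care is the book-keeping around $\varepsilon_0$ and $\lambda$ (in particular, that $\varepsilon_0$ in Lemma \ref{lemma3.1} may depend on $\lambda$ and $b$ but not on $x$, $t$ or $K$), which is why one has to pass to the limit in $\varepsilon$ before sending $\lambda\to 0$. The adjustment of the Birkhoff integral by the uniform error $\lambda t\|f\|$ is exactly the same trick used in Lemma \ref{lemma3.2}, and it disappears in the limit $\lambda\to 0$.
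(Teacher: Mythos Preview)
Your proposal is correct and follows essentially the same approach as the paper's own proof: both feed the inclusions from Lemma \ref{lemma3.1} into the spanning-set and separating-set definitions, absorb the time shift $(1-\lambda)t\leftrightarrow t$ via the uniform bound $\lambda t\|f\|$ on the Birkhoff integral, and then pass to the limits $t\to\infty$, $\varepsilon\to 0$, $\sup_K$, $\lambda\to 0$ in that order. The only cosmetic difference is which of the two symmetric inclusions you start with for the $Q_i^*$ part, and your remark that $\varepsilon_0$ depends on $\lambda$ (hence the order of limits) is exactly the care the argument requires.
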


\begin{proof}
Let $K\subset M\setminus{\rm Sing}(X)$ be a compact set. Denote by $\mathcal{G}_i{(t, K)} $ the set $  \{ F\subset K : F \text{ is a finite rescaled i-}\\(t, \varepsilon, K)  \text{-} \text{spanning} \text{ set} \}$ and $\mathcal{H}_i{(t, K)} = \{ E\subset K : E \text{ is a finite rescaled i-} (t, \varepsilon, K)\text{-separating set} \}$. Given any $\lambda > 0$, if we choose $\varepsilon$ sufficiently small and $t$ sufficiently large, then we can check that
\(
\mathcal{G}_3((1-\lambda)t, K) \supset \mathcal{G}_2(t, K)
\)  and \(
\mathcal{H}_3((1-\lambda)t, K) \subset \mathcal{H}_2(t, K)
\) by the fact that $B_2^*(x, t, \varepsilon, X) \subseteq B_3^*(x, (1 - \lambda)t, \varepsilon, X) (\forall x\in M\setminus{\rm Sing}(X))$.
Then we can get
\[
N_{2,t}^*(X, f, \varepsilon, K) \geq e^{-\lambda t \|f\|} N_{3,(1-\lambda)t}^*(X, f, \varepsilon, K),
\]
\[
Z_{2,t}^*(X, f, \varepsilon, K) \geq e^{-\lambda t \|f\|} Z_{3,(1-\lambda)t}^*(X, f, \varepsilon, K),
\]
and
\[
\frac{1}{t} \log N_{2,t}^*(X, f, \varepsilon, K) \geq -\lambda \|f\| + \frac{1}{t} \log N_{3,(1-\lambda)t}^*(X, f, \varepsilon, K),
\]
\[
\frac{1}{t} \log Z_{2,t}^*(X, f, \varepsilon, K) \geq -\lambda \|f\| + \frac{1}{t} \log Z_{3,(1-\lambda)t}^*(X, f, \varepsilon, K).
\]
Taking limits $t\to+\infty$ and $\varepsilon\to 0^+$, and letting $\lambda \to 0$, we have
\[
\lim_{\varepsilon \to 0} \limsup_{t \to \infty} \frac{1}{t} \log N_{2,t}^*(X, f, \varepsilon, K) \geq \lim_{\varepsilon \to 0} \limsup_{t \to \infty} \frac{1}{t} \log N_{3,t}^*(X, f, \varepsilon, K),
\]
\[
\lim_{\varepsilon \to 0} \liminf_{t \to \infty} \frac{1}{t} \log Z_{2,t}^*(X, f, \varepsilon, K) \geq \lim_{\varepsilon \to 0} \liminf_{t \to \infty} \frac{1}{t} \log Z_{3,t}^*(X, f, \varepsilon, K).
\]
We can proceed from the fact that $B_3^*(x, t, \varepsilon, X) \subseteq B_2^*(x, (1 - \lambda)t, \varepsilon, X) (\forall x\in M\setminus{\rm Sing}(X))$ to prove the reverse inequality and get
\[
\lim_{\varepsilon \to 0} \limsup_{t \to \infty} \frac{1}{t} \log N_{2,t}^*(X, f, \varepsilon, K) =\lim_{\varepsilon \to 0} \limsup_{t \to \infty} \frac{1}{t} \log N_{3,t}^*(X, f, \varepsilon, K),
\]
\[
\lim_{\varepsilon \to 0} \liminf_{t \to \infty} \frac{1}{t} \log Z_{2,t}^*(X, f, \varepsilon, K) = \lim_{\varepsilon \to 0} \liminf_{t \to \infty} \frac{1}{t} \log Z_{3,t}^*(X, f, \varepsilon, K).
\]
By taking the supremum over all compact subsets $K\subset M\setminus{\rm Sing}(x)$ in the above equation, we obtain the equalities $P_2^*(X, f) = P_3^*(X, f)$ and $Q_2^*(X, f) = Q_3^*(X, f)$. This finishes the proof of the lemma.
\end{proof}

We say that a rescaled i-$(t, \varepsilon, K)$-separating set $E$ is {\it maximal} if $E\cup\{x\}$ is not a rescaled i-$(t, \varepsilon, K)$-separating set for any $x\in K\setminus E$. By the compactness of $K$, it is easy to see that every i-$(t, \varepsilon, K)$-separating set can be extended to be a finite maximal i-$(t, \varepsilon, K)$-separating set. Thus, we have
$$Z_{i,t}^*(X, f, \varepsilon,K) = \sup\left\{ \sum_{x \in E} e^{\int_0^t f(\phi_s (x)) {\rm d}s} : E \text{ is a maximal rescaled i-}(t,\varepsilon,K)\text{-separating set} \right\}$$
for any $i=1,2,3$.

\begin{Lemma}\label{lemma5.2}
Let $\varepsilon\in(0, c)$. If $E$ is a maximal rescaled $1$-$(t, \varepsilon/2, K)$-separating set, then $E$ is a $1$-$(t, \varepsilon, K)$-spanning set.
\end{Lemma}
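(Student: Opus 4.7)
The plan is to verify the spanning property pointwise: pick an arbitrary $y \in K$ and show $y \in B_1^*(x, t, \varepsilon, X)$ for some $x \in E$. If $y \in E$ then $y \in B_1^*(y, t, \varepsilon, X)$ trivially, so assume $y \in K \setminus E$. By the maximality of $E$ as a rescaled $1$-$(t, \varepsilon/2, K)$-separating set, $E \cup \{y\}$ fails to be a separating set, meaning that for some $p \in E \cup \{y\}$ the intersection $B_1^*(p, t, \varepsilon/2, X) \cap (E \cup \{y\})$ contains a second point. Since $E$ itself is already separating, this new violation must involve $y$, and it breaks into two cases.

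In the first case there exists $x \in E$ with $y \in B_1^*(x, t, \varepsilon/2, X)$. Then since $\varepsilon/2 < \varepsilon$, we immediately have $y \in B_1^*(x, t, \varepsilon, X)$ and we are done.

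In the second case there exists $x \in E$ with $x \in B_1^*(y, t, \varepsilon/2, X)$, that is
\[
d(\phi_s(x), \phi_s(y)) < \tfrac{\varepsilon}{2}\|X(\phi_s(y))\|, \qquad \forall\, 0 \leq s \leq t.
\]
Here the rescaling factor is attached to $\phi_s(y)$, not $\phi_s(x)$, so we cannot directly conclude $y \in B_1^*(x, t, \varepsilon, X)$. To bridge this asymmetry we invoke the Lipschitz comparability stated just before Lemma \ref{lemma3.3}: for any $u,v \in M \setminus \text{Sing}(X)$, the condition $d(u,v) < c\|X(u)\|$ forces $\tfrac{1}{2}\|X(u)\| \leq \|X(v)\| \leq 2\|X(u)\|$. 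Because $\varepsilon < c$ implies $\varepsilon/2 < c$, applying this with $u = \phi_s(y)$ and $v = \phi_s(x)$ yields $\|X(\phi_s(y))\| \leq 2\|X(\phi_s(x))\|$ for every $s \in [0, t]$. Substituting this back gives
\[
d(\phi_s(x), \phi_s(y)) < \tfrac{\varepsilon}{2}\|X(\phi_s(y))\| \leq \varepsilon\|X(\phi_s(x))\|, \qquad \forall\, 0 \leq s \leq t,
\]
so $y \in B_1^*(x, t, \varepsilon, X)$ as required.

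The only genuinely non-routine step is handling the asymmetric case. The rescaled Bowen balls are not symmetric in the two arguments, which is the price of rescaling by the flow speed; here it is precisely the Lipschitz equivalence of $\|X(\cdot)\|$ on scales smaller than $c\|X(\cdot)\|$ that restores enough symmetry, with the factor $2$ loss absorbed exactly by doubling $\varepsilon/2$ to $\varepsilon$. This matches the hypothesis $\varepsilon \in (0, c)$ in the statement and confirms that the constant $c$ from the earlier Lipschitz estimate is the correct threshold for the lemma.
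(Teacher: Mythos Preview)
Your proof is correct and follows essentially the same approach as the paper: both split into the two asymmetric cases arising from maximality and use the Lipschitz comparability of $\|X(\cdot)\|$ (valid since $\varepsilon/2<c$) to convert the inequality $d(\phi_s(x),\phi_s(y))<\tfrac{\varepsilon}{2}\|X(\phi_s(y))\|$ into $d(\phi_s(x),\phi_s(y))<\varepsilon\|X(\phi_s(x))\|$. Your write-up is simply more explicit about why the separating violation must involve $y$ and about how the constant $c$ enters.
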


\begin{proof}
Let $E$ be a maximal rescaled 1-$(t, \varepsilon/2, K)$-separating set and $y\in K\setminus E$. We will show that $y\in\bigcup\limits_{x\in E}B_1^*(x, t, \varepsilon, X)$. Since $E\cup\{y\}$ is not a 1-$(t, \varepsilon/2, K)$-separating set, we can find $x\in E$ such that either $y\in B_1^*(x, t, \varepsilon/2, X)$ or $x\in B_1^*(y, t, \varepsilon/2, X)$. If $y\in B_1^*(x, t, \varepsilon/2, X)$, then $y\in\bigcup\limits_{x\in E}B_1^*(x, t, \varepsilon, X)$ obviously. If $x\in B_1^*(y, t, \varepsilon/2, X)$, we have
$$d(\phi_s(x),\phi_s(y))<\frac{\varepsilon}{2}\|X(\phi_s(y))\|\leq\varepsilon\|X(\phi_s(x))\|$$
for any $s\in[0,t]$. Hence $y\in B_1^*(x, t, \varepsilon, X)$ and then $y\in\bigcup\limits_{x\in E}B_1^*(x, t, \varepsilon, X)$. This proves that $y\in\bigcup\limits_{x\in E}B_1^*(x, t, \varepsilon, X)$ when $y\in K\setminus E$, and then $E$ is a 1-$(t, \varepsilon, K)$-spanning set.
\end{proof}

\begin{Lemma}\label{lemma5.3}
For any $\lambda\in(0,1)$ and $b\in(0, r_0)$, there is $\varepsilon_0>0$ such that for any $\varepsilon\in(0, \varepsilon_0)$ and any $t>b$ and compact subset $K\subset M\setminus{\rm Sing}(X)$, if $E$ is a maximal rescaled $2$-$(t, \varepsilon/2, K)$-separating set, then $E$ is a $2$-$((1-\lambda)t, \varepsilon, K)$-spanning set.
\end{Lemma}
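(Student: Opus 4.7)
The plan is to adapt the argument of Lemma \ref{lemma5.2} to the $B_2^*$ setting. The main new difficulty is the asymmetry of $B_2^*$: the rescaling factor $\|X(\phi_{\alpha(s)}(x))\|$ depends on the center $x$, so when maximality pairs $y \notin E$ with some $x \in E$ ``from the wrong side,'' we must invert the witnessing reparametrization and compare the scales $\|X(\phi_{\alpha(s)}(y))\|$ and $\|X(\phi_s(x))\|$.

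To set up constants, let $c > 0$ be the Lipschitz constant recalled just before Lemma \ref{lemma3.3}, so that $d(u,v) < c\|X(u)\|$ forces $\|X(u)\|/2 \leq \|X(v)\| \leq 2\|X(u)\|$. Given $\lambda \in (0,1)$ and $b \in (0, r_0)$, apply Lemma \ref{controloftimereparametrization} with these parameters to obtain $\varepsilon_1 > 0$, and set $\varepsilon_0 = \min\{\varepsilon_1, c\}$. Now fix $\varepsilon \in (0,\varepsilon_0)$, $t > b$, a compact $K \subset M \setminus \Sing(X)$, and a maximal rescaled $2$-$(t, \varepsilon/2, K)$-separating set $E$. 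For any $y \in K \setminus E$, maximality of $E$ yields some $x \in E$ with $x \neq y$ for which either (A) $y \in B_2^*(x, t, \varepsilon/2, X)$, or (B) $x \in B_2^*(y, t, \varepsilon/2, X)$.

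Case (A) is immediate: restricting the witnessing $\alpha \in \Rep([0,t])$ to $[0, (1-\lambda)t]$ places $y$ in $B_2^*(x, (1-\lambda)t, \varepsilon/2, X) \subset B_2^*(x, (1-\lambda)t, \varepsilon, X)$. The main obstacle is case (B). Let $\alpha \in \Rep([0,t])$ be a witness, so that
\[
d(\phi_{\alpha(s)}(y), \phi_s(x)) < \frac{\varepsilon}{2}\|X(\phi_{\alpha(s)}(y))\|, \qquad \forall\, s \in [0, t].
\]
Since $\varepsilon < \varepsilon_1$ and $t > b$, Lemma \ref{controloftimereparametrization} yields $|\alpha(t) - t| \leq \lambda t$, in particular $\alpha(t) \geq (1-\lambda)t$. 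Hence $\beta := \alpha^{-1}$ is a reparametrization defined on $[0, \alpha(t)] \supset [0, (1-\lambda)t]$. Substituting $u = \alpha(s)$ gives
\[
d(\phi_u(y), \phi_{\beta(u)}(x)) < \frac{\varepsilon}{2}\|X(\phi_u(y))\|, \qquad \forall\, u \in [0,(1-\lambda)t].
\]
Because $\varepsilon/2 < c$, the Lipschitz comparison supplied by $c$ gives $\|X(\phi_u(y))\| \leq 2\|X(\phi_{\beta(u)}(x))\|$, upgrading the inequality to
\[
d(\phi_{\beta(u)}(x), \phi_u(y)) < \varepsilon \|X(\phi_{\beta(u)}(x))\|, \qquad \forall\, u \in [0,(1-\lambda)t],
\]
which is precisely $y \in B_2^*(x, (1-\lambda)t, \varepsilon, X)$. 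Therefore $K \subset \bigcup_{x \in E} B_2^*(x, (1-\lambda)t, \varepsilon, X)$, i.e.\ $E$ is a $2$-$((1-\lambda)t, \varepsilon, K)$-spanning set.

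The only delicate ingredients are that $\alpha(t) \geq (1-\lambda)t$, which is exactly the strength of Lemma \ref{controloftimereparametrization}, and that swapping the rescaling base from $\phi_u(y)$ to $\phi_{\beta(u)}(x)$ costs only a factor of $2$, which is ensured by choosing $\varepsilon_0 \leq c$; these together absorb the asymmetry of $B_2^*$ and let the argument of Lemma \ref{lemma5.2} go through after the loss of a factor $(1-\lambda)$ in time.
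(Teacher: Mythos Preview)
Your proof is correct and follows essentially the same approach as the paper: in both arguments, maximality yields two cases, the first is trivial, and in the second you invert the witnessing reparametrization, use Lemma~\ref{controloftimereparametrization} to ensure $\alpha(t)\geq (1-\lambda)t$, and use the constant $c$ to transfer the rescaling factor from $\phi_u(y)$ to $\phi_{\beta(u)}(x)$ at the cost of a factor~$2$. The only cosmetic difference is that the paper phrases ``$\varepsilon_0<c$'' as a WLOG assumption rather than explicitly writing $\varepsilon_0=\min\{\varepsilon_1,c\}$.
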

\begin{proof}
Let $\lambda\in(0,1)$ and $b\in(0, r_0)$ be given. By Lemma \ref{controloftimereparametrization}, we can find $\varepsilon_0>0$ such that for any $x\in M\setminus{\rm Sing}(X)$, $t>b$ and any $\alpha\in {\rm Rep}[0, t]$, if
$$d(\phi_{\alpha(s)}(x), \phi_s(y))<\varepsilon_0\|X(\phi_{\alpha(s)}(x))\|$$
holds for every $s\in[0, t]$, then $|\alpha(s)-s| \leq \lambda s$ for any $s\in[b, t]$. Without loss of generality, we assume that $0<\varepsilon_0<c$.

Now let $\varepsilon\in(0, \varepsilon_0)$ and $K\subset M\setminus{\rm Sing}(X)$ be a compact set. Suppose that $E$ is a maximal rescaled 2-$(t, \varepsilon/2, K)$-separating set. Then for any $y\in K\setminus E$, there is $x\in E$ such that either $y\in B_2^*(x, t, \varepsilon/2, X)$ or $x\in B_2^*(y, t, \varepsilon/2, X)$. If $y\in B_2^*(x, t, \varepsilon/2, X)$ then it is easy to see that
$$y\in B_2^*(x, t, \varepsilon, X)\subset B_2^*(x, (1-\lambda)t, \varepsilon, X)\subset\bigcup_{x\in E} B_2^*(x, (1-\lambda)t, \varepsilon, X).$$
If $x\in B_2^*(y, t, \varepsilon/2, X)$, then there exists a time reparametrization $\alpha\in {\rm Rep}[0, t]$ such that
$$d(\phi_{\alpha(s)}(y), \phi_s(x))<\frac{\varepsilon}{2}\|X(\phi_{\alpha(s)}(y))\|$$
for any $s\in[0,t]$. By the choice of $\varepsilon_0$, we have $\alpha(t)\geq (1-\lambda)t$. Denote by $\beta=\alpha^{-1}$, then we have $\beta\in {\rm Rep}[0, (1-\lambda)t]$ and
$$d(\phi_s(y), \phi_{\beta(s)}(x))<\frac{\varepsilon}{2}\|X(\phi_s(y))\|\leq \varepsilon\|X(\phi_{\beta(s)}(x))\|$$
for any $s\in[0, (1-\lambda)t]$. This proves that
$$y\in B_2^*(x, (1-\lambda)t, \varepsilon, X)\subset \bigcup_{x\in E} B_2^*(x, (1-\lambda)t, \varepsilon, X).$$ This proves that in both cases we have
$$y\in\bigcup_{x\in E} B_2^*(x, (1-\lambda)t, \varepsilon, X),$$
and finishes the proof of the lemma.
\end{proof}

\begin{Lemma}\label{lemma5.4}
Let $f\in C(M, \mathbb{R})$ be given. We have $P_2^*(X,f)\leq P_1^*(X, f)\leq Q_1^*(X, f)$ and $P_2^*(X, f)\leq Q_2^*(X, f)\leq Q_1^*(X,f)$.
\end{Lemma}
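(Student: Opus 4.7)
The plan is to deduce all four inequalities from two basic facts: the set inclusion $B_1^*(x,t,\varepsilon,X)\subseteq B_2^*(x,t,\varepsilon,X)$ (the identity reparametrization is always in $\mathrm{Rep}[0,t]$), and the spanning/separating relations provided by Lemmas \ref{lemma5.2} and \ref{lemma5.3}. I will deal first with the three ``easy'' inequalities that require no time reparametrization loss, and then handle $P_2^*(X,f)\leq Q_2^*(X,f)$, where the $(1-\lambda)t$ appearing in Lemma \ref{lemma5.3} forces an extra passage to the limit.

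For $P_2^*(X,f)\leq P_1^*(X,f)$, the inclusion $B_1^*\subseteq B_2^*$ shows that every rescaled $1$-$(t,\varepsilon,K)$-spanning set is automatically a rescaled $2$-$(t,\varepsilon,K)$-spanning set, so $N_{2,t}^*(X,f,\varepsilon,K)\leq N_{1,t}^*(X,f,\varepsilon,K)$; taking $\limsup_{t\to\infty}\frac1t\log$, then $\varepsilon\to 0$, then $\sup_K$ gives the inequality. For $Q_2^*(X,f)\leq Q_1^*(X,f)$, observe that if $E$ is a rescaled $2$-$(t,\varepsilon,K)$-separating set then $B_1^*(x,t,\varepsilon,X)\cap E\subseteq B_2^*(x,t,\varepsilon,X)\cap E=\{x\}$ for every $x\in E$, so $E$ is also a rescaled $1$-$(t,\varepsilon,K)$-separating set; hence $Z_{2,t}^*(X,f,\varepsilon,K)\leq Z_{1,t}^*(X,f,\varepsilon,K)$ and the same limiting procedure finishes the job. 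For $P_1^*(X,f)\leq Q_1^*(X,f)$, use Lemma \ref{lemma5.2}: if $E$ is a maximal rescaled $1$-$(t,\varepsilon/2,K)$-separating set then $E$ is a rescaled $1$-$(t,\varepsilon,K)$-spanning set, so
\[
N_{1,t}^*(X,f,\varepsilon,K)\ \leq\ \sum_{x\in E}e^{\int_0^t f(\phi_s x)\,{\rm d}s}\ \leq\ Z_{1,t}^*(X,f,\varepsilon/2,K),
\]
and taking $\limsup_{t\to\infty}\frac{1}{t}\log$ followed by $\varepsilon\to0$ and $\sup_K$ yields $P_1^*\leq Q_1^*$.

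The main obstacle is $P_2^*(X,f)\leq Q_2^*(X,f)$, because Lemma \ref{lemma5.3} turns a maximal rescaled $2$-$(t,\varepsilon/2,K)$-separating set $E$ only into a rescaled $2$-$((1-\lambda)t,\varepsilon,K)$-spanning set, so the time is shrunk by a factor $(1-\lambda)$. My plan is as follows. Fix $\lambda\in(0,1)$, pick $\varepsilon_0$ from Lemma \ref{lemma5.3}, and for $0<\varepsilon<\varepsilon_0$ and any compact $K\subset M\setminus\mathrm{Sing}(X)$ estimate, using $\bigl|\int_{(1-\lambda)t}^{t} f(\phi_s x)\,{\rm d}s\bigr|\leq \lambda t\,\|f\|$,
\[
N_{2,(1-\lambda)t}^*(X,f,\varepsilon,K)\ \leq\ \sum_{x\in E}e^{\int_0^{(1-\lambda)t}f(\phi_s x)\,{\rm d}s}\ \leq\ e^{\lambda t\|f\|}\,Z_{2,t}^*(X,f,\varepsilon/2,K).
\]
Taking $\frac{1}{(1-\lambda)t}\log$ and then $\limsup_{t\to\infty}$ gives
\[
P_2^*(X,f,\varepsilon,K)\ \leq\ \frac{\lambda}{1-\lambda}\,\|f\|\ +\ \frac{1}{1-\lambda}\,Q_2^*(X,f,\varepsilon/2,K).
\]
Letting $\varepsilon\to 0$ and then $\sup_K$ yields $P_2^*(X,f)\leq \tfrac{\lambda}{1-\lambda}\|f\|+\tfrac{1}{1-\lambda}Q_2^*(X,f)$, and finally $\lambda\to 0$ produces the desired bound $P_2^*(X,f)\leq Q_2^*(X,f)$. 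Combining the four inequalities established above completes the proof.
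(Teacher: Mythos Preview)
Your proposal is correct and follows essentially the same approach as the paper's proof: both use the inclusion $B_1^*\subseteq B_2^*$ for the easy inequalities, Lemma \ref{lemma5.2} for $P_1^*\leq Q_1^*$, and Lemma \ref{lemma5.3} together with the $\lambda\to 0$ trick for $P_2^*\leq Q_2^*$. Your bookkeeping of the $\varepsilon$ versus $\varepsilon/2$ parameters and the normalization by $(1-\lambda)t$ is in fact slightly cleaner than the paper's own write-up.
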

\begin{proof}
Fix a compact subset $K\subset M\setminus{\rm Sing}(X)$. Since
\[
B_1^*(x, t, \varepsilon, X) \subseteq B_2^*(x, t, \varepsilon, X)
\]
for any \( x \in M \setminus \text{Sing}(X) \), \( t > 0 \) and \( \varepsilon > 0 \), we know that every 1-$(t, \varepsilon, K)$-spanning set is 2-$(t, \varepsilon, K)$-spanning and every 2-$(t, \varepsilon, K)$-separating set is 1-$(t, \varepsilon, K)$-separating. Hence we have $N_{1, t}^*(X, f, \varepsilon, K)\geq N_{2, t}^*(X, f, \varepsilon, K)$ and $Z
_{1, t}^*(X, f, \varepsilon, K)\geq Z_{2, t}^*(X, f, \varepsilon, K)$. Then by the definitions we can get $P_1^*(X, f)\geq P_2^*(X, f)$  and $Q_1^*(X, f)\geq Q_2^*(X, f)$ directly.

For every compact subset $K\subset M\setminus{\rm Sing}(X)$ and $\varepsilon>0$ small enough, by Lemma \ref{lemma5.2}, we know that every maximal rescaled 1-$(t, \varepsilon/2, K)$-separating set is rescaled 1-$(t, \varepsilon, K)$-spanning. Thus we have $N_{1, t}^*(X, f, \varepsilon, K)\leq Z_{1, t}^*(X, f, \varepsilon, K)$, and then by definitions of $P_1^*(X, f)$ and $Q_1^*(X, f)$, we have $P_1^*(X, f)\leq Q_1^*(X, f)$. This proves that $P_2^*(X, f)\leq P_1^*(X, f)\leq Q_1^*(X,f)$.

For every compact subset $K\subset M\setminus{\rm Sing}(X)$, $\lambda\in(0,1)$, by Lemma \ref{lemma5.3} we know that if we choose $\varepsilon>0$  small enough and $t>0$ large enough, then every maximal rescaled 2-$(t,\varepsilon/2, K)$-separating set is rescaled 2-$((1-\lambda)t, \varepsilon, K)$-spanning. Thus, for $\varepsilon>0$ small enough and $t$ large enough, we have
\[
\begin{aligned}
N_{2, (1-\lambda)t}^*(X, f, \varepsilon, K) & =\inf\left\{\sum_{x \in E} e^{\int_0^{(1-\lambda)t} f(\phi_s (x)) {\rm d}s} : E \text{ is a rescaled 2-} (t,\varepsilon,K)\text{-spanning set} \right\}\\
& \leq\inf\left\{\sum_{x \in E} e^{\int_0^{(1-\lambda)t} f(\phi_s (x)) {\rm d}s} : E \text{ is a maximal rescaled  2-} (t,\varepsilon,K)\text{-separating set} \right\}\\
&\leq e ^{\lambda \|f\| t}\inf\left\{\sum_{x \in E} e^{\int_0^{t} f(\phi_s (x)) {\rm d}s} : E \text{ is a maximal rescaled  2-} (t,\varepsilon,K)\text{-separating set} \right\}\\
&\leq e^{\lambda \|f\|t}Z_{2, t}^*(X, f, \varepsilon, K).
\end{aligned}
\]
Then we have
\[
\frac{1}{t} \log Z_{2,t}^*(X, f, \varepsilon, K) \geq -\lambda \|f\| + \frac{1}{t} \log N_{2,(1-\lambda)t}^*(X, f, \varepsilon, K).
\]
By taking limits $t\to+\infty$ and $\varepsilon\to 0^+$, and letting $\lambda \to 0$, we have
\[
\lim_{\varepsilon \to 0} \limsup_{t \to \infty} \frac{1}{t} \log Z_{2,t}^*(X, f, \varepsilon, K) \geq \lim_{\varepsilon \to 0} \limsup_{t \to \infty} \frac{1}{t} \log N_{2,t}^*(X, f, \varepsilon, K).
\]
By taking the supremum over all compact subsets $K\subset M\setminus{\rm Sing}(X)$, we have $Q_2^*(X, f)\geq P_2^*(X, f)$. This proves that $P_2^*(X, f)\leq Q_2^*(X, f)\leq Q_1^*(X, f)$.
\end{proof}

\begin{Lemma}\label{lemma5.5}
Let $f\in C(M, \mathbb{R})$ be given. We have $Q_1^*(X,f)\leq P_2^*(X, f)$.
\end{Lemma}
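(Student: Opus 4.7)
The plan is to bound every weighted sum over a rescaled $1$-$(t, \varepsilon, K)$-separating set by a subexponential factor times a weighted sum over a rescaled $2$-$(t, \varepsilon', K)$-spanning set, then let the constants tend to their limits. The comparison between $B_1^*$ and $B_2^*$ balls given by Lemma \ref{lemma3.3}, the Lipschitz control $\|X(y)\| \in [\|X(x)\|/2,\, 2\|X(x)\|]$ when $d(x,y) < c\|X(x)\|$, and the bounded variation property (Proposition \ref{boundedvariation}) are the essential tools.

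Fix a compact $K \subset M \setminus \text{Sing}(X)$, a tolerance $\sigma > 0$, $\tau > \min\{\log 2/(4L), r_0\}$, and $a \in (0, c/64)$. By Lemma \ref{lemma3.3} there exists $\varepsilon_1 > 0$ such that for every $x \in M \setminus \text{Sing}(X)$ and $t > \tau$ there is a set $F_x \subseteq B_2^*(x, t, \varepsilon_1, X)$ with $\#F_x \leq 3^{\lfloor t/\tau \rfloor}$ and $B_2^*(x, t, \varepsilon_1, X) \subseteq \bigcup_{z \in F_x} B_1^*(z, t, 16a, X)$; shrinking $\varepsilon_1$ and enlarging $t$, Proposition \ref{boundedvariation} further guarantees $\bigl|\int_0^t (f(\phi_s(y)) - f(\phi_s(z))) \, {\rm d}s\bigr| < \sigma t$ for any $y, z$ in a common $B_2^*(x, t, \varepsilon_1, X)$. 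The geometric key is: if $E \subset K$ is rescaled $1$-$(t, 64a, K)$-separating, then each $B_1^*(z, t, 16a, X)$ meets $E$ in at most one point. Two points $x_1, x_2$ in such a ball satisfy $d(\phi_s(x_1), \phi_s(x_2)) < 32a\|X(\phi_s(z))\|$; since $16a < c$ the Lipschitz control gives $\|X(\phi_s(z))\| \leq 2\|X(\phi_s(x_1))\|$, so $x_1 \in B_1^*(x_2, t, 64a, X)$, which forces $x_1 = x_2$.

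Given any rescaled $2$-$(t, \varepsilon_1, K)$-spanning set $F'$, pick for each $x \in E$ some $y(x) \in F'$ with $x \in B_2^*(y(x), t, \varepsilon_1, X)$; the preceding two facts imply that every fiber of the map $x \mapsto y(x)$ has at most $3^{\lfloor t/\tau \rfloor}$ elements. Combined with bounded variation this gives
\[
\sum_{x \in E} e^{\int_0^t f(\phi_s(x)) \, {\rm d}s} \leq 3^{\lfloor t/\tau \rfloor} e^{\sigma t} \sum_{y \in F'} e^{\int_0^t f(\phi_s(y)) \, {\rm d}s},
\]
and taking $\inf_{F'}$ and $\sup_E$ yields $Z_{1,t}^*(X, f, 64a, K) \leq 3^{\lfloor t/\tau \rfloor} e^{\sigma t} N_{2,t}^*(X, f, \varepsilon_1, K)$. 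Applying $\frac{1}{t}\log(\cdot)$, then $\limsup_{t\to\infty}$, letting $a, \varepsilon_1 \to 0$, $\tau \to \infty$, $\sigma \to 0$, and finally $\sup$-ing over compact $K \subset M \setminus \text{Sing}(X)$, one obtains $Q_1^*(X, f) \leq P_2^*(X, f)$. The main subtlety is the simultaneous calibration of $a$, $\varepsilon_1$, and the separation scale $64a$ so that Lemma \ref{lemma3.3}, the Lipschitz comparison, and bounded variation all apply on the same balls; the rest is routine bookkeeping.
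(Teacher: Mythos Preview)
Your proof is correct and follows essentially the same approach as the paper's: both use Lemma \ref{lemma3.3} to cover each $B_2^*$-ball by at most $3^{\lfloor t/\tau\rfloor}$ many $B_1^*$-balls of radius $16a$, invoke the Lipschitz comparison $\|X(\cdot)\|$ to show any such $B_1^*$-ball meets a rescaled $1$-$(t,64a,K)$-separating set in at most one point, and combine this with Proposition \ref{boundedvariation} to control the weights. Your organization via the map $x\mapsto y(x)$ and its fiber count is a slightly cleaner packaging of what the paper does with the sets $E_x$, and it lets you apply bounded variation only once (yielding $e^{\sigma t}$ rather than the paper's $e^{2\sigma t}$), but the substance is identical.
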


\begin{proof}
Let \( \sigma > 0 \), \(a\in(0,c/16)\) and  \( \tau > \min\{\frac{\log 2}{4L}, r_0\} \) be some fixed constants. By Lemma \ref{lemma3.3}, there exists \( \varepsilon_1> 0 \), s.t. for any \( t > \tau \) and any \( x \in M \setminus \text{Sing}(X) \), there exists a finite subset \( F_x \subseteq B_2^*(x, t, \varepsilon_1, X) \) with \( \#F_x \leq 3^{\left\lfloor \frac{t}{\tau} \right\rfloor} \) such that
\[
B_2^*(x, t, \varepsilon_1, X) \subseteq \bigcup_{y \in F_x} B_1^*(y, t, 16a, X).
\]
By Proposition \ref{boundedvariation}, we can choose \( \varepsilon_2> 0 \) sufficiently small and \( T>0\) large enough such that for any \( t > T \),  any \( x \in M \setminus \text{Sing}(X) \) and any \( y, z \in B_2^*(x, t, \varepsilon_2, X) \), we have
\[
\frac{1}{t}\left| \int_0^t f(\phi_s(y)) \, {\rm d}s - \int_0^t f(\phi_s(z)) \, {\rm d}s \right| < \sigma.
\]
Then, for any \( y \in B_2^*(x, t, \varepsilon, X) \), we have \[ \left| \int_0^t f(\phi_s(y)) \, {\rm d}s - \int_0^t f(\phi_s(x)) \, {\rm d}s \right| < \sigma t. \]
In the following, we always let $0<\varepsilon<\min \{\varepsilon_1, \varepsilon_2\}$ and $t>\max\{\tau, T\}$.

Given a compact \( K \subset M\setminus{\rm Sing}(X) \), let \( E \) be a rescaled \( 1\text{-}(t, 64a, K) \)-separating set and $F$ be a rescaled 2-$(t, \varepsilon, K)$-spanning set. By applying Lemma \ref{lemma3.3}, we can choose $F_x\subset B_2^*(x, t, \varepsilon, X)$ with $\#F_x \leq 3^{\left\lfloor \frac{t}{\tau} \right\rfloor}$ such that
$$B_2^*(x, t, \varepsilon, X)\subset \bigcup_{y\in F_x}B_1^*(x, t, 16a, X)$$
for any $x\in F$. Denote by $G=\bigcup\limits_{x\in F} F_x$. It is easy to see that
\[K \subset \bigcup_{x \in F} B_2^*(x, t, \varepsilon, X) \subset \bigcup_{x \in F} \bigcup_{y \in F_x} B_1^*(y, t, 16a, X) = \bigcup_{y \in G} B_1^*(y, t, 16a, X).
\]

\bigskip
\noindent\textbf{Claim. } {\it For any $y\in G$, there exists at most one point $q\in E$ such that $q\in B_1^*(y, t, 16a, X)$.}

\begin{proof} Suppose that there exist \( p \neq q \in E \) such that \( p, q \in B_1^*(y, t, 16a, X) \). Since \( p,q \in B_1^*(y, t, 16a, X) \), we have
\[
\begin{aligned}
d(\phi_s(y), \phi_s(p)) &< 16a \| X(\phi_s(y)) \| , \forall s \in [0, t], \\
d(\phi_s(y), \phi_s(q)) &< 16a \| X(\phi_s(y)) \| , \forall s \in [0, t] .
\end{aligned}
\]
Since \( 0<16a<c \), \( d(\phi_s(y), \phi_s(p)) < 16a \| X(\phi_s(y)) \|<c\|X(\phi_s(y))\| \), we have
\(
\| X(\phi_s(y)) \| \leq 2 \| X(\phi_s(p)) \|.\)
It follows that
\begin{align*}
d(\phi_s(y), \phi_s(p)) &< 16a \| X(\phi_s(y)) \| \leq 32a \| X(\phi_s(p)) \|, \\
d(\phi_s(y), \phi_s(q)) &< 16a \| X(\phi_s(y)) \| \leq 32a \| X(\phi_s(p)) \|.
\end{align*}

\noindent
Then
\[
\begin{aligned}
d(\phi_s(p), \phi_s(q)) &\leq d(\phi_s(p), \phi_s(y)) + d(\phi_s(y), \phi_s(q)) < 64a \| X(\phi_s(p)) \|.
\end{aligned}
\]
which contradicts that \(E\) is a rescaled \( 1\text{-}(t, 64a, K) \)-separating set. Thus, for each \( y \in G \), \( B_1^*(y, t, 16a, X) \) contains at most one point in \( E \).  The claim is proved.
\end{proof}

For any $x\in F$, denote by
$$E_x=\left\{q\in E :  \exists y\in F_x, s.t. \, q\in B_1^*(y, t, 16a, X)\right\}.$$
By Claim we know that $\#E_x\leq \#F_x \leq 3^{\left\lfloor \frac{t}{\tau} \right\rfloor}$ for any $x\in F$. By the fact that
$$E\subset \bigcup_{y\in G}B_1^*(y, t, 16a, X),$$
we know that $E\subset\bigcup\limits_{x\in F}E_x$.

Now we assume that  \(a>0\) is chosen small enough such that for any $y\in M\setminus {\rm Sing}(X)$ and any \( q \in B_1^*(y, t, 16a, X) \), one has
\[
\left| \int_0^t f(\phi_s(q)) ds - \int_0^t f(\phi_s(y)) ds \right| < \sigma t.
\]
For any $x\in F$ and any $q\in E_x$, assuming that $y\in F_x\subset B_2^*(x, t, \varepsilon, X)$ is chosen with $q\in B_1^*(y, t, 16a, X)$, by the choice of $0<\varepsilon<\varepsilon_2$, we have
\[
\frac{1}{t} \left| \int_0^t f(\phi_s(y)) ds - \int_0^t f(\phi_s(x)) ds \right| < \sigma,
\]
and then \[ \frac{1}{t}\left| \int_0^t f(\phi_s(q)) \, {\rm d}s - \int_0^t f(\phi_s(x)) \, {\rm d}s \right| < 2\sigma. \]

Now we have
\[
\begin{aligned}
\sum_{q \in E} e^{\int_0^t f(\phi_s (q)) \, {\rm d}s}  &= \sum_{x \in F} \sum_{q \in E_x} e^{\int_0^t f(\phi_s (q)) \, {\rm d}s}
\leq \sum_{x \in F} \sum_{q \in E_x} e^{2\sigma t} e^{\int_0^t f(\phi_s (x)) \, {\rm d}s}
\\&= e^{2\sigma t} \sum_{x \in F} \#E_x \cdot e^{\int_0^t f(\phi_s (x)) \, {\rm d}s }
\\ &\leq e^{2\sigma t} \cdot 3^{\frac{t}{\tau}} \sum_{x \in F} e^{\int_0^t f(\phi_s (x)) \, {\rm d}s}.
\end{aligned}
\]

\noindent
Take the supremum over all \( 1\text{-}(t, 64a, K) \)-separating set \(E\) and the infimum over all 2-$(t, \varepsilon, K)$ spanning set $F$, we have
\[
Z_{1,t}^*(X, f, 64a, K) \leq e^{2\sigma t} \cdot 3^{\frac{t}{\tau}} \cdot N_{2,t}^*(X, f, \varepsilon, K).
\]

\noindent
Thus
\[
\limsup_{t \to \infty} \frac{1}{t} \log Z_{1,t}^*(X, f, 64a, K) \leq \limsup_{t \to \infty} \frac{1}{t} \log N_{2,t}^*(X, f, \varepsilon, K) + 2\sigma + \frac{\log 3}{\tau}.
\]

\noindent
Let \( a \to 0 \) and \( \varepsilon \to 0 \), then
\[
\lim_{\varepsilon \to 0} \limsup_{t \to \infty} \frac{1}{t} \log Z_{1,t}^*(X, f, \varepsilon, K) \leq \lim_{\varepsilon \to 0}\limsup_{t \to \infty} \frac{1}{t} \log N_{2,t}^*(X, f, \varepsilon, K) + 2\sigma + \frac{\log 3}{\tau}.
\]

\noindent
By the arbitrariness of \( \tau \) and \( \sigma \), let \( \tau \to +\infty \) and \( \sigma \to 0 \), we have
\[
\lim_{\varepsilon \to 0} \limsup_{t \to \infty} \frac{1}{t} \log Z_{1,t}^*(X, f, \varepsilon, K) \leq \lim_{\varepsilon \to 0}\limsup_{t \to \infty} \frac{1}{t} \log N_{2,t}^*(X, f, \varepsilon, K).
\]
Taking the supremum over the compact subsets \( K \subset  M\setminus{\rm Sing}(X)\), we have
\[
\sup_K\lim_{\varepsilon \to 0} \limsup_{t \to \infty} \frac{1}{t} \log Z_{1,t}^*(X, f, \varepsilon, K) \leq\sup_K \lim_{\varepsilon \to 0} \limsup_{t \to \infty} \frac{1}{t} \log N_{2,t}^*(X, f, \varepsilon, K),
\]
and $Q_1^*(X, f)\leq P_2^*(X, f)$. The lemma is proved.
\end{proof}

By combining Lemma \ref{lemma5.1}, Lemma \ref{lemma5.4} and Lemma \ref{lemma5.5} we obtain Theorem \ref{theorem 1.3}. And Theorem \ref{theorem1.4:vp} is a direct corollary of the following lemma.

\begin{Lemma}
Let $\mu$ be a Borel probability measure with $\mu({\rm Sing}(X))=0$ and $f\in C(M, \mathbb{R})$. We have $P_\mu^*(X, f)\leq P_1^*(X, f)$.
\end{Lemma}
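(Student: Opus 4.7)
The plan is to dominate the measure-theoretic count $N_1^\mu(\delta, t, \varepsilon, X, f)$ by the topological count $N_{1,t}^*(X, f, \varepsilon, K)$ on a suitably chosen compact set $K \subset M \setminus \Sing(X)$. The key observation I will use is that whenever $K$ has $\mu$-measure greater than $1-\delta$, every finite rescaled $1$-$(t,\varepsilon,K)$-spanning set automatically lies in the class $\mathcal{F}_1(t,\varepsilon)$ appearing in the definition of $N_1^\mu$.

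First I fix $\delta \in (0,1)$ and exploit the hypothesis $\mu(\Sing(X)) = 0$ together with inner regularity of Borel probability measures on the compact metric space $M$: since $M \setminus \Sing(X)$ is an open set of full $\mu$-measure, there exists a compact subset $K_\delta \subset M \setminus \Sing(X)$ with $\mu(K_\delta) > 1-\delta$. Then, for every $t>0$ and $\varepsilon>0$, any finite rescaled $1$-$(t,\varepsilon,K_\delta)$-spanning set $F$ satisfies $F \subset K_\delta \subset M \setminus \Sing(X)$ and
\[
\mu\!\left(\bigcup_{x\in F} B_1^*(x,t,\varepsilon,X)\right) \;\geq\; \mu(K_\delta) \;>\; 1-\delta,
\]
so that $F \in \mathcal{F}_1(t,\varepsilon)$. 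Taking the infimum of $\sum_{x\in F} e^{\int_0^t f(\phi_s(x))\,{\rm d}s}$ over such spanning sets therefore dominates the infimum defining $N_1^\mu(\delta,t,\varepsilon,X,f)$, which yields the pointwise bound
\[
N_1^\mu(\delta, t, \varepsilon, X, f) \;\leq\; N_{1,t}^*(X, f, \varepsilon, K_\delta).
\]

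Next I divide by $t$, apply $\limsup_{t\to\infty}$, then $\lim_{\varepsilon\to 0}$, and finally $\lim_{\delta\to 0}$. Since $P_1^*(X,f)$ is defined as a supremum over all compact subsets of $M \setminus \Sing(X)$, of which $K_\delta$ is a particular one, the chain of inequalities
\[
\lim_{\varepsilon\to 0}\limsup_{t\to\infty}\frac{1}{t}\log N_1^\mu(\delta, t, \varepsilon, X, f) \;\leq\; \lim_{\varepsilon\to 0} P_1^*(X, f, \varepsilon, K_\delta) \;\leq\; P_1^*(X,f)
\]
holds for every $\delta \in (0,1)$, and passing to $\delta \to 0$ produces the desired bound $P_\mu^*(X,f) \leq P_1^*(X,f)$.

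I do not anticipate any substantive obstacle. The argument is essentially a compact-approximation argument combined with the definitional inclusion of spanning sets into $\mathcal{F}_1(t,\varepsilon)$; in contrast to the comparisons between different $B_i^*$ carried out in Sections \ref{sec:pressure} and \ref{sec:tp}, no bounded variation estimate or reparametrization control is needed here, since $B_1^*$ is used on both sides of the inequality.
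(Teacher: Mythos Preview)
Your proposal is correct and follows essentially the same approach as the paper: both use inner regularity of $\mu$ to find a compact $K \subset M \setminus \Sing(X)$ with $\mu(K) > 1-\delta$, observe that every rescaled $1$-$(t,\varepsilon,K)$-spanning set lies in $\mathcal{F}_1(t,\varepsilon)$, deduce $N_1^\mu(\delta,t,\varepsilon,X,f) \leq N_{1,t}^*(X,f,\varepsilon,K)$, and pass to the appropriate limits. The only cosmetic difference is that the paper fixes an auxiliary $\sigma>0$ and chooses $\delta$ accordingly, whereas you work directly with $\delta$ and let $\delta\to 0$ at the end.
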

\begin{proof}
Fix $\sigma>0$. Then we can find $\delta>0$ such that
$$\lim_{\varepsilon\to 0}\limsup_{t\to\infty}\frac{1}{t}\log N_1^\mu(\delta, t, \varepsilon, X, f)>P_\mu^*(X, f)-\sigma.$$
Since \(\mu\) is regular and $\mu({\rm Sing(X)})=0$, there exists compact \(K\subset M\setminus{\rm Sing}(X)\) such that \(\mu(K)>1-\delta\). It is easy to see that
\[
\begin{aligned}
N_1^\mu(\delta, t, \varepsilon, X, f)
=&\inf\left\{\sum_{x\in F}e^{\int_0^t\phi_s(x){\rm d}x} : \mu\left(\bigcup_{x\in F}B_1^*(x, t, \varepsilon, X)\right)>1-\delta\right\} & \\ \leq&\inf\left\{\sum_{x\in F}e^{\int_0^t\phi_s(x){\rm d}x} : K\subset\bigcup_{x\in F}B_1^*(x, t, \varepsilon, X)\right\}
\\
=&N_{1, t}^*(X, f, \varepsilon, K).
\end{aligned}
\]

Then we have
$$P_\mu^*(X, f)-\sigma<\lim_{\varepsilon\to 0}\limsup_{t\to\infty}N_1^\mu(\delta, t, \varepsilon, X, f)\leq\lim_{\varepsilon\to 0}\limsup_{t\to\infty}N_{1, t}^*(X, f, \varepsilon, K)\leq P_1^*(X, f). $$
By letting $\sigma\to 0$, we get $P_\mu^*(X, f)\leq P_1^*(X, f)$.
\end{proof}

\section*{Appendix
 A. The metric pressure for fixed point free flow}
In the appendix, we improve Theorem 1.2 of  \cite{WangChen}. Here we consider a compact metric space $M$ and a continuous flow $\phi_t$ on $M$.   Inspired by the definition of entropy given by Thomas \cite{Thomas_1987} and Sun-Vargas \cite{Sun1999315}, Wang-Chen \cite{WangChen} used reparametrization Bowen balls defined as follows to define metric pressure. For any $x\in M$, $\varepsilon>0$ and $t>0$, let
$$\tilde B(x, t, \varepsilon)=\left\{y\in M : \exists \alpha\in {\rm Rep}[0,t], s.t. d(\phi_{\alpha(s)}(x), \phi_s(y))<\varepsilon, \forall\, 0\leq s\leq t\right.\}.$$
be {\it reparametrization Bowen ball} centered at $x$.
For any continuous function \( f \in C(X, \mathbb{R}) \), denote by
\[
\gamma_t(f, \varepsilon) = \sup \left\{ \left| \int_0^t \left( f(\phi_s (y)) - f(\phi_s (z)) \right) {\rm d}s \right|: y, z \in \tilde{B}(x, t, \varepsilon) \text{ for some } x \in M \right\}.
\]
If $f\in C(M, \mathbb{R})$ satisfies
\[
\lim_{\varepsilon \to 0} \limsup_{t \to \infty} \frac{\gamma_t(f, \varepsilon)}{t} = 0,
\]
then we say $f$ has {\it bounded variation}. In \cite{WangChen}, the authors prove that if $\phi_t$ has no fixed points and $f$ has bounded variation, then the following Katok's formula holds
$$\lim_{\varepsilon\to 0}\limsup_{t\to\infty}\frac{1}{t}\log\tilde{N}^\mu(\delta, t, \varepsilon, f)=h(\phi_1)+\int f{\rm d}\mu,$$
where $\delta\in(0,1)$ and
\[\tilde N^\mu(\delta, t, \varepsilon, f)=\inf_{}\left\{\sum_{x\in F}e^{\int_0^tf(\phi_s(x)){\rm d}s} \,:\, F \text{ is finite with }\mu\left(\bigcup_{x\in F}\tilde B(x, t, \varepsilon)\right)>1-\delta\right\}.\]
and the variation principle between topological pressure and metric pressure defined through reparametrization Bowen ball holds.

In the appendix, we show that the bounded variation of \(f\) is inherent to the function itself.

\bigskip
\noindent {\bf Proposition A.1. } {\it If $\phi_t$ has no fixed points, then every $f\in C(M, \mathbb{R})$ has bounded variation.}

\bigskip

Before proving the proposition, we collect the following result of Thomas \cite{Thomas_1987}.

\bigskip

\noindent\textbf{Lemma A.2. } {\it Let $\phi_t$ be a continuous flow on $M$ without fixed points. Given any \( \eta, r > 0 \), there exists \( \varepsilon_0 > 0 \) small enough such that for any \( x, y \in X \) and any \( \alpha \in \text{Rep}[0, t] \) (with \( t \geq r \)), if
\[
d(\phi_{\alpha(s)}(x), \phi_s(y)) < \varepsilon_0, \quad \forall \,0 \leq s \leq t,
\]
then
\[
|\alpha(s) - s| <
\begin{cases}
\eta s, &  r\leq s \leq t; \\
\eta r, &  s \leq r.
\end{cases}
\]
}
\bigskip
\noindent\textbf{Proof of Proposition A. 1. }
We prove that:  \(\forall \sigma > 0 \),  \( \exists\varepsilon_0 > 0 \) and \( T > 0 \) such that
$$\frac{1}{t} \left| \int_0^t \left( f(\phi_s (y)) - f(\phi_s (z)) \right) {\rm d}s \right| < \sigma$$
holds for any \( x \in M\), \( 0 < \varepsilon \leq \varepsilon_0 \), \( t > T \), and \( y, z \in \tilde{B}(x, t, \varepsilon) \).

Denote \( \|f\| = \sup\limits_{x \in M} |f(x)| \). Fix \( \sigma > 0 \), by the compactness of \( M\), we can take \( \eta > 0 \) such that if \( x, y \in M \) satisfy \( d(x, y) < \eta \), then \( |f(x) - f(y)| < \frac{\sigma}{8(\|f\| + 1)} \). Then we can take \( r > 0 \) such that for any \( x \in M \) and any \( y, z \in \phi_{[-2r, 2r]}(x) \), we have \( d(y, z) < \eta \). By Lemma A.2, there is \( 0 < \varepsilon_0 < \eta \) such that for any \( x, y \in M\) and any \( \alpha \in \text{Rep}[0, t] \) (with \( t \geq r \)), if
\[
d(\phi_{\alpha(s)}(x), \phi_s(y)) < \varepsilon_0, \quad \forall \,0 \leq s \leq t,
\]
then
\[
|\alpha(s) - s| <
\begin{cases}
\frac{\sigma}{8(\|f\| + 1)} s, & r\leq s \leq t ; \\
r, &  s \leq r.
\end{cases}
\]

Now we take \( 0 < \varepsilon \leq \varepsilon_0 \), \( t > r \) and \( y, z \in \tilde{B}(x, t, \varepsilon) \) for some \( x \in X \). Let \( n = \left\lfloor \frac{t}{r} \right\rfloor \). It is easy to see that
\[
\left| \int_0^t f(\phi_s (y)) {\rm d}s - \int_0^t f(\phi_s (z)) {\rm d}s - \left( \int_0^{nr} f(\phi_s (y)) {\rm d}s - \int_0^{nr} f(\phi_s (z)) {\rm d}s \right) \right| \leq 2 \|f\| r.
\]
In the following we give estimations on
\[ \left| \int_0^{nr} f(\phi_s (y)) {\rm d}s - \int_0^{nr} f(\phi_s (x)) {\rm d}s \right| \text{\ \ \ \ \ and\ \ \ \ } \left| \int_0^{nr} \varphi(\phi_s (y)) {\rm d}s - \int_0^{nr} \varphi(\phi_s (z)) {\rm d}s \right|  .\]  Firstly, we have
\[
\begin{aligned}
&\left| \int_0^{nr} f(\phi_s(y)) {\rm d}s - \int_0^{nr} f(\phi_{\alpha(s)}(x)) {\rm d}s \right| \leq \int_0^{nr} \left| f(\phi_s(y)) - f(\phi_{\alpha(s)}(x)) \right| {\rm d}s \\
<& \int_0^{nr} \frac{\sigma}{8(\|f\| + 1)} {\rm d}s \leq \frac{\sigma nr}{8(\|f\| + 1)} \leq \frac{\sigma t}{8(\|f\| + 1)}.
\end{aligned}
\]

\noindent
It is easy to see that
\[
\begin{aligned}
&\left| \int_0^{nr} f(\phi_{\alpha(s)}(x)) {\rm d}s - \int_0^{nr} f(\phi_s(x)) {\rm d}s \right| \\
\leq& \left| \int_0^{nr} f(\phi_{\alpha(s)}(x)) {\rm d}s - \int_0^{\alpha(nr)} f(\phi_s(x)) {\rm d}s \right| + \left| \int_0^{\alpha(nr)} f(\phi_s(x)) {\rm d}s - \int_0^{nr} f(\phi_s(x)) {\rm d}s \right| \\
\leq &\left| \int_0^{nr} f(\phi_{\alpha(s)}(x)) {\rm d}s - \int_0^{\alpha(nr)} f(\phi_s(x)) {\rm d}s \right| + \|f\| \cdot |\alpha(nr) - nr| \\
<& \left| \int_0^{nr} f(\phi_{\alpha(s)}(x)) {\rm d}s - \int_0^{\alpha(nr)} f(\phi_s(x)) {\rm d}s \right| + \frac{\sigma nr \|f\|}{8(\|f\| + 1)} \quad (\text{by Thomas's result}) \\
<& \left| \int_0^{nr} f(\phi_{\alpha(s)}(x)) {\rm d}s - \int_0^{\alpha(nr)} f(\phi_s(x)) {\rm d}s \right| + \frac{\sigma}{8} t.
\end{aligned}
\]

\noindent
Note that
\[
\begin{aligned}
&\left| \int_0^{nr} f(\phi_{\alpha(s)}(x)) {\rm d}s - \int_0^{\alpha(nr)} f(\phi_s(x)) {\rm d}s \right| \\
=& \left| \sum_{k=1}^n \int_{(k-1)r}^{kr} f(\phi_{\alpha(s)}(x)) {\rm d}s - \sum_{k=1}^n \int_{\alpha((k-1)r)}^{\alpha(kr)} f(\phi_s(x)) {\rm d}s \right| \\
=& \left| \sum_{k=1}^n f(\phi_{\alpha(s_k)}(x)) \cdot r - \sum_{k=1}^n f(\phi_{\alpha(s_k')}(x)) \cdot (\alpha(kr) - \alpha((k-1)r)) \right|,
\end{aligned}
\]
where \( s_k, s_k' \in [(k-1)r, kr] \).

Let arbitrary $1\leq k\leq n$ be given. Let \( \bar{x} = \phi_{\alpha((k - 1)r)}(x) \), \( \bar{y} = \phi_{(k - 1)r}(y) \),
\[
\bar{\alpha}(t) = \alpha(t + (k - 1)r) - \alpha((k - 1)r).
\]
then \( \bar{\alpha} \in \text{Rep}[0, t - (k - 1)r] \) and one can check that
\[
d(\phi_{\bar{\alpha}(s)}(\bar{x}), \phi_s(\bar{y}))
= d\left( \phi_{\alpha(s + (k - 1)r)}(x), \phi_{s + (k - 1)r}(y) \right) < \varepsilon \leq \varepsilon_0
\]
for any \( 0 \leq s \leq t - (k - 1)r \). By the choice of \( \varepsilon_0 \), we have
\[
\begin{aligned}
&|\alpha(s_k) - \alpha(s_k')|
=|\bar{\alpha}(s_k - (k - 1)r) - \bar{\alpha}(s_k' - (k - 1)r)| \\
\leq& |\bar{\alpha}(s_k - (k - 1)r) - (s_k - (k - 1)r)| + |s_k - s_k'| + |\bar{\alpha}(s_k' - (k - 1)r) - (s_k' - (k - 1)r)| \\
<& r + r + r < 4r.
\end{aligned}
\]
Note that
\[
|\bar{\alpha}(s) - s| < \begin{cases} \frac{\sigma}{8(\|f\| + 1)} s, & r\leq s \leq t; \\ r, & s \leq r. \end{cases}
\]
We also have
\[
|\alpha(kr) - \alpha((k - 1)r) - r| = |\bar{\alpha}(r) - r| < \frac{\sigma}{8(\|f\| + 1)} r.
\]

Note that $|\alpha(s_k)-\alpha(s_k')|< 4r$, by the choice of \( r \), we have \( d(\phi_{\alpha(s_k)}(x), \phi_{\alpha(s_k')}(x) ) < \eta \), and then
\[
\left| f( \phi_{\alpha(s_k)}(x) ) - f( \phi_{\alpha(s_k')}(x) ) \right| < \frac{\sigma}{8(\|f\| + 1)}, \quad \forall 1 \leq k \leq n.
\]
Hence, we have
\[
\begin{aligned}
&\left| \int_0^{nr} f( \phi_{\alpha(s)}(x) ) {\rm d}s - \int_0^{\alpha(nr)} f( \phi_s(x) ) {\rm d}s \right| \\
=& \left| \sum_{k=1}^n f( \phi_{\alpha(s_k)}(x) ) r - \sum_{k=1}^nf( \phi_{\alpha(s_k')}(x) ) \left( \alpha(kr) - \alpha((k - 1)r) \right) \right| \\
\leq &\left| \sum_{k=1}^n \left( f( \phi_{\alpha(s_k)}(x) ) - f( \phi_{\alpha(s_k')}(x) ) \right) r \right| +  \left| \sum_{k=1}^n \left(f( \phi_{\alpha(s_k')}(x) ) \left( \alpha(kr) - \alpha((k - 1)r) - r \right) \right)\right| \\
<& \frac{\sigma}{8(\|f\| + 1)} nr + \frac{\|f\| \sigma}{8(\|f\| + 1)} nr = \frac{\sigma}{8}nr \leq  \frac{\sigma}{8} t.
\end{aligned}
\]

\noindent
And then we have
\[
\begin{aligned}
&\left| \int_0^{nr} f( \phi_{\alpha(s)}(x) ) {\rm d}s - \int_0^{nr} f( \phi_s(x) ) {\rm d}s \right| \\
\leq& \left| \int_0^{nr} f( \phi_{\alpha(s)}(x) ) {\rm d}s - \int_0^{\alpha(nr)} f( \phi_s(x) ) {\rm d}s \right| + \left| \int_0^{\alpha(nr)} f\left( \phi_s(x) \right) {\rm d}s - \int_0^{nr} f\left( \phi_s(x) \right) {\rm d}s \right| \\
<& \frac{\sigma}{8} t + \frac{\sigma}{8} t = \frac{\sigma}{4} t.
\end{aligned}
\]

\noindent
Now we get
\[
\begin{aligned}
&\left| \int_0^{nr} f( \phi_s (y) ) {\rm d}s - \int_0^{nr} f( \phi_s(x) ) {\rm d}s \right|\\
\leq& \left| \int_0^{nr} f( \phi_s (y) ) {\rm d}s - \int_0^{nr} f( \phi_{\alpha(s)}(x) ) {\rm d}s \right| + \left| \int_0^{nr} f\left( \phi_{\alpha(s)}(x) \right) {\rm d}s - \int_0^{nr} f\left( \phi_s (x) \right) {\rm d}s \right| \\
<& \frac{\sigma}{8(\|f\| + 1)} t + \frac{\sigma}{4} t < \frac{3\sigma}{8} t.
\end{aligned}
\]

\noindent
Similarly, we have
\[
\left| \int_0^{nr} f( \phi_s (z) ) {\rm d}s - \int_0^{nr} f( \phi_s (x) ) {\rm d}s \right| < \frac{3}{8}\sigma t,
\]
then we have
\[
\left| \int_0^{nr} f( \phi_s (y) ) {\rm d}s - \int_0^{nr} f( \phi_s (z) ) {\rm d}s \right| < \frac{3}{4} \sigma t,
\]

\noindent
and then
\[
\left| \int_0^t f( \phi_s (y) ) {\rm d}s - \int_0^t f( \phi_s (z) ) {\rm d}s \right| < \frac{3}{4}\sigma t + 2 \|f\| r.
\]

\noindent
Choose \( T > r \) such that \( \frac{\sigma}{4} T \geq 2 \|f\| r \). Then we can see that if \( t > T \), then
\[
\left| \int_0^t f( \phi_s (y) ) {\rm d}s - \int_0^t f( \phi_s (z) ) {\rm d}s \right| < \frac{3}{4} \sigma t + 2 \|f\| r < \sigma t ,\quad 0 < \varepsilon \leq \varepsilon_0.
\]
This shows that if \( y, z \in \tilde{B}(x, t, \varepsilon) \) for some \( x \in M\) and \( t > T \), then
\[
\frac{1}{t} \left| \int_0^t f( \phi_s (y) ) {\rm d}s - \int_0^t f( \phi_s (z) ) {\rm d}s \right| < \sigma.
\]
The Lemma A.2 is proved. \hfill \qedsymbol

\bigskip

\bigskip
\noindent{\bf Lemma A.3. } {\it Let $\mu$ be a Borel probability measure on $M$ and $f\in C(M, \mathbb{R})$ and $\delta\in(0,1)$. We have
$$\lim_{\varepsilon \to 0} \limsup_{t \to \infty} \frac{1}{t} \log N^\mu(\delta, t, \varepsilon, f) = \lim_{\varepsilon \to 0} \limsup_{t \to \infty} \frac{1}{t} \log \tilde{N}^\mu(\delta, t, \varepsilon, f),$$
$$\lim_{\varepsilon \to 0} \liminf_{t \to \infty} \frac{1}{t} \log N^\mu(\delta, t, \varepsilon, f) = \lim_{\varepsilon \to 0} \liminf_{t \to \infty} \frac{1}{t} \log \tilde{N}^\mu(\delta, t, \varepsilon, f).$$}

\begin{proof}
It is easy to see that $B(x, t, \varepsilon)\subset \tilde{B}(x, t, \varepsilon)$ for any $x\in M$ and $t>0$ and $\varepsilon>0$. Then we can get that $N^\mu(\delta, t, \varepsilon, f)\geq \tilde{N}^\mu(\delta, t, \varepsilon, f)$. The we have
\[
\lim_{\varepsilon \to 0} \limsup_{t \to \infty} \frac{1}{t} \log N^\mu(\delta, t, \varepsilon, f) \geq \lim_{\varepsilon \to 0} \limsup_{t \to \infty} \frac{1}{t} \log \tilde{N}^\mu(\delta, t, \varepsilon, f),
\]
\[
\lim_{\varepsilon \to 0} \liminf_{t \to \infty} \frac{1}{t} \log N^\mu(\delta, t, \varepsilon, f) \geq \lim_{\varepsilon \to 0} \liminf_{t \to \infty} \frac{1}{t} \log \tilde{N}^\mu(\delta, t, \varepsilon, f).
\]

Now we fix an arbitrary $\sigma>0$ and $\tau>0$. It has been proved in \cite{Thomas_1987} that for any $a>0$, there is $\varepsilon>0$ such that for any $x\in M$, there is a subset $F_x\subset \tilde{B}(x, t, \varepsilon)$ with $\# F_x<3^{t/\tau+1}$ such that $\tilde{B}(x, t, \varepsilon)\subset \bigcup\limits_{y\in F_x}B(y, t, a)$. By Proposition A.1, we can choose $\varepsilon>0$ small enough with a constant $T>0$ such that for any $t>T$, $x\in M$ and any $y\in \tilde{B}(x, t, \varepsilon)$, one has
$$\frac{1}{t}\left|\int_0^tf(\phi_s(x)){\rm d}s-\int_0^tf(\phi_s(y)){\rm d}s\right|<\sigma.$$
Similar to Lemma \ref{lemma3.4}, we can get that
$$N^\mu(\delta, t, a, f)\leq 3^{t/\tau+1}e^{\sigma t}\tilde{N}^\mu(\delta, t, \varepsilon, f)$$
for all $t>\max\{T, \tau\}$. Then we have
$$\lim_{a\to 0}\limsup_{t\to\infty}\frac{1}{t}\log N^\mu(\delta, t, a, f)\leq \lim_{\varepsilon\to 0}\limsup_{t\to\infty}\frac{1}{t}\tilde{N}^\mu(\delta, t, \varepsilon, f)+\frac{\log 3}{\tau}+\sigma,$$
$$\lim_{a\to 0}\liminf_{t\to\infty}\frac{1}{t}\log N^\mu(\delta, t, a, f)\leq \lim_{\varepsilon\to 0}\liminf_{t\to\infty}\frac{1}{t}\tilde{N}^\mu(\delta, t, a, f)+\frac{\log 3}{\tau}+\sigma.$$
By the arbitrariness of $\tau$ and $\sigma$ we have
$$\lim_{a\to 0}\limsup_{t\to\infty}\frac{1}{t}\log N^\mu(\delta, t, a, f)\leq \lim_{\varepsilon\to 0}\limsup_{t\to\infty}\frac{1}{t}\tilde{N}^\mu(\delta, t, \varepsilon, f),$$
$$\lim_{a\to 0}\liminf_{t\to\infty}\frac{1}{t}\log N^\mu(\delta, t, a, f)\leq \lim_{\varepsilon\to 0}\liminf_{t\to\infty}\frac{1}{t}\tilde{N}^\mu(\delta, t, \varepsilon, f).$$
This finishes the proof of the lemma.
\end{proof}

\noindent
Then, by Lemma A.3 and Theorem 1 of \(\cite{he2004metrical}\), we have the following theorem:

\bigskip

\noindent
\textbf{Theorem A.4. }
{\it Let \(M\) be a compact metric space and $\phi_t$ be a fixed point free flow on $M$. For any $\phi$ ergodic invariant measure $\mu$ and any continuous function \(f \in C(X, \mathbb{R})\) and any $\delta\in(0,1)$, we have
\[
\lim_{\varepsilon \to 0} \limsup_{t \to \infty} \frac{1}{t} \log \tilde{N}^u(\delta, t, \varepsilon, f) = \lim_{\varepsilon \to 0} \liminf_{t \to \infty} \frac{1}{t} \log \tilde{N}^u(\delta, t, \varepsilon, f)=h_\mu(\phi_1) + \int f \, {\rm d}\mu.
\]
}

\end{document}